\def\SC{{\footnotesize SS-CMC} }
\def\sC{{\footnotesize S-CMC} }
\def\C{{\footnotesize CMC} }
\theoremstyle{plain}
\newtheorem{thm}{\textrm{Theorem}}
\newtheorem{prop}{\textrm{Proposition}}
\newtheorem{cor}{\textrm{Corollary}}
\theoremstyle{definition}
\theoremstyle{remark}
\newtheorem{rem}{\textrm{Remark}}
\title{Spacelike spherically symmetric CMC hypersurfaces in Schwarzschild spacetimes (I): Construction}
\author{Kuo-Wei Lee and Yng-Ing Lee}
\date{}
\begin{document}
\maketitle
\fontsize{12}{18pt plus.5pt minus.4pt}\selectfont
\section*{\centering \small Abstract}
\begin{center}
\begin{minipage}{133mm}
\small We solve spacelike spherically symmetric constant mean curvature (\SC\!)
hypersurfaces in Schwarzschild spacetimes and analyze their asymptotic behavior near the coordinate singularity $r=2M$.
Furthermore, we join \SC hypersurfaces in the Kruskal extension to obtain complete ones and
discuss the smooth properties.
\end{minipage}
\end{center}
\footnote{2010 {\it Mathematics Subject Classification:} Primary 83C15, Secondary 83C05.}

\section{Introduction}
The Schwarzschild spacetime is the simplest model of a universe containing a star.
Its metric is a solution of the vacuum Einstein equations,
and is spherically symmetric, asymptotically flat, and Ricci flat.
A more remarkable fact is that the Schwarzschild metric is the only spherically symmetric vacuum solution
of the Einstein equations.

Spacelike constant mean curvature (\sC\!\!) hypersurfaces in spacetimes
have been considered important and interesting objects
in studying the dynamics of spacetime and in general relativity.
We refer to \cite{MT} for more discussions on the importance of \sC hypersurfaces.
From the viewpoint of geometry,
a \sC hypersurface in spacetimes has extremal surface area with fixed enclosed volume \cite{BCI}.
This property is similar to that of a compact \C hypersurface in Euclidean spaces.

In this paper, we study spacelike spherically symmetric constant
mean curvature (\SC\!\!) hypersurfaces in Schwarzschild spacetimes
and Kruskal extension. We solve \SC hypersurfaces in both exterior
and interior of the Schwarzschild spacetime, and then analyze their
asymptotic behavior, especially at $r=2M$. The Kruskal extension is
an analytic extension of the Schwarzschild spacetime. When \SC
hypersurfaces are mapped to the Kruskal extension, we find relations
between \SC hypersurfaces in exterior and interior such that they
can be joined smoothly. These statements can be seen in
Theorem~\ref{thm1}--\ref{thm3}.
Furthermore, we get all complete \SC hypersurfaces in the Kruskal extension in Theorem~\ref{thmall}.

Our motivation on studying \SC hypersurfaces is on one hand that they are easier to deal with and have explicit
expressions, and on the other hand that these examples can serve as barrier functions
for the  general non-symmetric cases. We  hope that a deep understanding of these solutions can help
us to find right formulation of other general questions in the
Schwarzschild spacetime such as Dirichlet problem and etc.
After we finished the results in this paper,
we found that the problem was also studied by Brill, Cavallo, and Isenberg in \cite{BCI},
and Malec, and \'{O} Murchadha in \cite{MO,MO2}.
However, the approaches are quite different.
Our viewpoint is purely geometrical and the explicit formula derived in this paper has
the advantage on verifying foliation properties conjectured in \cite{MO}.
This part will appear in a forthcoming paper \cite{LL2}.

The authors want to thank Quo-Shin Chi, Mao-Pei Tsui, and Mu-Tao Wang for their interests and discussions.
The first author also like to express his gratitude to Robert Bartnik and Pengzi Miao for helpful suggestions
and hospitality when he visited  Monash University in 2010.
The second author is partially supported by the NSC research grant 99-2115-M-002-008 in Taiwan.
We are also grateful to Zhuo-Bin Liang for useful suggestions.

The organization of this paper is as follows. We first give a brief summary of the Schwarzschild spacetime and
the Kruskal extension in section~\ref{Kru}. A good reference for this part is Wald's book \cite{W}.
In sections~\ref{section2}--\ref{section4},
we study \SC hypersurfaces in each region and analyze their asymptotic behavior, especially at $r=2M$.
How to glue these solutions into complete and smooth \SC
hypersurfaces are discussed in section~\ref{section5}.

\section{The Kruskal extension}\label{Kru}
The Schwarzschild spacetime, denoted by $S$, has a metric
\begin{align}
\mbox{d}s^2=-\left(1-\frac{2M}r\right)\mbox{d}t^2+\frac1{\left(1-\frac{2M}r\right)}\mbox{d}r^2
+r^2\mbox{d}\theta^2+r^2\sin^2\theta\mbox{d}\phi^2. \label{scmc2eq1}
\end{align}
We often write $h(r)=1-\frac{2M}r$. The
metric (\ref{scmc2eq1}) is not defined at $r=0$ and $r=2M$, and looks singular at both places.
But in fact, the Schwarzschild spacetime is nonsingular at $r=2M$.
It is only a coordinate singularity, which is caused merely by a breakdown of the coordinates.
There is a larger spacetime including the Schwarzschild spacetime
as a proper subset and it has a smooth metric, especially for points corresponding to $r=2M$.
Such an analytic extension was obtained by Kruskal in 1960.

\begin{prop}{\rm\cite{K, W}} \label{prop1}
The Schwarzschild metric can be written as
\begin{align}
\mathrm{d}s^2=&\frac{16M^2\mathrm{e}^{-\frac{r}{2M}}}{r}(-\mathrm{d}T^2+\mathrm{d}X^2)
+r^2\mathrm{d}\theta^2+r^2\sin^2\theta\mathrm{d}\phi^2 \nonumber \\
=&\frac{16M^2\mathrm{e}^{-\frac{r}{2M}}}{r}\mathrm{d}U\mathrm{d}V
+r^2\mathrm{d}\theta^2+r^2\sin^2\theta\mathrm{d}\phi^2, \label{SchMetric2}
\end{align}
where
\begin{align}
\left\{
\begin{array}{l}
\displaystyle(r-2M)\mathrm{e}^{\frac{r}{2M}}=X^2-T^2=VU\\
\displaystyle\frac{t}{2M}=\ln\left|\frac{X+T}{X-T}\right|=\ln\left|\frac{V}{U}\right|. \label{trans}
\end{array}
\right.
\end{align}
The metric {\rm(\ref{SchMetric2})} is nonsingular at $r=2M$.
\end{prop}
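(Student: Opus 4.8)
The plan is to bring (\ref{scmc2eq1}) into double-null form on the exterior region $r>2M$ by a ``tortoise'' substitution, recognize (\ref{trans}) as the exponential rescaling of the two null coordinates that cancels the factor responsible for the apparent singularity, and then read off smoothness at $r=2M$ from the analyticity of the implicit relation defining $r$. First I would introduce the tortoise coordinate $r_{*}$ with $\mathrm{d}r_{*}=\mathrm{d}r/h(r)$, i.e. $r_{*}=r+2M\ln\!\left(\tfrac{r}{2M}-1\right)$ on $\{r>2M\}$, normalized precisely so that $\mathrm{e}^{r_{*}/2M}=\tfrac{1}{2M}(r-2M)\mathrm{e}^{r/2M}$. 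With the null coordinates $u=t-r_{*}$, $v=t+r_{*}$, a short computation turns (\ref{scmc2eq1}) into $\mathrm{d}s^{2}=-h(r)\,\mathrm{d}u\,\mathrm{d}v+r^{2}\mathrm{d}\theta^{2}+r^{2}\sin^{2}\theta\,\mathrm{d}\phi^{2}$.

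Next, on $\{r>2M\}$ I set $U=\sqrt{2M}\,\mathrm{e}^{-u/4M}$ and $V=\sqrt{2M}\,\mathrm{e}^{v/4M}$, and then $X=\tfrac12(V+U)$, $T=\tfrac12(V-U)$ (so $V=X+T$, $U=X-T$). Then $VU=2M\,\mathrm{e}^{(v-u)/4M}=2M\,\mathrm{e}^{r_{*}/2M}=(r-2M)\mathrm{e}^{r/2M}=X^{2}-T^{2}$ and $|V/U|=\mathrm{e}^{(u+v)/4M}=\mathrm{e}^{t/2M}$, which is exactly (\ref{trans}); the composite map $(t,r)\mapsto(T,X)$ is a diffeomorphism of region I onto $\{X>|T|\}$. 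From $\mathrm{d}u=-\tfrac{4M}{U}\mathrm{d}U$ and $\mathrm{d}v=\tfrac{4M}{V}\mathrm{d}V$ one gets $\mathrm{d}u\,\mathrm{d}v=-\tfrac{16M^{2}}{UV}\mathrm{d}U\,\mathrm{d}V$, while $h(r)/UV=\mathrm{e}^{-r/2M}/r$; substituting and using $\mathrm{d}U\,\mathrm{d}V=-\mathrm{d}T^{2}+\mathrm{d}X^{2}$ yields (\ref{SchMetric2}).

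To see nonsingularity at $r=2M$, put $f(r)=(r-2M)\mathrm{e}^{r/2M}$, so $f'(r)=\tfrac{r}{2M}\mathrm{e}^{r/2M}>0$ for $r>0$; hence $f$ is a real-analytic diffeomorphism of $(0,\infty)$ onto $(-2M,\infty)$ with $f(2M)=0$. The Kruskal extension is then the manifold $\{(T,X):X^{2}-T^{2}>-2M\}\times S^{2}$ on which $r:=f^{-1}(X^{2}-T^{2})$ is a real-analytic, strictly positive function of $(T,X)$, in particular smooth across the horizon locus $X^{2}-T^{2}=0$ where $r=2M$. Therefore the conformal factor $16M^{2}\mathrm{e}^{-r/2M}/r$ and the area radius $r^{2}$ are smooth and bounded away from $0$ near $r=2M$, so (\ref{SchMetric2}) defines there a smooth (indeed analytic) Lorentzian metric, of which the original chart (\ref{scmc2eq1}) on $\{r>2M\}$ is an isometric copy; this exhibits $r=2M$ as a removable coordinate singularity.

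The change of variables is routine; the point that really needs care is the last step --- verifying that $r$ extends to an honest smooth (here analytic) function of the Kruskal coordinates at the horizon and that the conformal factor neither blows up nor degenerates there, which is precisely what upgrades the extension from continuous to smooth. The only other bookkeeping is the sign of $(U,V)$ in the four quadrants of the extension, but since this affects neither the metric formula nor its regularity it can be handled by working in region I and extending by analyticity.
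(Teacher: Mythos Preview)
Your argument is correct and is essentially the standard textbook derivation (tortoise coordinate $\to$ double-null $(u,v)$ $\to$ exponential rescaling $(U,V)$ $\to$ inverse function theorem for $r$). The paper itself does not supply a proof of this proposition; it simply cites \cite{K,W}, so there is no in-paper argument to compare against. Your normalization $r_{*}=r+2M\ln(\tfrac{r}{2M}-1)$ together with the factors $\sqrt{2M}$ in $U,V$ reproduces exactly the paper's $U=\mathrm{e}^{-u/4M}$, $V=\mathrm{e}^{v/4M}$ with $u=t-(r+2M\ln|r-2M|)$, so the coordinates match the ones used later in the paper.
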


A spacetime diagram for the Kruskal extension is shown in Figure \ref{KruskalSimple}.
Each point in the Kruskal plane represents a sphere.
There is one-to-one and onto correspondence from the region {\tt I} to the Schwarzschild exterior
$r>2M$, and from the region {\tt I\!I}  to the Schwarzschild interior $0<r<2M$.
The whole Kruskal extension is the union of regions {\tt I, I\!I, I'}, and {\tt I\!I'},
where regions {\tt I'} and {\tt I\!I'} are exterior and interior of another Schwarzschild spacetime, respectively.

\begin{figure}[!h]
\psfrag{A}{\tt I}
\psfrag{B}{\tt I\!I}
\psfrag{C}{\tt I\!I'}
\psfrag{D}{\tt I'}
\psfrag{E}{$X+T=0$}
\psfrag{F}{$X^2-T^2=-2M$}
\psfrag{G}{$X-T=0$}
\psfrag{X}{$X$}
\psfrag{T}{$T$}
\centering
\includegraphics[height=82mm,width=62mm]{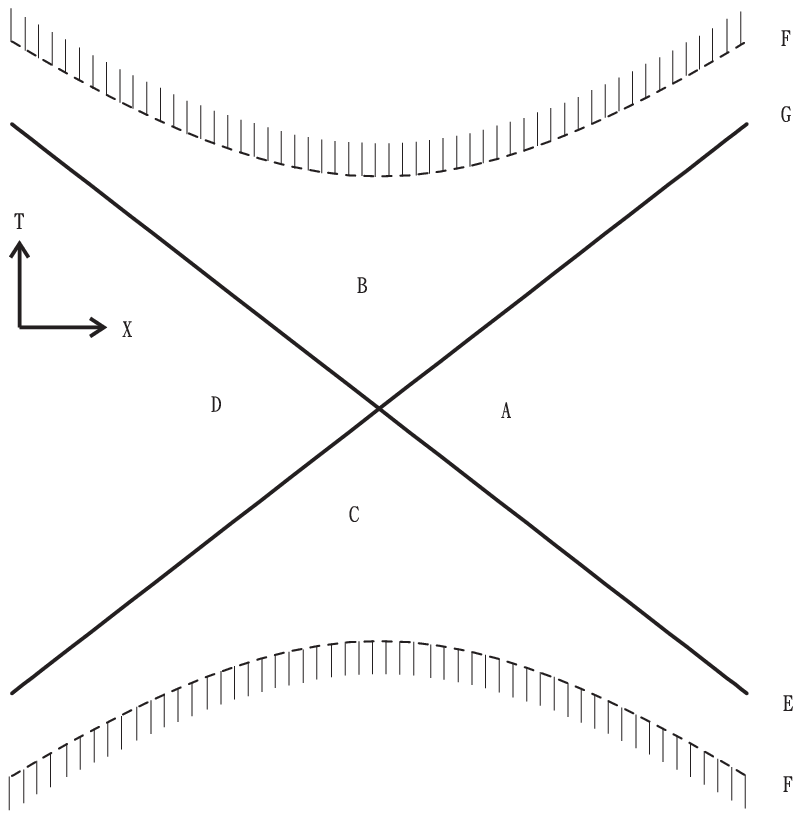}
\caption{The Kruskal extension of a Schwarzschild spacetime.}\label{KruskalSimple}
\end{figure}

From (\ref{trans}), we know that each $r=\mbox{constant}$
in the Schwarzschild spacetime is a hyperbola in the Kruskal extension,
and each $t=\mbox{constant}$ in the Schwarzschild spacetime is two half-lines starting from the origin
in the Kruskal extension.
Images of $r=\mbox{constant}$ and $t=\mbox{constant}$ under the correspondence
are illustrated in Figure~\ref{Kruskal}.

\begin{figure}[!h]
\psfrag{A}{\tt I}
\psfrag{B}{\tt I\!I}
\psfrag{C}{\tt I\!I'}
\psfrag{D}{\tt I'}
\psfrag{X}{$X$}
\psfrag{T}{$T$}
\psfrag{a}{$t=0$}
\psfrag{b}{$r=\mbox{constant}<2M$}
\psfrag{c}{$r=2M, t=\infty$}
\psfrag{d}{$r=0$}
\psfrag{e}{$t=t_0>0$}
\psfrag{f}{$r=2M, t=-\infty$}
\psfrag{g}{$r=0$}
\psfrag{h}{$r=\mbox{constant}>2M$}
\centering
\includegraphics[height=82mm,width=62mm]{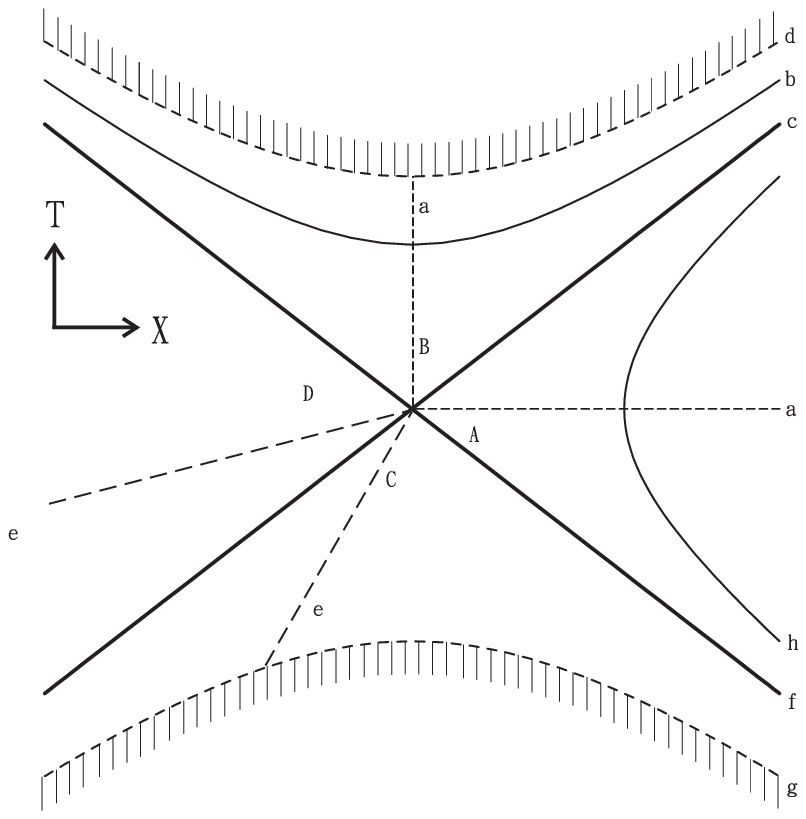}
\caption{Level sets $r=\mbox{constant}$ and $t=\mbox{constant}$.} \label{Kruskal}
\end{figure}

\begin{figure}[!h]
\psfrag{A}{\tt I}
\psfrag{B}{\tt I\!I}
\psfrag{C}{\tt I\!I'}
\psfrag{D}{\tt I'}
\psfrag{L}{$L_{+}$}
\psfrag{M}{$L_{-}$}
\psfrag{P}{$r=2M,-\infty<t<\infty$}
\psfrag{Q}{$L_{+}: r=2M,t=\infty$}
\psfrag{R}{$L_{-}: r=2M,t=-\infty$}
\psfrag{Z}{glued}
\psfrag{F}{$r=0$}
\hspace*{-3mm}
\includegraphics[height=80mm,width=130mm]{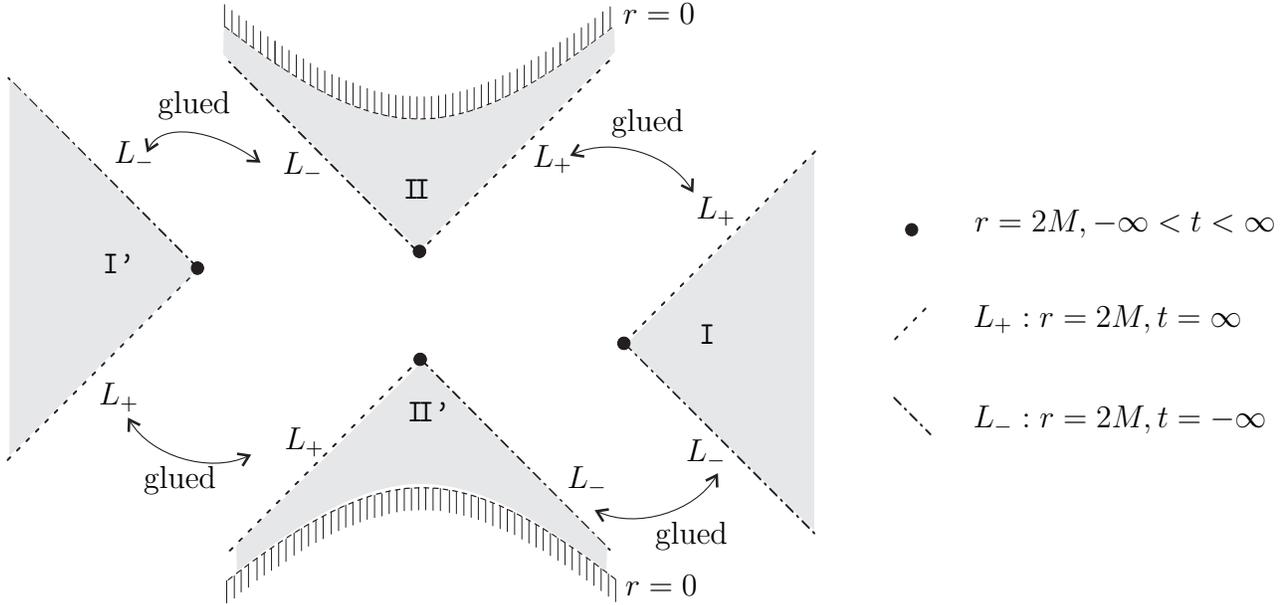}
\caption{The gluing of Schwarzschild exteriors and interiors.}
 \label{topology}
\end{figure}

\begin{figure}[h]
\psfrag{a}{$\scriptstyle u=-2$}
\psfrag{b}{$\scriptstyle u=-1$}
\psfrag{c}{$\scriptstyle u=0$}
\psfrag{d}{$\scriptstyle u=1$}
\psfrag{e}{$\scriptstyle u=2$}
\psfrag{f}{$\scriptstyle v=-2$}
\psfrag{g}{$\scriptstyle v=-1$}
\psfrag{h}{$\scriptstyle v=0$}
\psfrag{i}{$\scriptstyle v=1$}
\psfrag{j}{$\scriptstyle v=2$}
\psfrag{A}{$\scriptstyle v=-2$}
\psfrag{B}{$\scriptstyle v=-1$}
\psfrag{C}{$\scriptstyle v=0$}
\psfrag{D}{$\scriptstyle v=1$}
\psfrag{E}{$\scriptstyle v=2$}
\psfrag{F}{$\scriptstyle u=2$}
\psfrag{G}{$\scriptstyle u=1$}
\psfrag{H}{$\scriptstyle u=0$}
\psfrag{I}{$\scriptstyle u=-1$}
\psfrag{J}{$\scriptstyle u=-2$}
\psfrag{r}{$r$}
\psfrag{t}{$t$}
\psfrag{X}{$X$}
\psfrag{T}{$T$}
\hspace*{-3mm}
\includegraphics[height=82mm,width=82mm]{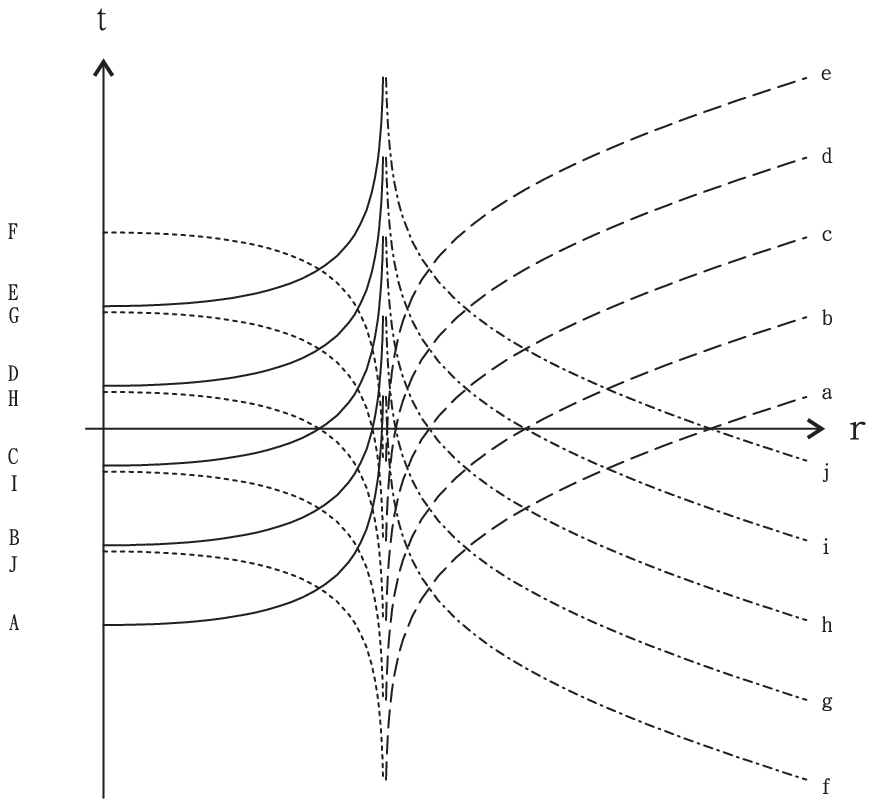}
\hspace{5mm}
\includegraphics{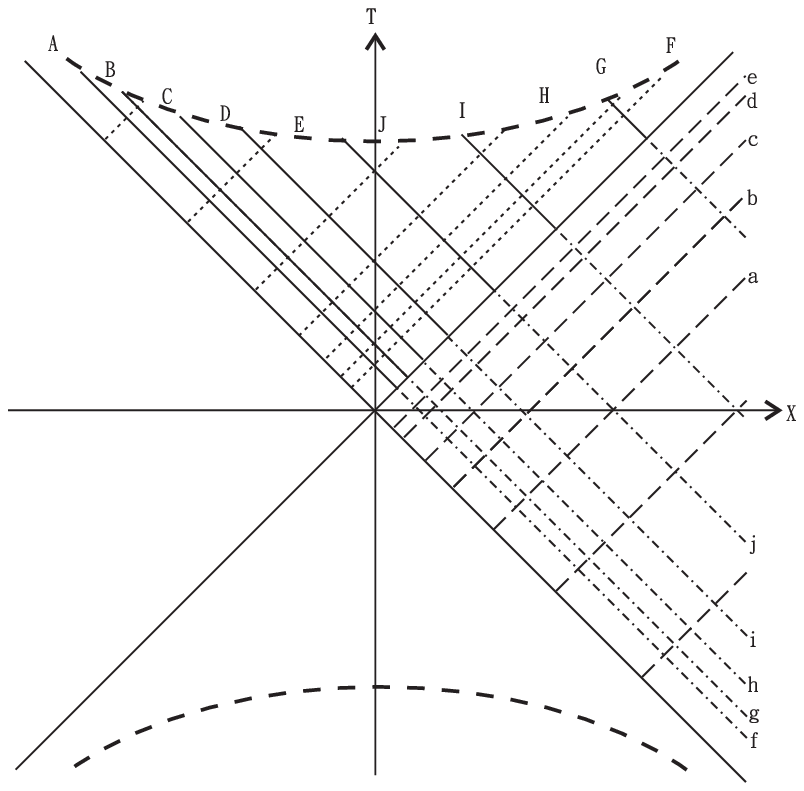}
\caption{Null geodesics in the Schwarzschild spacetime and Kruskal extension.} \label{nullgeo}
\end{figure}

Now we explain how the Schwarzschild exterior and interior change as they map into the Kruskal extension.
The boundary $r=2M,\; -\infty<t<\infty$
of the Schwarzschild exterior and interior blow down to the origin in the Kruskal extension.
On the other hand, $r=2M,\; t=\infty$ and $r=2M,\; t=-\infty$ blow up to half-lines $L_+$ and $L_{-}$
in the Kruskal extension, respectively.
The $L_+$ of {\tt I} is glued to the $L_+$ of {\tt I\!I},
and the $L_-$ of {\tt I\!I} is glued to the $L_{-}$ of {\tt I'}, and so on.
Moreover, $r=0$ is mapped to the hyperbola $X^2-T^2=-2M$ in the Kruskal extension.
This identification is pictured in Figure \ref{topology}.

The idea to the construction of the Kruskal extension is using null geodesics.
When omitting the spherically symmetric part and solving null geodesics in $t$-$r$ plane,
we can define null coordinates $u,v$ by
\begin{align*}
u=t-(r+2M\ln|r-2M|)\quad\mbox{and}\quad v=t+(r+2M\ln|r-2M|).
\end{align*}
These coordinate curves are mapped to $\pm 45^\circ$ straight lines in the Kruskal extension.
Figure~\ref{nullgeo} presents $u=\mbox{constant}$ and $v=\mbox{constant}$
in the Schwarzschild spacetime and Kruskal extension.
Furthermore, we can define null coordinates $(U,V)$ in the Kruskal extension by
\begin{align*}
\begin{array}{ccccc}
& \quad \mbox{Region {\tt I}} & \quad\mbox{Region {\tt I\!I}} & \quad \mbox{Region {\tt I'}}
& \quad \mbox{Region {\tt I\!I'}}\\
\hline
U & \quad \mbox{e}^{-\frac{u}{4M}} & \quad -\mbox{e}^{-\frac{u}{4M}}
& \quad -\mbox{e}^{-\frac{u}{4M}} & \quad \mbox{e}^{-\frac{u}{4M}} \\
V &\quad \mbox{e}^{\frac{v}{4M}}  & \quad \mbox{e}^{\frac{v}{4M}}
&\quad -\mbox{e}^{\frac{v}{4M}}  & \quad-\mbox{e}^{\frac{v}{4M}}. \\
\end{array}
\end{align*}
Direct computation from (\ref{trans}) gives the relations between $(X,T)$ and $(r,t)$ as follows:\\
\begin{tabular}{llll}
In region {\tt I}, &
$\quad\displaystyle X=\frac{\sqrt{r-2M}(\mathrm{e}^{\frac{r+t}{4M}}+\mathrm{e}^{\frac{r-t}{4M}})}{2}\quad$ &
and & $\quad\displaystyle T=\frac{\sqrt{r-2M}(\mathrm{e}^{\frac{r+t}{4M}}-\mathrm{e}^{\frac{r-t}{4M}})}{2}$.
\vspace*{5mm} \\
In region {\tt I\!I}, &
$\quad\displaystyle X=\frac{\sqrt{2M-r}(\mathrm{e}^{\frac{r+t}{4M}}-\mathrm{e}^{\frac{r-t}{4M}})}{2}\quad$ &
and & $\quad\displaystyle T=\frac{\sqrt{2M-r}(\mathrm{e}^{\frac{r+t}{4M}}+\mathrm{e}^{\frac{r-t}{4M}})}{2}$.
\vspace*{5mm} \\
In region {\tt I'}, &
$\quad\displaystyle X=-\frac{\sqrt{r-2M}(\mathrm{e}^{\frac{r+t}{4M}}+\mathrm{e}^{\frac{r-t}{4M}})}{2}\quad$ &
and & $\quad\displaystyle T=-\frac{\sqrt{r-2M}(\mathrm{e}^{\frac{r+t}{4M}}-\mathrm{e}^{\frac{r-t}{4M}})}{2}$.
\vspace*{5mm} \\
In region {\tt I\!I'}, &
$\quad\displaystyle X=-\frac{\sqrt{2M-r}(\mathrm{e}^{\frac{r+t}{4M}}-\mathrm{e}^{\frac{r-t}{4M}})}{2}\quad$ &
and & $\quad\displaystyle T=-\frac{\sqrt{2M-r}(\mathrm{e}^{\frac{r+t}{4M}}+\mathrm{e}^{\frac{r-t}{4M}})}{2}$.
\vspace*{5mm}
\end{tabular}

In this article, we always take $\partial_T$ as future directed timelike vector field.
In region {\tt I}, the vector $\partial_T$ points to the direction of increasing $t$,
while in region {\tt I\!I} it  points to the direction of decreasing $r$.
On the other hand,
$\partial_T$ points to the direction of decreasing $t$
in region {\tt I'} and points to the direction of increasing $r$ in region {\tt I\!I'}.

\section{SS-CMC solutions in region {\tt I}} \label{section2}
A vector $v$ is {\it spacelike} if $\langle v,v\rangle>0$, {\it null} if $\langle v,v\rangle=0$,
and {\it timelike} if $\langle v,v\rangle<0$.
Given a smooth function $F$ on the Schwarzschild spacetime $(S,\mathrm{d}s^2)$
with $\mathrm{d}s^2$ as in (\ref{scmc2eq1}), denote a level set of $F$ by $\Sigma=\{x\in S\,|\,F(x)=\mbox{constant}\}$,
then $\nabla F$ is a normal vector field of $\Sigma$.
If $\Sigma$ is spacelike, that is, $\Sigma$ has a positive definite metric induced from $(S,\mbox{d}s^2)$,
then $\nabla F$ forms a timelike normal vector field on $\Sigma$.
Since
\begin{align*}
\nabla F
&=g^{tt}F_t\partial_t+g^{rr}F_r\partial_r
+g^{\theta\theta}F_\theta\partial_\theta+g^{\phi\phi}F_\phi\partial_\phi \\
&=-\frac1{h(r)}F_t\partial_t+h(r)F_r\partial_r
+\frac1{r^2}F_\theta\partial_\theta+\frac1{r^2\sin^2\theta}F_\phi\partial_\phi,
\end{align*}
the spacelike condition on $\Sigma$ is equivalent to
\begin{align}
\langle\nabla F,\nabla F\rangle<0
\Leftrightarrow
-\frac1{h(r)}F_t^2+h(r)F_r^2+\frac1{r^2}F_\theta^2+\frac1{r^2\sin^2\theta}F_\phi^2<0.
\label{spacelikecond}
\end{align}

When $\Sigma$ is a level set of $F$ and is spacelike,
we can without loss of generality assume that $\nabla F$
is future directed (or replace $F$ by $-F$).
That is,
\begin{align*}
N=\frac{\nabla F}{\sqrt{-\langle\nabla F,\nabla F\rangle}}
\end{align*}
is future directed unit timelike normal vector field on $\Sigma$.

Let $\{e_i\}_{i=1}^3$ be a basis on $\Sigma$, then mean curvature of $\Sigma$ is
\begin{align*}
H=\frac13\sum_{i=1}^3g^{ij}h_{ij}
=\frac13\sum_{i=1}^3g^{ij}\langle \nabla_{e_i}N,e_j\rangle
=\frac13\sum_{i=1}^3\frac{g^{ij}}{\sqrt{-\langle\nabla F,\nabla F\rangle}}\langle \nabla_{e_i}(\nabla F),e_j\rangle.
\end{align*}

\subsection{SS-CMC solutions in region {\tt I}}
We start to study \SC solutions in the Schwarzschild exterior which maps to the region {\tt I} in the Kruskal extension.
\begin{prop} \label{scmc3prop1}
Suppose $\Sigma^{1}=(f_{1}(r),r,\theta,\phi)$ is a \SC hypersurface in the Schwarzschild exterior.
Then the mean curvature equation is
\begin{align*}
f_{1}''+\left(\left(\frac1{h}-(f_{1}')^2h\right)\left(\frac{2h}{r}+\frac{h'}2\right)
+\frac{h'}{h}\right)f_{1}'-3H\left(\frac1{h}-(f_{1}')^2h\right)^{\frac32}=0,
\end{align*}
where $h(r)=1-\frac{2M}{r}$ and $H$ is the mean curvature.
The explicit expression of $f_{1}'$ can be derived as
\begin{align*}
f_{1}'(r;H,c_{1})=\frac{l_1(r;H,c_{1})}{h(r)}\sqrt{\frac1{1+l_1^2(r;H,c_{1})}},\quad
\mbox{where}\quad l_1(r;H,c_{1})=\frac{1}{\sqrt{h(r)}}\left(Hr+\frac{c_1}{r^2}\right)
\end{align*}
for some constant $c_1$, and the integration gives
\begin{align}
f_{1}(r;H,c_{1},\bar{c}_1)
=\int_{r_1}^r\frac{l_1(r;H,c_{1})}{h(r)}\sqrt{\frac1{1+l_1^2(r;H,c_{1})}}\,\mathrm{d}r+\bar{c}_1,
\label{solnregionI}
\end{align}
where $\bar{c}_1$ is a constant and $r_1\in(2M,\infty)$ is fixed.
\end{prop}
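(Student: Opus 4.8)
\medskip
\noindent\emph{Proof outline.}\; The plan is to realize $\Sigma^{1}$ as a level set, read off its future directed unit normal, compute $H$ from the formula recalled above to obtain the stated second-order equation, and then observe that this equation is, after one multiplication, a total $r$-derivative; one integration produces the explicit first-order equation for $f_{1}'$, and a second integration gives (\ref{solnregionI}).

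First I would put $F(t,r)=f_{1}(r)-t$ on the Schwarzschild exterior, so that $\Sigma^{1}=\{F=0\}$; the point of letting $F$ depend only on $(t,r)$ is that every level set $\{F=\mathrm{const}\}$ is an \SC hypersurface with the \emph{same} mean curvature, since these level sets are $\partial_{t}$-translates of one another and $\partial_{t}$ is a Killing field of (\ref{scmc2eq1}). From $F_{t}=-1$, $F_{r}=f_{1}'$, $F_{\theta}=F_{\phi}=0$ and the expression for $\nabla F$ in the text one gets $\nabla F=\tfrac1h\partial_{t}+hf_{1}'\partial_{r}$; the spacelike condition (\ref{spacelikecond}) reads $\tfrac1h-h(f_{1}')^{2}>0$, equivalently $(f_{1}')^{2}<h^{-2}$, and then $N=\nabla F/\sqrt{\tfrac1h-h(f_{1}')^{2}}$ has positive $\partial_{t}$-component, hence is the future directed unit timelike normal.

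Next I would compute $3H=\sum_{i}g^{ij}h_{ij}=\operatorname{div}_{S}N$ restricted to $\Sigma^{1}$. Since $\sqrt{|g|}=r^{2}\sin\theta$ and $N$ has only $t$- and $r$-components with $N^{t}$ independent of $t$, the divergence formula collapses to
\[
3H=\frac{1}{r^{2}}\frac{\mathrm{d}}{\mathrm{d}r}\!\left(\frac{r^{2}hf_{1}'}{\sqrt{\tfrac1h-h(f_{1}')^{2}}}\right).
\]
Expanding the derivative, using $h'=\tfrac{2M}{r^{2}}$, collecting the $f_{1}''$ terms via the identity $h\bigl(\tfrac1h-h(f_{1}')^{2}\bigr)+h^{2}(f_{1}')^{2}=1$, and rewriting the remaining first-order terms with $h(f_{1}')^{2}=\tfrac1h-\bigl(\tfrac1h-h(f_{1}')^{2}\bigr)$, reproduces exactly the stated mean curvature equation. (Equivalently, one may form $h_{ij}=\langle\nabla_{e_{i}}N,e_{j}\rangle$ directly from the Schwarzschild Christoffel symbols with the tangent frame $e_{1}=\partial_{r}+f_{1}'\partial_{t}$, $e_{2}=\partial_{\theta}$, $e_{3}=\partial_{\phi}$ and contract against the induced metric; the divergence shortcut avoids most of this bookkeeping.) I expect this derivation of the second-order ODE to be the only genuine computation in the proof.

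Finally, the left-hand side of the mean curvature equation multiplied by $r^{2}\bigl(\tfrac1h-h(f_{1}')^{2}\bigr)^{-3/2}$ is precisely $\dfrac{\mathrm{d}}{\mathrm{d}r}\Bigl(r^{2}hf_{1}'\bigl(\tfrac1h-h(f_{1}')^{2}\bigr)^{-1/2}\Bigr)$, so one integration gives $\dfrac{r^{2}hf_{1}'}{\sqrt{\tfrac1h-h(f_{1}')^{2}}}=Hr^{3}+c_{1}$ for a constant $c_{1}$. Writing $\tfrac1h-h(f_{1}')^{2}=\tfrac1h\bigl(1-h^{2}(f_{1}')^{2}\bigr)$ and setting $l_{1}=\tfrac1{\sqrt h}\bigl(Hr+\tfrac{c_{1}}{r^{2}}\bigr)$, this becomes $\dfrac{hf_{1}'}{\sqrt{1-h^{2}(f_{1}')^{2}}}=l_{1}$; squaring yields $h^{2}(f_{1}')^{2}=\dfrac{l_{1}^{2}}{1+l_{1}^{2}}$, hence $\sqrt{1-h^{2}(f_{1}')^{2}}=(1+l_{1}^{2})^{-1/2}$ and, taking the branch of the root so that $f_{1}'$ and $l_{1}$ have the same sign (forced by the unsquared equation, since $h>0$), $f_{1}'=\dfrac{l_{1}}{h}\sqrt{\dfrac1{1+l_{1}^{2}}}$. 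Note that this identity also shows $1-h^{2}(f_{1}')^{2}=(1+l_{1}^{2})^{-1}>0$, so the spacelike condition is automatic for every $H$ and $c_{1}$ on $(2M,\infty)$. Integrating once more from the fixed $r_{1}\in(2M,\infty)$ and adding the constant $\bar c_{1}$ gives (\ref{solnregionI}), which completes the proof.
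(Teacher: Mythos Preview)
Your proof is correct, but it differs from the paper's in two respects.  First, the paper computes the second fundamental form components $h_{11},h_{22},h_{33}$ explicitly with respect to an orthonormal frame $e_{1},e_{2},e_{3}$ on $\Sigma^{1}$ and sums them to obtain the second-order ODE, whereas you use the spacetime divergence $3H=\operatorname{div}_{S}N$ directly; both are valid, and you correctly note they are equivalent.  Second, and more substantively, the paper solves the ODE by the trigonometric substitution $\sin\eta(r)=f_{1}'(r)h(r)$, which converts the equation into the first-order linear ODE $(\tan\eta)'+\bigl(\tfrac{2}{r}+\tfrac{h'}{2h}\bigr)\tan\eta=3H h^{-1/2}$ with integrating factor $r^{2}\sqrt{h}$; you instead observe that the divergence form $3Hr^{2}=\tfrac{\mathrm d}{\mathrm dr}\bigl(r^{2}hf_{1}'Q^{-1/2}\bigr)$ is \emph{already} a total derivative, so one integration gives the first integral immediately.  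Your route is shorter and makes the origin of the conserved quantity $Hr^{3}+c_{1}$ transparent; the paper's route, by going through the individual $h_{ij}$, yields as a by-product the formulae $h_{11}=h_{22}=H+c_{1}/r^{3}$, $h_{33}=H-2c_{1}/r^{3}$ that are used in the subsequent remark on umbilical slices.
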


\begin{proof}
Take $F(t,r,\theta,\phi)=-t+f_{1}(r)$ and  $\Sigma^1$ becomes a level set of $F$.
In addition, $\nabla F=\frac1{h(r)}\partial_t+f_{1}'(r)h(r)\partial_r$
is future directed because it points to the direction of increasing $t$.
The spacelike condition (\ref{spacelikecond}) is equivalent to
\begin{align}
-\frac1{h(r)}+(f_{1}'(r))^2h(r)<0 \Leftrightarrow |f_{1}'(r)h(r)|<1. \label{spacelikeExterior}
\end{align}
Thus the future directed unit timelike normal vector can be expressed as
\begin{align}
e_4=\frac{\left(\frac1{h(r)},h(r)f_{1}'(r),0,0\right)}{\sqrt{\frac1{h(r)}-(f_{1}'(r))^2h(r)}}. \label{normalext}
\end{align}
There is a canonical orthonormal frame on $\Sigma^1$
\begin{align}
e_1=\frac{(0,0,1,0)}{r},\quad
e_2=\frac{(0,0,0,1)}{r\sin\theta},\quad\mbox{and}\quad
e_3=\frac{(f_{1}'(r),1,0,0)}{\sqrt{\frac1{h(r)}-(f_{1}'(r))^2h(r)}}. \label{spacelikeext}
\end{align}
The second fundamental form of $\Sigma^1$ can be calculated directly, and we have
\begin{align*}
&h_{11}=\frac1{\left(\frac1h-(f_{1}')^2h\right)^{\frac12}}\frac{hf_{1}'}{r},\quad
h_{22}=\frac1{\left(\frac1h-(f_{1}')^2h\right)^{\frac12}}\frac{hf_{1}'}{r},\\
&h_{33}=\frac1{\left(\frac1h-(f_{1}')^2h\right)^{\frac12}}
\left(\frac{1}{\frac1h-(f_{1}')^2h}\left(f_{1}''+\frac{h'f_{1}'}{h}\right)+\frac{h'f_{1}'}2\right),
\end{align*}
and $h_{ij}=0$ for $i\neq j$.
Hence the mean curvature equation becomes
\begin{align}
f_{1}''+\left(\left(\frac1{h}-(f_{1}')^2h\right)\left(\frac{2h}{r}+\frac{h'}2\right)
+\frac{h'}{h}\right)f_{1}'-3H\left(\frac1{h}-(f_{1}')^2h\right)^{\frac32}=0,
\label{cmceq1}
\end{align}
which is a second order ordinary differential equation.

To solve $f_{1}(r)$, we define $\sin(\eta(r))=f_{1}'(r)h(r)$.
The spacelike condition (\ref{spacelikeExterior}) implies that the change of variable is meaningful,
and we can choose the range of $\eta$ in $\left(-\frac\pi2,\frac\pi2\right)$.
Equation (\ref{cmceq1}) becomes
\begin{align*}
(\tan\eta)'+\left(\frac{2}r+\frac{h'}{2h}\right)\tan\eta-3H\left(\frac{1}{h^{\frac12}}\right)=0
\Rightarrow \tan\eta=\frac1{\sqrt{h(r)}}\left(Hr+\frac{c_{1}}{r^2}\right),
\end{align*}
where $c_{1}$ is a constant.
We write $l_{1}(r;H,c_{1})=\frac1{\sqrt{h(r)}}\left(Hr+\frac{c_{1}}{r^2}\right)=\tan\eta$ for convenience.
On the other hand, since $\sin\eta=f_{1}'h$, it gives $\tan\eta=\frac{f_{1}'h}{\sqrt{1-(f_{1}'h)^2}}$.
Therefore,
\begin{align*}
\frac{f_{1}'h}{\sqrt{1-(f_{1}'h)^2}}=&l_{1}
\Rightarrow f_{1}'=\frac{l_{1}}h\sqrt{\frac{1}{1+l_{1}^2}}\quad\mbox{and} \\
f_{1}(r;H,c_{1},\bar{c}_1)
=&\int_{r_1}^r\frac{l_1(r;H,c_{1})}{h(r)}\sqrt{\frac1{1+l_1^2(r;H,c_{1})}}\mathrm{d}r+\bar{c}_1,
\end{align*}
where $\bar{c}_1$ is a constant and $r_1\in(2M,\infty)$ is a fixed number.
\end{proof}
Here are some remarks on the \SC solutions in (\ref{solnregionI}).

\begin{rem}
We can choose $r_1$ satisfying $r_1+2M\ln|r_1-2M|=0$.
\end{rem}

\begin{rem}
The sign of $l_{1}(r)$ is the same as the sign of $f_{1}'(r)$,
and the condition for $l_{1}(r)\gtreqqless 0$ is equivalent to
$Hr^3+c_{1} \gtreqqless 0$. So $f_{1}'(r)$ changes sign at most once.
More explicitly, we have
\begin{itemize}
\item[(a)] If $H>0$ and $c_{1}\geq-8M^3H$, then $f_{1}(r)$ is increasing on $r>2M$.
\item[(b)] If $H>0$ and $c_{1}<-8M^3H$, then $f_{1}(r)$ is decreasing on
$\left(2M,\left(\frac{-c_{1}}{H}\right)^{\frac13}\right)$,
and increasing on $\left(\left(\frac{-c_{1}}{H}\right)^{\frac13},\infty\right)$.
Function $f_{1}(r)$ has a unique minimum at $r=\left(\frac{-c_{1}}{H}\right)^{\frac13}$.
\item[(c)] If $H<0$ and $c_{1}\leq-8M^3H$, then $f_{1}(r)$ is decreasing on $r>2M$.
\item[(d)] If $H<0$ and $c_{1}>-8M^3H$, then $f_{1}(r)$ is increasing on
$\left(2M,\left(\frac{-c_{1}}{H}\right)^{\frac13}\right)$,
and decreasing on $\left(\left(\frac{-c_{1}}{H}\right)^{\frac13},\infty\right)$.
Function $f_{1}(r)$ has a unique maximum at $r=\left(\frac{-c_{1}}{H}\right)^{\frac13}$.
\end{itemize}
\end{rem}

\begin{rem}
The second fundamental form of $\Sigma^{1}$  with basis (\ref{spacelikeext}) satisfies
$$h_{11}=h_{22}=H+\frac{c_{1}}{r^3},\quad h_{33}=H-\frac{2c_{1}}{r^3}, \quad
\mbox{and}\quad h_{11},h_{22},h_{33}\to H \;\mbox{ as }\; r\to\infty.$$
In particular, if $c_{1}=0$, then $h_{11}=h_{22}=h_{33}=H$.
We call this hypersurface
{\it umbilical slice}.
\end{rem}

\begin{rem} The graphs of
$f_{1}(r;H,c_1,\bar{c}_1)$ for $\bar{c}_1\in\mathbb{R}$ gives a foliation in the Schwarzschild exterior.
\end{rem}

\subsection{Asymptotic behavior of SS-CMC solutions in region {\tt I}}
We analyze the asymptotic behavior of \SC solutions $f_1(r)$ in this subsection. Here we omit the dependency of
$f_1$  on $H,\,c_1,\,\bar{c}_1$ when there is no confusion.
\begin{prop}
For a \SC hypersurface $\Sigma^{1}=(f_{1}(r),r,\theta,\phi)$,
we have $\lim\limits_{r\to\infty}f_{1}'(r)=1$ if $H>0$;
$\lim\limits_{r\to\infty}f_{1}'(r)=-1$ if $H<0$,
and $\lim\limits_{r\to\infty}f_{1}'(r)=0$ if $H=0$.
Furthermore, $\Sigma^{1}$ is asymptotically null for $H\neq 0$ as $r\to\infty$,
and $\Sigma^{1}$ is asymptotically to some constant slice $(t=t_0,r,\theta,\phi)$
for $H=0$ as $r\to\infty$.
\end{prop}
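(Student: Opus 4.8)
The plan is to extract the asymptotics of $f_1'(r)$ directly from the explicit formula $f_1'(r) = \frac{l_1}{h}\sqrt{\frac{1}{1+l_1^2}}$ together with the definition $l_1(r) = \frac{1}{\sqrt{h(r)}}\left(Hr + \frac{c_1}{r^2}\right)$. First I would record that $h(r) = 1 - \frac{2M}{r} \to 1$ as $r \to \infty$, so the behavior of $l_1$ is governed by $Hr + \frac{c_1}{r^2}$. If $H \neq 0$, then $l_1(r) \to \mathrm{sgn}(H)\cdot\infty$, hence $\frac{1}{1+l_1^2} \sim \frac{1}{l_1^2}$ and $\sqrt{\frac{1}{1+l_1^2}} \sim \frac{1}{|l_1|}$, so $\frac{l_1}{\sqrt{1+l_1^2}} \to \mathrm{sgn}(H)$; combined with $\frac{1}{h(r)} \to 1$ this yields $\lim_{r\to\infty} f_1'(r) = \mathrm{sgn}(H)$, i.e. $+1$ when $H>0$ and $-1$ when $H<0$. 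If $H = 0$, then $l_1(r) = \frac{c_1}{r^2\sqrt{h(r)}} \to 0$, so $f_1'(r) \sim l_1(r) \to 0$; more precisely $f_1'(r) = O(r^{-2})$, which I would note for the convergence argument below.

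Next I would translate these limits into statements about the hypersurface. For $H \neq 0$: a constant-$t$ slice has unit normal proportional to $\partial_t$ and the tangent direction in the $t$–$r$ plane is $\partial_r$, whereas $\Sigma^1$ has tangent vector $e_3 \propto (f_1'(r), 1, 0, 0)$ in $(t,r)$ coordinates. The induced "slope" $\frac{dt}{dr} = f_1'(r)$ approaches $\pm 1$, and since $h(r) \to 1$, the metric in the $t$–$r$ plane approaches the flat Minkowski metric $-\mathrm{d}t^2 + \mathrm{d}r^2$ asymptotically; the curves $\frac{dt}{dr} = \pm 1$ are exactly the null curves of this limiting metric. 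Concretely I would check that $\langle e_3, e_3\rangle \to 0$ is false — $e_3$ is a genuine unit spacelike vector — so the right statement is that the \emph{direction} of $\Sigma^1$ approaches the null cone; equivalently, comparing $\Sigma^1$ with the outgoing null curve $v = t + (r + 2M\ln|r-2M|) = \mathrm{const}$ (whose slope is $\frac{dt}{dr} = 1 - \frac{1}{h(r)}\cdot(\text{correction})$, tending to $1$), one shows $f_1'(r) - (\text{slope of the null curve}) \to 0$, so $\Sigma^1$ is asymptotic to a null hypersurface. This is the meaning of "asymptotically null." For $H = 0$: since $f_1'(r) = O(r^{-2})$ is integrable on $[r_1, \infty)$, the improper integral $\int_{r_1}^\infty f_1'(r)\,\mathrm{d}r$ converges to some finite value, so $f_1(r) \to t_0 := \bar{c}_1 + \int_{r_1}^\infty f_1'\,\mathrm{d}r$ as $r \to \infty$. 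Hence $\Sigma^1$ approaches the constant slice $\{t = t_0\}$, and moreover $|f_1(r) - t_0| = O(r^{-1})$, giving a quantitative rate.

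The routine part is the limit computation; the part requiring care is formulating precisely what "asymptotically null" should mean and proving it in a coordinate-independent way. I expect the cleanest route is the comparison with the null coordinate curves $u = \mathrm{const}$ or $v = \mathrm{const}$: for $H > 0$ the hypersurface should be asymptotic to a $v = \mathrm{const}$ curve (the slope $f_1'$ tends to $+1$, matching the outgoing null direction), and for $H < 0$ to a $u = \mathrm{const}$ curve. One verifies this by writing the difference of slopes $f_1'(r) - 1$ (resp. $f_1'(r) + 1$) and checking it tends to $0$, then possibly integrating to compare the curves themselves up to bounded error; in the Kruskal picture this says $\Sigma^1$ runs off to the null line $L_+$ (resp. $L_-$) or to null infinity, consistent with Figure~\ref{nullgeo}. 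The only mild subtlety in the $H=0$ case is confirming integrability near $r = \infty$, which is immediate from $l_1 = O(r^{-2})$ and $h \to 1$, so the limit $t_0$ is well-defined and finite.
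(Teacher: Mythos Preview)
Your limit computation for $f_1'(r)$ is the same as the paper's: write out $f_1'$ in terms of $l_1$ and $h$, observe $h\to 1$ and $l_1\sim Hr$ (or $l_1\to 0$), and read off the limit. Where you diverge is in the ``asymptotically null'' step. You argue via tangent directions, comparing the slope $f_1'(r)\to\pm 1$ with the slopes of the null coordinate curves $u=\mathrm{const}$, $v=\mathrm{const}$ in the asymptotically Minkowski $(t,r)$-plane. The paper instead computes the squared length of the \emph{unnormalized} normal,
\[
\langle\nabla F,\nabla F\rangle=-\frac{1}{h(r)+\bigl(Hr+\tfrac{c_1}{r^2}\bigr)^2},
\]
and observes this tends to $0$ when $H\neq 0$; that is the entire argument. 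Your approach is geometrically equivalent (tangent becoming null and normal becoming null are the same condition on a hypersurface) but considerably longer, and your brief worry about $\langle e_3,e_3\rangle$ staying equal to $1$ is resolved exactly by passing to the unnormalized vector as the paper does. On the other hand, for $H=0$ you actually supply the integrability argument ($f_1'=O(r^{-2})$, hence $f_1(r)\to t_0$ finite) that the paper leaves implicit; this is a genuine addition and worth keeping.
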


\begin{proof}
Since
\begin{align*}
\lim_{r\to\infty}f_{1}'(r)
=\lim_{r\to\infty}\frac{l_{1}}{h(r)}\sqrt{\frac1{1+l_{1}^2}}
=\lim_{r\to\infty}\frac{Hr+\frac{c_{1}}{r^2}}
{\left(1-\frac{2M}{r}\right)\sqrt{1-\frac{2M}{r}+\left(Hr+\frac{c_{1}}{r^2}\right)^2}},
\end{align*}
the limit is $0$ if $H=0$, and is $\frac{H}{|H|}$ if $H\neq 0$.

We compute
\begin{align}
\langle\nabla F,\nabla F\rangle
=-\frac1{h(r)}+h(r)(f_{1}'(r))^2
=\frac{-1}{h(r)(1+l_{1}^2)}
=\frac{-1}{h(r)+\left(Hr+\frac{c_{1}}{r^2}\right)^2},\label{gradF}
\end{align}
and have $\lim\limits_{r\to\infty}\langle\nabla F,\nabla F\rangle=0$ if $H\neq0$.
\end{proof}

\begin{prop}\label{asym}
For a \SC hypersurface
$\Sigma^{1}=(f_{1}(r;H,c_{1},\bar{c}_1),r,\theta,\phi)$
in the Schwarzschild exterior, the following conclusions hold:
\begin{itemize}
\item[\rm(a)] If $c_{1}<-8M^3H$, then $f_{1}'(r)<0$ near $r=2M$,
and $f_{1}'(r)$ is of order $O((r-2M)^{-1})$.
It implies that $\lim\limits_{r\to2M^+}f_{1}(r)=\infty$.
\item[\rm(b)] If $c_{1}=-8M^3H$, then $H\cdot f_{1}'(r)\geq 0$,
and $f_{1}'(r)$ is of order $O((r-2M)^{-\frac12})$ when $H\neq 0$.
It implies that $\lim\limits_{r\to2M^+}f_{1}(r)$ is finite.
\item[\rm(c)] If $c_{1}>-8M^3H$, then $f_{1}'(r)>0$ near $r=2M$,
and $f_{1}'(r)$ is of order $O((r-2M)^{-1})$.
It implies that $\lim\limits_{r\to2M^+}f_{1}(r)=-\infty$.
\end{itemize}
When $c_{1}\neq-8M^3H$, the curve $(f_{1}(r),r)$ in $(t,r)$ spacetime is bounded by two null geodesics near $r=2M$.
For all $c_1\in\mathbb{R}$, the spacelike condition is preserved as $r\to 2M^+$.
\end{prop}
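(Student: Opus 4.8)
The plan is to reduce the whole statement to the elementary behaviour near $r=2M$ of $h(r)=\frac{r-2M}{r}$ and $g(r):=Hr+\frac{c_1}{r^2}$, since $f_{1}'(r)=\frac1{h(r)}\cdot\frac{l_1}{\sqrt{1+l_1^2}}$ with $l_1=g/\sqrt{h}$, and by a previous remark $\operatorname{sign}f_{1}'(r)=\operatorname{sign}l_1(r)=\operatorname{sign}g(r)$. The numerical key is $g(2M)=\frac{8M^3H+c_1}{4M^2}$, so cases (a), (b), (c) are precisely $g(2M)<0$, $g(2M)=0$, $g(2M)>0$; and in case (b) the identity $g(r)=\frac{H(r^3-8M^3)}{r^2}=\frac{H(r-2M)(r^2+2Mr+4M^2)}{r^2}$ both fixes $\operatorname{sign}g=\operatorname{sign}H$ on $(2M,\infty)$ (hence $H f_{1}'\ge 0$) and exhibits a simple zero of $g$ at $r=2M$.

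For (a) and (c): since $g(2M)\neq 0$ we have $l_1(r)\to\operatorname{sign}(g(2M))\cdot\infty$ as $r\to2M^{+}$, and expanding $x/\sqrt{1+x^2}$ at $x=\pm\infty$ (with $l_1^{-2}=h/g^2\to 0$) gives $\frac{l_1}{\sqrt{1+l_1^2}}=\operatorname{sign}(g(2M))-\frac{\operatorname{sign}(g(2M))}{2g^2}h+O(h^2)$. Hence
\[
f_{1}'(r)=\frac{\operatorname{sign}(g(2M))}{h(r)}-\frac{\operatorname{sign}(g(2M))}{2g(r)^2}+O(h(r)),
\]
so $f_{1}'$ is of order $O((r-2M)^{-1})$ with the asserted sign near $r=2M$, and $f_{1}'(r)-\operatorname{sign}(g(2M))/h(r)$ has a finite limit at $2M$. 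Since $\int^{r}\frac{ds}{h(s)}=r+2M\ln(r-2M)+\mathrm{const}$ and the error term is integrable at $2M$, the formula $f_{1}(r)=\bar c_1+\int_{r_1}^{r} f_{1}'$ yields $\lim_{r\to2M^{+}}f_{1}(r)=+\infty$ in (a) and $-\infty$ in (c). Moreover the same computation gives $f_{1}(r)=\operatorname{sign}(g(2M))\bigl(r+2M\ln(r-2M)\bigr)+O(1)$; since the null geodesics (for $\theta,\phi$ fixed) are the level sets of $u=t-(r+2M\ln|r-2M|)$ and $v=t+(r+2M\ln|r-2M|)$, the curve $(f_{1}(r),r)$ stays at bounded $t$-distance from one such line, hence is trapped between two of them near $r=2M$.

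For (b) with $H\neq 0$: the factorization gives $l_1(r)=H\sqrt{r-2M}\cdot\frac{r^2+2Mr+4M^2}{r^{3/2}}$, which tends to $0$ of order $(r-2M)^{1/2}$, so $\sqrt{1+l_1^2}\to 1$ and $f_{1}'(r)=\frac{l_1}{h(r)}(1+o(1))$ is of order $(r-2M)^{-1/2}$ with $\operatorname{sign}f_{1}'=\operatorname{sign}H$; as $(r-2M)^{-1/2}$ is integrable at $2M$, $\lim_{r\to2M^{+}}f_{1}(r)$ is finite. When $H=0$ case (b) forces $c_1=0$, so $l_1\equiv0$, $f_{1}'\equiv0$ and the claim is trivial. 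Finally, the spacelike assertion follows from the closed form $\langle\nabla F,\nabla F\rangle=\frac{-1}{h(r)+g(r)^2}$ of (\ref{gradF}): on $(2M,\infty)$ the denominator $h+g^2$ is strictly positive, so $\Sigma^{1}$ is spacelike there, and as $r\to2M^{+}$ it tends to $g(2M)^2>0$ in (a),(c) and to $0^{+}$ in (b); in every case $\langle\nabla F,\nabla F\rangle$ stays negative, so spacelikeness is preserved up to $r=2M$.

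The bookkeeping with $h$, $g$, $l_1$ is routine; the one point needing care is upgrading ``$f_{1}'=O((r-2M)^{-1})$'' to ``$f_{1}'(r)-\operatorname{sign}(g(2M))/h(r)$ is bounded and convergent at $2M$'', since only this finer estimate yields the ``trapped between two null geodesics'' conclusion rather than merely ``$f_{1}$ blows up''. One should also note that in case (b) the limit $\langle\nabla F,\nabla F\rangle\to-\infty$ is a degeneration of the \emph{coordinate} expression, not of spacelikeness, which becomes fully transparent once one passes to Kruskal coordinates as in the later sections.
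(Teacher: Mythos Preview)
Your proof is correct and follows essentially the same route as the paper: the paper also separates out the principal part $\pm 1/h(r)$ of $f_1'$ via a Taylor expansion in $1/(1+l_1^2)=h/(h+g^2)$, bounds the integrable remainder, integrates against $\int dr/h(r)=r+2M\ln(r-2M)$ to trap the curve between two null geodesics, and handles case~(b) by the same factorization $Hr^3-8M^3H=H(r-2M)(r^2+2Mr+4M^2)$. Your only cosmetic difference is treating (a) and (c) simultaneously through $\operatorname{sign}(g(2M))$, whereas the paper writes the two cases out separately; the content is identical.
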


\begin{proof}
From (\ref{gradF}), we know that if $c_{1}\neq-8M^3H$, then $\lim\limits_{r\to 2M^+}\langle\nabla F,\nabla F\rangle
=\frac{-1}{\left(2MH+\frac{c_{1}}{4M^2}\right)^2}<0$, and if $c_{1}=-8M^3H$, then
\begin{align*}
\lim_{r\to 2M^+}\langle\nabla F,\nabla F\rangle
=\lim_{r\to 2M^+}\frac{-1}{\frac{r-2M}{r}+\left(\frac{H(r-2M)(r^2+2Mr+4M^2)}{r^2}\right)^2}=-\infty.
\end{align*}
Hence the spacelike condition is preserved as $r\to 2M^+$ for all $c_1\in\mathbb{R}$.

Now we  prove the asymptotic behavior of $f_1(r)$.

\noindent
\underline{If $c_{1}<-8M^3H$}, then $f_{1}'(r)<0$
(and thus $l_{1}(r)<0$) on $r\in(2M,2M+\delta_1)$ for some $\delta_1>0$.
Therefore, on $(2M,2M+\delta_1)$ by the Taylor's theorem, we have
\begin{align*}
f_{1}'&=\frac{l_{1}}{h}\sqrt{\frac{1}{1+l_{1}^2}}=\frac{1}{-h}\sqrt{1-\frac1{1+l_{1}^2}} \\
&\approx\frac{1}{-h}\left(
1-\frac12\left(\frac1{1+l_{1}^2}\right)-\frac18\left(\frac1{1+l_{1}^2}\right)^2
-\frac3{16}\left(\frac1{1+l_{1}^2}\right)^3-\cdots\right) \\
&\approx\frac1{-h}+\frac12\frac{1}{\left(h+\left(Hr+\frac{c_{1}}{r^2}\right)^2\right)}
+\frac18\frac{h}{\left(h+\left(Hr+\frac{c_{1}}{r^2}\right)^2\right)^2}+\cdots \\
&=\frac1{-h}+\mbox{remainder terms.}
\end{align*}
Remainder terms can be bounded above by $\frac{1}{2\left(Hr+\frac{c_{1}}{r^2}\right)^2}$,
so $f_1'(r)$ is of order $O((r-2M)^{-1})$. Furthermore, we have
\begin{align}
\frac1{-h(r)}\leq f'(r)\leq\frac1{-h(r)}+\frac{1}{2\left(Hr+\frac{c_{1}}{r^2}\right)^2} \label{typeIineq}
\end{align}
on $(2M,2M+\delta_1)$.
We integrate inequalities (\ref{typeIineq}) and get
\begin{align*}
\int_r^{r_1}-\frac{x}{x-2M}\mbox{d}x\leq\int_r^{r_1}&
f_{1}'(x)\mbox{d}x\leq
\int_r^{r_1}\left(-\frac{x}{x-2M}+\frac{1}{2\left(Hx+\frac{c_{1}}{x^2}\right)^2}\right)\mathrm{d}x.
\end{align*}
The integral $\int_r^{r_1}\frac{1}{2\left(Hx+\frac{c_{1}}{x^2}\right)^2}\mathrm{d}x$ is finite,
and we denote it by $C_1$.
It follows that
\begin{align*}
&\;-(r_1+2M\ln(r_1-2M))+(r+2M\ln(r-2M)) \\
\leq&\;f_1(r_1)-f_1(r) \\
\leq&\;-(r_1+2M\ln(r_1-2M))+(r+2M\ln(r-2M))+C_1 \\
\Rightarrow \quad -&(r+2M\ln(r-2M))+C_2-C_1\leq f_1(r)\leq-(r+2M\ln(r-2M))+C_2,
\end{align*}
where $C_2=f_1(r_1)+(r_1+2M\ln(r_1-2M))$.
Hence the curve $t=f_1(r)$ is bounded by two null geodesics
$t+(r+2M\ln(r-2M))=C_2-C_1$ and $t+(r+2M\ln(r-2M))=C_2$ near $r=2M$.

\noindent\underline{If $c_{1}=-8M^3H$}, then
\begin{align*}
l_{1}=\left(\frac{r}{r-2M}\right)^{\frac12}\left(\frac{Hr^3-8M^3H}{r^2}\right)
=H\left(\frac{r-2M}{r}\right)^{\frac12}\left(\frac{r^2+2Mr+4M^2}{r}\right).
\end{align*}
Direct computation gives
\begin{align*}
f_{1}'=H\left(\frac{r}{r-2M}\right)^{\frac12}
\left(\frac{r(r^2+2Mr+4M^2)^2}{r^3+H^2(r-2M)(r^2+2Mr+4M^2)^2}\right)^{\frac12},
\end{align*}
and thus $f_{1}'$ is of order $O((r-2M)^{-\frac12})$ when $H\neq 0$.

\noindent\underline{If $c_{1}>-8M^3H$},
then both $f_{1}'(r)$ and $l_{1}(r)$ are positive on $(2M,2M+\delta_2)$ for some $\delta_2>0$.
By the Taylor's theorem, we have
\begin{align*}
f_{1}'&=\frac1{h}\sqrt{1-\frac1{1+l_{1}^2}}
\approx\frac1{h}-\frac12\frac{1}{\left(h+\left(Hr+\frac{c_{1}}{r^2}\right)^2\right)}
-\frac18\frac{h}{\left(h+\left(Hr+\frac{c_{1}}{r^2}\right)^2\right)^2}-\cdots.
\end{align*}
The remainder terms are greater than $\frac{-1}{2\left(Hr+\frac{c_{1}}{r^2}\right)^2}$ on $(2M,2M+\delta_2)$.
This implies
\begin{align*}
\frac{1}{h(r)}-\frac{1}{2\left(Hr+\frac{c_{1}}{r^2}\right)^2}\leq f_1'(r)\leq\frac1{h(r)}.
\end{align*}
We integrate the above inequalities and get
\begin{align*}
\int_{r}^{r_1}\left(\frac{1}{h(x)}-\frac{1}{2\left(Hx+\frac{c_{1}}{x^2}\right)^2}\right)\mbox{d}x
\leq&\int_r^{r_1}f_1'(x)\mbox{d}x\leq\int_r^{r_1}\frac1{h(x)}\mbox{d}x.
\end{align*}
The integral $\int_{r}^{r_1}\frac{1}{2\left(Hx+\frac{c_{1}}{x^2}\right)^2}\mathrm{d}x$ is finite,
and we denote it by $C_3$.
It follows that
\begin{align*}
&\,r_1+2M\ln(r_1-2M)-(r+2M\ln(r-2M))-C_3 \\
\leq&\,f_1(r_1)-f_1(r)\\
\leq&\,r_1+2M\ln(r_1-2M)-(r+2M\ln(r-2M)) \\
\Rightarrow \quad (r&+2M\ln(r-2M))+C_4\leq f_1(r)\leq (r+2M\ln(r-2M))+C_3+C_4,
\end{align*}
where $C_4=f_1(r_1)-(r_1+2M\ln(r_1-2M))$.
Hence the curve $t=f_1(r)$ is bounded by two null geodesics
$t-(r+2M\ln(r-2M))=C_4$ and $t-(r+2M\ln(r-2M))=C_3+C_4$ near $r=2M$.
\end{proof}

Figure \ref{CMCexterior} pictures \SC hypersurfaces in the Schwarzschild exterior and their images
 in region {\tt I} of the Kruskal extension.

\begin{figure}[h]
\centering
\psfrag{A}{$r=0$}
\psfrag{C}{$r=2M$}
\psfrag{E}{$c_1<-8M^3H$}
\psfrag{F}{$c_1=-8M^3H$}
\psfrag{J}{$c_1>-8M^3H$}
\psfrag{r}{$r$}
\psfrag{t}{$t$}
\hspace*{-25mm}
\includegraphics[height=70mm,width=54mm]{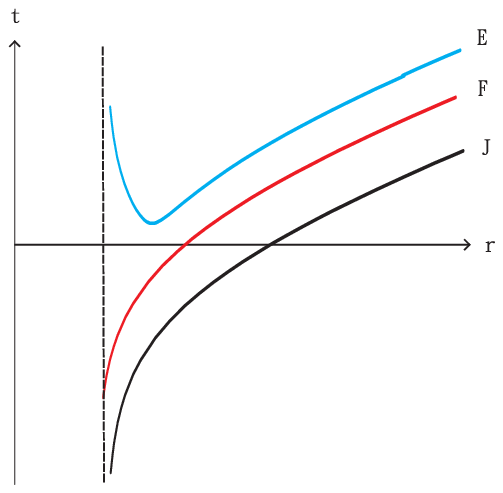}
\hspace*{25mm}
\includegraphics[height=70mm,width=54mm]{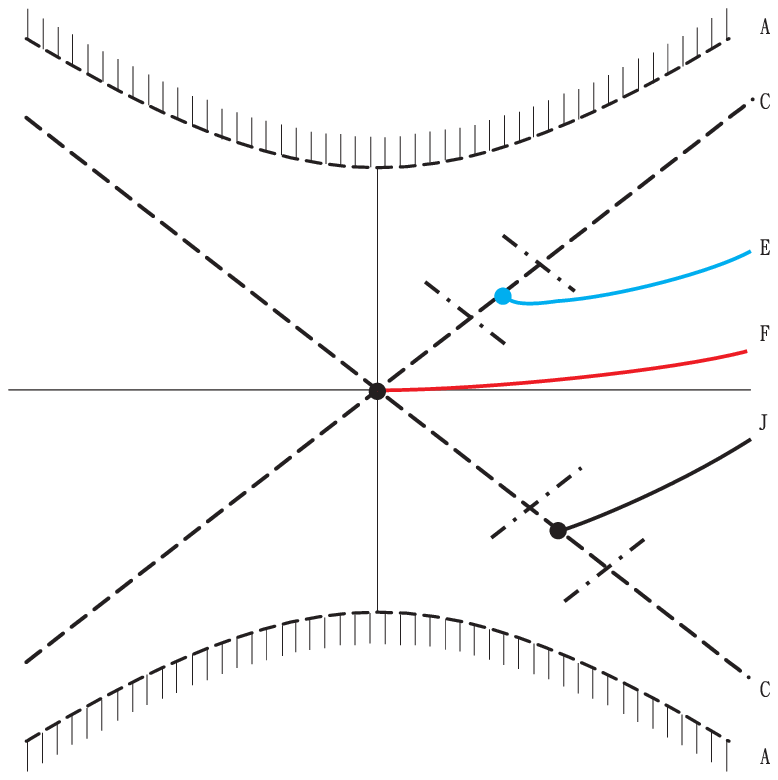}
\caption{\SC hypersurfaces in Schwarzschild exterior and region {\tt I}.} \label{CMCexterior}
\end{figure}

\section{SS-CMC solutions in region {\tt I\!I}}
We consider \SC hypersurfaces in the Schwarzschild interior in this section.
\subsection{Cylindrical hypersurfaces $r=\mbox{constant}$}
Notice that $h(r)=1-\frac{2M}r<0$ on $0<r<2M$,
so in this region $r$-direction is timelike and $t$-direction is spacelike.
Furthermore, $-\partial_r$ is future directed.
We can assume that a \SC hypersurface is written as
$(t,g(t),\theta,\phi)$ for some function $r=g(t)$.

\begin{prop}{\rm\cite{MO}}\label{cylindrical}
Each constant slice $r=r_0, r_0\in(0,2M)$ is a \SC hypersurface with
\begin{align*}
H(r_0)=\frac{2r_0-3M}{3\sqrt{r_0^3(2M-r_0)}}.
\end{align*}
These hypersurfaces are called cylindrical hypersurfaces.
\end{prop}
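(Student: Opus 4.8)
The plan is to verify directly that the hypersurface $\Sigma = (t, g(t), \theta, \phi)$ with $g(t) \equiv r_0$ constant is spacelike and to compute its mean curvature, which should turn out to be independent of $t$. Since in the interior region $h(r_0) = 1 - \tfrac{2M}{r_0} < 0$, I would first rewrite the metric $\mathrm{d}s^2$ restricted to $\Sigma$: with $r = r_0$ fixed we get $\mathrm{d}s^2|_\Sigma = -h(r_0)\,\mathrm{d}t^2 + r_0^2\,\mathrm{d}\theta^2 + r_0^2\sin^2\theta\,\mathrm{d}\phi^2$, and since $-h(r_0) > 0$ this is positive definite, so $\Sigma$ is spacelike. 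The natural candidate for the future-directed unit timelike normal is a multiple of $\partial_r$; because $-\partial_r$ is future directed on $0 < r < 2M$ and $\langle \partial_r, \partial_r \rangle = g_{rr} = 1/h(r_0) < 0$, the future-directed unit normal is $N = -\tfrac{1}{\sqrt{-1/h(r_0)}}\,\partial_r = -\sqrt{-h(r_0)}\,\partial_r$ (equivalently $N = \sqrt{\,\tfrac{2M}{r_0} - 1\,}\;(-\partial_r)$).

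Next I would set up an orthonormal frame $\{e_1, e_2, e_3\}$ tangent to $\Sigma$, namely $e_1 = \tfrac{1}{\sqrt{-h(r_0)}}\,\partial_t$, $e_2 = \tfrac{1}{r_0}\,\partial_\theta$, $e_3 = \tfrac{1}{r_0\sin\theta}\,\partial_\phi$, and compute $h_{ij} = \langle \nabla_{e_i} N, e_j\rangle$ using the Christoffel symbols of the Schwarzschild metric. The three relevant ones are $\Gamma^r_{tt}$, $\Gamma^r_{\theta\theta}$, $\Gamma^r_{\phi\phi}$. A short computation gives $\Gamma^r_{tt} = \tfrac12 h(r_0) h'(r_0)$, $\Gamma^r_{\theta\theta} = -r_0 h(r_0)$, and $\Gamma^r_{\phi\phi} = -r_0 h(r_0)\sin^2\theta$, with $h'(r) = \tfrac{2M}{r^2}$. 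Feeding these into $\nabla_{e_i} N$ (remembering $N = -\sqrt{-h(r_0)}\,\partial_r$ and that $N$ has constant component along $\partial_r$ on $\Sigma$) yields $h_{11} = -\sqrt{-h(r_0)}\cdot\tfrac{\Gamma^r_{tt}}{-h(r_0)}$-type expressions; after simplification each principal curvature comes out as a function of $r_0$ alone, and the mean curvature $H(r_0) = \tfrac13(h_{11}+h_{22}+h_{33})$ should collapse to $\tfrac{2r_0 - 3M}{3\sqrt{r_0^3(2M - r_0)}}$.

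The only mildly delicate points are bookkeeping of signs — the factor $-h(r_0) > 0$ appears under several square roots and one must keep the orientation of $N$ consistent with the convention that $\partial_T$, hence $-\partial_r$ in region {\tt II}, is future directed — and making sure the $\theta$-dependence in $\Gamma^r_{\phi\phi}$ cancels against the $\sin^2\theta$ in $g^{\phi\phi}$ so that $h_{33}$ is genuinely $\theta$-independent. Alternatively, and perhaps more cleanly, I would derive it as a limiting/degenerate case of the mean curvature equation analogous to (\ref{cmceq1}) written for a graph $r = g(t)$ in the interior: substituting $g' \equiv 0$ into that ODE forces the nonderivative terms to balance, and solving the resulting algebraic relation for $H$ in terms of $r_0$ gives the stated formula immediately. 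I expect the sign/orientation tracking to be the main obstacle; the computation itself is routine.
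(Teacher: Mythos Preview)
Your proposal is correct and follows essentially the same approach as the paper: choose the future-directed unit normal $N=-\sqrt{-h(r_0)}\,\partial_r$, set up the obvious orthonormal frame on $\Sigma$, and compute the second fundamental form directly. The paper phrases the covariant derivatives via $\nabla_{\partial_t}\partial_r=\tfrac{h'}{2h}\partial_t$ and $\nabla_V\partial_r=\tfrac{V}{r}$ for spherical $V$ rather than through the Christoffel symbols $\Gamma^r_{tt},\Gamma^r_{\theta\theta},\Gamma^r_{\phi\phi}$ you list, but this is only a cosmetic difference; your alternative idea of reading off the formula from the degenerate case $g'\equiv 0$ of the interior graph equation is not used in the paper but would also work.
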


Cylindrical hypersurfaces are  known in \cite{MO}. Here
 we give a simple proof for completeness.

\begin{proof}
Choose $e_4=(0,-\sqrt{-h(r)},0,0)$ to be a future directed unit timelike normal vector,
and there is a canonical orthonormal frame
\begin{align*}
e_1=\frac{(0,0,1,0)}{r},\quad e_2=\frac{(0,0,0,1)}{r\sin\theta},\quad e_3=\frac{(1,0,0,0)}{\sqrt{-h(r)}}
\end{align*}
on constant slices. Since
$\nabla_{\partial_t}\partial_r=\frac{h'(r)}{2h(r)}\partial_t,
\nabla_{V}\partial_r=\frac{V}{r}$ for $V\in T_{(p,q)}\{p\}\times\mathbb{S}^2$,
we have
\begin{align*}
h_{11}
&=\langle\nabla_{e_1}e_4,e_1\rangle
=-\sqrt{-h(r)}\langle{\nabla_{e_1}\partial_r,e_1}\rangle
=-\frac{\sqrt{-h(r)}}{r}, \\
h_{22}
&=\langle\nabla_{e_2}e_4,e_2\rangle
=-\sqrt{-h(r)}\langle{\nabla_{e_2}\partial_r,e_2}\rangle
=-\frac{\sqrt{-h(r)}}{r}, \\
h_{33}
&=\langle\nabla_{e_3}e_4,e_3\rangle
=\frac{-1}{\sqrt{-h(r)}}\langle\nabla_{\partial_t}\partial_r,\partial_t\rangle
=\frac{h'(r)}{2\sqrt{-h(r)}},
\end{align*}
and $h_{ij}=0$ for $i\neq j$.
Hence the mean curvature is
\begin{align*}
H&=\frac13\left(\frac{-2\sqrt{-h(r)}}{r}+\frac{h'(r)}{2\sqrt{-h(r)}}\right)
=\frac1{3\sqrt{-h(r)}}\left(\frac{2h(r)}{r}+\frac{h'(r)}{2}\right)=\frac{2r-3M}{3\sqrt{r^3(2M-r)}},
\end{align*}
which is a constant for each fixed $r\in(0,2M)$.
\end{proof}

The following corollary is an easy consequence of Proposition \ref{cylindrical}.
\begin{cor}
Cylindrical hypersurfaces $r=r_0, r_0\in(0,2M)$ have the following properties.
\begin{itemize}
\item[\rm(a)] If $r_0\in\left(0,\frac32M\right)$, then $H(r_0)<0$ and $\lim\limits_{r\to 0^+}H(r)=-\infty$.
\item[\rm(b)] If $r_0\in\left(\frac32M,2M\right)$, then $H(r_0)>0$ and $\lim\limits_{r\to 2M^-}H(r)=\infty$.
\item[\rm(c)] If $r_0=\frac32M$, then the cylindrical hypersurface is a maximal hypersurface.
\end{itemize}
\end{cor}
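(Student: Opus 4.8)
The plan is to read off all three statements directly from the closed-form expression $H(r_0)=\frac{2r_0-3M}{3\sqrt{r_0^3(2M-r_0)}}$ furnished by Proposition~\ref{cylindrical}, viewing $H$ as a function of $r_0$ on the open interval $(0,2M)$.

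First I would observe that the denominator $3\sqrt{r_0^3(2M-r_0)}$ is strictly positive for every $r_0\in(0,2M)$, since there both $r_0^3>0$ and $2M-r_0>0$. Hence the sign of $H(r_0)$ is governed entirely by the numerator $2r_0-3M$, which is negative precisely when $r_0<\frac32M$, positive precisely when $r_0>\frac32M$, and zero precisely when $r_0=\frac32M$. This immediately gives the sign assertions in (a) and (b), and in (c) the vanishing of $H$ says exactly that the cylindrical hypersurface $r=\frac32M$ has zero mean curvature, i.e.\ is maximal by definition.

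Next I would dispatch the two limits. As $r\to 0^+$ the numerator tends to $-3M<0$ while the denominator tends to $0^+$ (because $r^3\to 0$ dominates), so $H(r)\to-\infty$; as $r\to 2M^-$ the numerator tends to $4M-3M=M>0$ while the denominator again tends to $0^+$ (because $2M-r\to 0$), so $H(r)\to+\infty$. Combining these with the sign analysis of the previous step completes the argument.

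Every step here is an elementary inspection of an explicit expression, so there is no genuine obstacle; the only point deserving even minimal care is confirming that the square-root denominator never vanishes on the \emph{open} interval $(0,2M)$, which is precisely why the values $r=0$ and $r=2M$ enter only as limits rather than as attained ones.
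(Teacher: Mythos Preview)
Your proposal is correct and matches the paper's approach: the paper states the corollary as ``an easy consequence of Proposition~\ref{cylindrical}'' and gives no further argument, so your direct analysis of the sign of $2r_0-3M$ and the limiting behavior of the explicit formula is exactly what is intended.
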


\subsection{Noncylindrical SS-CMC hypersurfaces}\label{section332}
For $r=g(t)\neq\mbox{constant}$, we consider its inverse function, and denote $t=f_{2}(r)$ whenever it is defined.
Since $f_2(r)$ is obtained from the inverse function, we have $f_2'(r)\neq 0$ and will allow $f_2'(r)=\infty$ or $-\infty$.
\begin{prop}\label{prop6}
Suppose $\Sigma^{2}=(f_{2}(r),r,\theta,\phi)$ is a \SC hypersurface in Schwarzschild interior.
Then  $f_{2}'$ can be derived as
\begin{align*}
&f_{2}'=\left\{
\begin{array}{ll}
\displaystyle{\frac{1}{-h}\sqrt{\frac{l_{2}^2}{l_{2}^2-1}}} & \mbox{if } f_{2}'(r)>0 \\
\displaystyle{\frac{1}{h}\sqrt{\frac{l_{2}^2}{l_{2}^2-1}}} & \mbox{if } f_{2}'(r)<0, \\
\end{array}\right.
\quad\mbox{where}\quad l_{2}(r;H,c_2)=\frac1{\sqrt{-h(r)}}\left(-Hr-\frac{c_{2}}{r^2}\right).
\end{align*}
The function $l_2$ should satisfy $l_2>1$,
which implies $c_{2}<0$ when $H>0$ and $c_{2}<-8M^3H$ when $H<0$.
The integration of $f_2'$ gives
\begin{align}
&f_{2}^*(r;H,c_{2},\bar{c}_2)
=\int_{r_2}^r\frac{1}{-h(r)}\sqrt{\frac{l^2_{2}(r;H,c_{2})}{l_{2}^2(r;H,c_{2})-1}}\mathrm{d}r+\bar{c}_2,
\quad\mbox{or}
\label{f2positive} \\
&f_{2}^{**}(r;H,c_{2},\bar{c}_2')
=\int_{r_2'}^r\frac{1}{h(r)}\sqrt{\frac{l^2_{2}(r;H,c_{2})}{l_{2}^2(r;H,c_{2})-1}}\mathrm{d}r+\bar{c}_2'
\label{f2negative}
\end{align}
according to the sign of $f_2'(r)$, where $\bar{c}_2,\bar{c}_2'$ are constants,
and $r_2,r_2'$ are points in the domain of $f_2^*(r)$ and $f_2^{**}(r)$, respectively.
\end{prop}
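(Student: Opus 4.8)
The plan is to follow the template of Proposition~\ref{scmc3prop1}, keeping in mind that on $0<r<2M$ one has $h(r)<0$, so the $r$--direction is timelike, the $t$--direction is spacelike, and $-\partial_r$ is future directed. First I would realize $\Sigma^2$ as a level set. On a subinterval of $(0,2M)$ on which $r=g(t)$ is monotone, set $F=-t+f_2(r)$ if $f_2'>0$ and $F=t-f_2(r)$ if $f_2'<0$, the sign being chosen so that $\nabla F=\pm\left(\frac1h\partial_t+hf_2'\partial_r\right)$ is future directed (in the interior the $\partial_r$--component of a future timelike vector is negative). A short computation gives
\[
\langle\nabla F,\nabla F\rangle=-\frac1h+h(f_2')^2=\frac{(f_2'h)^2-1}{h},
\]
and since $h<0$ the spacelike condition $\langle\nabla F,\nabla F\rangle<0$ is equivalent to $|f_2'(r)h(r)|>1$, the reverse of the exterior inequality~(\ref{spacelikeExterior}). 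Equivalently one may work with the graph $F=r-g(t)$ over the spacelike coordinate $t$, where $g'$ is finite, and pass to $f_2=g^{-1}$ at the end; turning points $g'(t)=0$ then account for the values $f_2'=\pm\infty$ allowed in the statement.

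Next I would write down the future unit timelike normal $e_4=\nabla F/\sqrt{-\langle\nabla F,\nabla F\rangle}$ together with the canonical orthonormal frame $e_1=\partial_\theta/r$, $e_2=\partial_\phi/(r\sin\theta)$ and $e_3$ the unit spacelike vector in the $(t,r)$--plane tangent to $\Sigma^2$, and compute the second fundamental form by the same Christoffel--symbol computation used in Propositions~\ref{scmc3prop1} and~\ref{cylindrical}. Spherical symmetry forces $h_{11}=h_{22}$, equal to $1/r$ times the $\partial_r$--component of $e_4$, i.e.\ (up to the overall sign fixed above) $h_{11}=\pm\frac{f_2'h}{r}\sqrt{\frac{-h}{(f_2'h)^2-1}}$, while $h_{33}$ contains $f_2''$; imposing that the mean curvature $H=\frac13(2h_{11}+h_{33})$ be constant produces the mean curvature equation, a second order ODE for $f_2$ entirely analogous to~(\ref{cmceq1}).

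To integrate it I would reduce the order exactly as in the exterior case, using the substitution adapted to $|f_2'h|>1$: introduce a boost parameter $\beta$ defined by $f_2'h=\pm\coth\beta$ and set $l_2:=\cosh\beta$, so that $l_2>1$, $(f_2'h)^2=l_2^2/(l_2^2-1)$, and hence $h_{11}=\pm\frac{\sqrt{-h}}{r}\,l_2$. Substituting into the mean curvature equation and simplifying, it collapses to a first order \emph{linear} ODE for $l_2$ with essentially the same integrating factor $r^2\sqrt{|h|}$ as in region~{\tt I}; solving it introduces one constant $c_2$ and gives $l_2=\frac1{\sqrt{-h}}\left(-Hr-\frac{c_2}{r^2}\right)$. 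Inverting, $f_2'h=\pm\sqrt{l_2^2/(l_2^2-1)}$, and choosing the sign according to the sign of $f_2'$ on the monotone subinterval yields the two formulas for $f_2'$; a further integration from a basepoint $r_2$ (resp.\ $r_2'$) gives~(\ref{f2positive})--(\ref{f2negative}).

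Finally, for the range constraint: $l_2=\cosh\beta>1$ must hold, which by the closed form is $l_2>1\Leftrightarrow\left(Hr^3+c_2\right)^2>r^3(2M-r)$, together with the positivity requirement $Hr^3+c_2<0$ on the domain of definition (so that $l_2>0$). Examining $l_2=\frac{-(Hr^3+c_2)}{r^{3/2}\sqrt{2M-r}}$ as $r\to0^+$ and as $r\to2M^-$, where the denominator tends to $0$ so that $l_2$ blows up with a sign determined by the numerator, one reads off that such a solution can exist only when $c_2<0$ (if $H>0$) or $c_2<-8M^3H$ (if $H<0$), which is the claimed condition. I expect the main obstacle to be sign--bookkeeping rather than any genuine difficulty: because $r$ is timelike and $-\partial_r$ is future directed, one must keep careful track of which choice of $F$ (equivalently, which branch of $g^{-1}$) is in force, and it is precisely this that splits the solution into the two pieces $f_2^*$ and $f_2^{**}$. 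A secondary point is to confirm that the hyperbolic substitution remains valid along the solution, i.e.\ that $|f_2'h|>1$, hence $l_2>1$, is preserved; this is checked by a direct evaluation of $\langle\nabla F,\nabla F\rangle$ near the endpoints, analogous to the one in Proposition~\ref{asym}.
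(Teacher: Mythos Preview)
Your proposal is correct and follows essentially the same strategy as the paper: the level-set setup with the sign of $F$ chosen so that $\nabla F$ is future directed, the same orthonormal frame and second fundamental form computation, the resulting mean curvature ODE, and a reduction of order adapted to the interior spacelike inequality $|f_2'h|>1$. The only difference is cosmetic: the paper uses the trigonometric substitution $f_2'h=\sec\eta$ with $l_2=\csc\eta$ (taking $\eta\in(\tfrac\pi2,\pi)$ when $f_2'>0$ and $\eta\in(0,\tfrac\pi2)$ when $f_2'<0$), whereas you use the equivalent hyperbolic substitution $f_2'h=\pm\coth\beta$ with $l_2=\cosh\beta$; either way one lands on the same linear first-order ODE~(\ref{csc}) for $l_2$ and hence the same formula.
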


\begin{rem}
In this article, when we write $f_2(r)$, it means both $f_2^*(r)$ and $f_2^{**}(r)$.
\end{rem}

\begin{proof}
First we consider the case $f_{2}'(r)>0$.
Denote $F(t,r,\theta,\phi)=-t+f_{2}(r)$, we have $\nabla F=\frac1{h(r)}\partial_t+f_{2}'(r)h(r)\partial_r$
is future directed because it is in the direction of decreasing $r$.
The spacelike condition (\ref{spacelikecond}) is equivalent to
\begin{align}
-\frac1{h(r)}+(f_{2}'(r))^2h(r)<0 \Leftrightarrow (f_{2}'(r)h(r))^2>1.
\label{spacelike2}
\end{align}
Hence future directed timelike normal vector is
\begin{align*}
e_4=\frac{\left(\frac1{h(r)},h(r)f_{2}'(r),0,0\right)}{\sqrt{\frac1{h(r)}-(f_{2}'(r))^2h(r)}},
\end{align*}
which has the same expression as (\ref{normalext}), and we can take a canonical orthonormal frame
on $\Sigma^2$ with the same expressions as (\ref{spacelikeext}).
Therefore, the mean curvature equation will be
\begin{align}
f_{2}''+\left(\left(\frac1{h}-(f_{2}')^2h\right)\left(\frac{2h}{r}+\frac{h'}2\right)
+\frac{h'}{h}\right)f_{2}'-3H\left(\frac1{h}-(f_{2}')^2h\right)^{\frac32}=0.
\label{CMCeq2}
\end{align}
To solve $f_2(r)$, from (\ref{spacelike2}) we can make change of variable by $\sec(\eta(r))=f_{2}'(r)h(r)$.
Since $h(r)=1-\frac{2M}r<0$ on $0<r<2M$, we can choose the range of $\eta$ to be $\left(\frac{\pi}2,\pi\right)$.
Then equation (\ref{CMCeq2}) becomes
\begin{align}
\left(\csc\eta\right)'+\left(\frac2r+\frac{h'}{2h}\right)\csc\eta+3H\frac1{(-h)^{\frac12}}=0
\Rightarrow
\csc\eta=\frac1{\sqrt{-h(r)}}\left(-Hr-\frac{c_2}{r^2}\right),
\label{csc}
\end{align}
where $c_2$ is a constant.
When writing $l_2(r;H,c_2)=\frac1{\sqrt{-h(r)}}\left(-Hr-\frac{c_2}{r^2}\right)=\csc\eta$,
we have
\begin{align*}
f_2'
=\frac{\sec\eta}{h}
=\frac1{-h\sqrt{1-\frac1{\csc^2\eta}}}
=\frac{1}{-h}\sqrt{\frac{l_2^2}{l_2^2-1}}.
\end{align*}
We remark that  $l_2=\csc\eta>1$ because $\eta\in\left(\frac\pi2,\pi\right)$.

For the case $f_{2}'(r)<0$, we choose $F(t,r,\theta,\phi)=t-f_{2}(r)$ such that
$\nabla F=-\frac1{h(r)}\partial_t-f_{2}'(r)h(r)\partial_r$ is future directed.
The spacelike condition is the same as (\ref{spacelike2}), but
future directed timelike normal vector is
\begin{align*}
e_4=\frac{\left(-\frac1{h(r)},-f_{2}'(r)h(r),0,0\right)}{\sqrt{\frac1{h(r)}-(f_{2}'(r))^2h(r)}}.
\end{align*}
There is a canonical orthonormal frame
\begin{align*}
e_1=\frac{(0,0,1,0)}{r},\quad e_2=\frac{(0,0,0,1)}{r\sin\theta},\quad\mbox{and}\quad
e_3=\frac{(-f_{2}'(r),-1,0,0)}{\sqrt{\frac1{h(r)}-(f_{2}'(r))^2h(r)}}
\end{align*}
on $\Sigma^2$ such that it has the same orientation as the case of $f_{2}'(r)>0$.
The second fundamental form of $\Sigma^2$ in $(S,\mbox{d}s^2)$ are
\begin{align*}
h_{11}&=-\frac1{\left(\frac1h-(f_{2}')^2h\right)^{\frac12}}\frac{hf_{2}'}{r},\quad
h_{22}=-\frac1{\left(\frac1h-(f_{2}')^2h\right)^{\frac12}}\frac{hf_{2}'}{r}, \\
h_{33}&=\frac{1}{\left(\frac1h-(f_{2}')^2h\right)^{\frac12}}
\left(\frac{-1}{\frac1h-(f_{2}')^2h}\left(f_{2}''+\frac{h'f_{2}'}{h}\right)-\frac{h'f_{2}'}2\right),
\end{align*}
and $h_{ij}=0$ for $i\neq j$.
Hence the mean curvature equation becomes
\begin{align}
f_{2}''+\left(\left(\frac1{h}-(f_{2}')^2h\right)\left(\frac{2h}{r}+\frac{h'}2\right)
+\frac{h'}{h}\right)f_{2}'+3H\left(\frac1{h}-(f_{2}')^2h\right)^{\frac32}=0.
\label{CMCeq3}
\end{align}
From (\ref{spacelike2}), we can change variable by $\sec(\eta(r))=f_{2}'(r)h(r)$, and
the range of $\eta$ can be chosen as $\left(0,\frac{\pi}2\right)$ because $h(r)=1-\frac{2M}r<0$ on $0<r<2M$.
Then (\ref{CMCeq3}) becomes
\begin{align*}
\left(\csc\eta\right)'+\left(\frac2r+\frac{h'}{2h}\right)\csc\eta+3H\frac1{(-h)^{\frac12}}=0
\Rightarrow
\csc\eta=\frac1{\sqrt{-h(r)}}\left(-Hr-\frac{c_2}{r^2}\right),
\end{align*}
which has the same expression as (\ref{csc}).
Set $l_2(r;H,c_2)=\frac1{\sqrt{-h(r)}}\left(-Hr-\frac{c_2}{r^2}\right)=\csc\eta$, then we have
\begin{align*}
f_{2}'(r;H,c_{2})=\frac{1}{h(r)}\sqrt{\frac{l_2^2(r;H,c_{2})}{l_2^2(r;H,c_{2})-1}}.
\end{align*}
We remark that $l_2=\csc\eta>1$ because $\eta\in(0,\frac\pi2)$.
\end{proof}
\subsection{Domain of SS-CMC solutions in region {\tt I\!I}}
The condition $l_2(r)>1$ will put restrictions on the domain of $f_{2}(r)$.
We have
\begin{align*}
l_2(r)=\frac1{\sqrt{-h(r)}}\left(-Hr-\frac{c_{2}}{r^2}\right)>1
\Rightarrow -Hr^3-r^{\frac32}(2M-r)^{\frac12}>c_2.
\end{align*}
Define a function $k_H(r)$ on $(0,2M)$ by
\begin{align}
k_{H}(r)=-Hr^3-r^{\frac32}(2M-r)^{\frac12}, \label{k2function}
\end{align}
then the domain of $f_2(r)$ will be
\begin{align*}
\{r\in(0,2M)|k_{H}(r)>c_{2}\}\cup\{r\in(0,2M)|k_{H}(r)=c_{2}\mbox{ and } f_{2}(r)\mbox{ is finite}\}.
\end{align*}

Now we analyze the function $k_{H}(r)$ to determine the set.
\begin{prop}\label{prop7}
Consider $k_{H}(r)$ as in {\rm(\ref{k2function})}, then $k_H(r)$ has a unique minimum point at $r=r_H$,
where $r_H$ is determined by $3Hr_{H}^{\frac32}(2M-r_H)^{\frac12}=2r_{H}-3M$.
\end{prop}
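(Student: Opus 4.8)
The plan is to study $k_H(r) = -Hr^3 - r^{3/2}(2M-r)^{1/2}$ on the open interval $(0,2M)$ by a standard first-derivative analysis, showing that the critical point equation has exactly one root and that it is a minimum. First I would differentiate: since $\frac{d}{dr}\bigl(r^{3/2}(2M-r)^{1/2}\bigr) = \frac{3}{2}r^{1/2}(2M-r)^{1/2} + r^{3/2}\cdot\frac{-1}{2}(2M-r)^{-1/2} = \frac{r^{1/2}\bigl(3(2M-r) - r\bigr)}{2(2M-r)^{1/2}} = \frac{r^{1/2}(6M-4r)}{2(2M-r)^{1/2}}$, we get
\begin{align*}
k_H'(r) = -3Hr^2 - \frac{r^{1/2}(6M-4r)}{2(2M-r)^{1/2}} = -\frac{r^{1/2}}{(2M-r)^{1/2}}\left(3Hr^{3/2}(2M-r)^{1/2} + (3M-2r)\right).
\end{align*}
Since the prefactor $r^{1/2}/(2M-r)^{1/2}$ is strictly positive on $(0,2M)$, the equation $k_H'(r)=0$ is equivalent to $3Hr^{3/2}(2M-r)^{1/2} = 2r-3M$, which is exactly the defining relation for $r_H$ in the statement.

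Next I would establish existence and uniqueness of a solution $r_H$ in $(0,2M)$, together with the fact that it is a minimum. The cleanest route is to set $\psi(r) := 3Hr^{3/2}(2M-r)^{1/2} - (2r-3M)$, the quantity whose sign controls $-k_H'(r)$ (up to the positive prefactor), and to show $\psi$ changes sign exactly once, from negative to positive as $r$ increases — which gives $k_H' < 0$ then $k_H' > 0$, hence a unique interior minimum. For the boundary behavior: as $r\to 0^+$, $3Hr^{3/2}(2M-r)^{1/2}\to 0$ while $-(2r-3M)\to 3M > 0$, so $\psi(0^+) = 3M > 0$ — wait, this has the sign giving $k_H' < 0$ near $0$, i.e. $k_H$ decreasing near $0$; good. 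As $r\to 2M^-$, $3Hr^{3/2}(2M-r)^{1/2}\to 0$ while $-(2r-3M)\to -M < 0$, so $\psi(2M^-) = -M < 0$, giving $k_H' > 0$ near $2M$, i.e. $k_H$ increasing near $2M$. Since $k_H$ is continuous on $(0,2M)$ and $C^1$ there, decreasing near $0$ and increasing near $2M$, it attains an interior minimum, so a critical point exists. (Alternatively one can note $k_H(r)\to 0$ at both endpoints when $H$ has appropriate sign, but the monotonicity argument is cleaner and sign-independent.)

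For uniqueness — which I expect to be the main obstacle, since $k_H''$ need not have a definite sign — I would argue that $\psi$ is \emph{strictly increasing} on $(0,2M)$, so it has at most one zero. Compute $\psi'(r) = 3H\cdot\frac{r^{1/2}(6M-4r)}{2(2M-r)^{1/2}} - 2$; this is not obviously signed when $H>0$ and $r$ is small. A more robust approach: rewrite the critical equation by squaring. From $3Hr^{3/2}(2M-r)^{1/2} = 2r-3M$, a genuine critical point forces $(2r-3M)$ and $H$ to have the same sign (since the left side's sign is that of $H$ on $(0,2M)$), and then squaring gives $9H^2 r^3(2M-r) = (2r-3M)^2$, i.e. the cubic $9H^2r^3(2M-r) - (2r-3M)^2 = 0$. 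One then shows this polynomial, restricted to the region where $H(2r-3M)>0$, has exactly one root in $(0,2M)$: expand $P(r) = -9H^2 r^4 + 18MH^2 r^3 - 4r^2 + 12Mr - 9M^2$, and check $P'$ or use that $P(0) = -9M^2 < 0$, $P(2M) = -M^2 < 0$, while $P(3M/2) = 9H^2(3M/2)^3(M/2) > 0$, so $P$ has a root in $(0,3M/2)$ and one in $(3M/2,2M)$ — but only one of these lies in the half where $H(2r-3M)>0$, matching $\operatorname{sgn}(H)$. Combining: at most one zero of $\psi$ on the relevant half-interval, and none on the other half (where left and right sides of the critical equation have opposite signs), yields uniqueness of $r_H$. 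I would then conclude by invoking the first-derivative sign change $k_H' < 0 \to k_H' > 0$ established above that $r_H$ is the unique minimum point of $k_H$ on $(0,2M)$.
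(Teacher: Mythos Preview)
Your derivative computation and the existence argument (via the signs of $\psi$ near the endpoints) are correct and match the paper. The paper writes the bracketed factor as $\bar{k}_H(r)+(3M-2r)$ with $\bar{k}_H(r)=3Hr^{3/2}(2M-r)^{1/2}$, which is exactly your $\psi$.

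The gap is in uniqueness. After squaring you only verify $P(0)<0$, $P(\tfrac32 M)>0$, $P(2M)<0$, which yields \emph{at least} one root of the quartic in each of $(0,\tfrac32 M)$ and $(\tfrac32 M,2M)$, not exactly one. A degree-four polynomial could a priori have three sign changes on one side, so the sentence ``only one of these lies in the half where $H(2r-3M)>0$'' is not yet justified: you have not excluded extra roots of $P$ (and hence extra zeros of $\psi$) on the relevant half-interval. The sign argument rules out zeros on the wrong half, but says nothing about multiplicity on the right half.

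The paper avoids the squaring altogether and closes this gap with a one-line monotonicity observation. From
\[
\bar{k}_H'(r)=\frac{3Hr^{1/2}(3M-2r)}{(2M-r)^{1/2}},
\]
one sees $\bar{k}_H$ has its unique critical point at $r=\tfrac32 M$ and is monotone on each of $(0,\tfrac32 M)$ and $(\tfrac32 M,2M)$. Your own sign observation forces any zero of $\psi=\bar{k}_H-(2r-3M)$ to lie in the half where $2r-3M$ has the sign of $H$. On that half (namely $(\tfrac32 M,2M)$ if $H>0$, $(0,\tfrac32 M)$ if $H<0$) the function $\bar{k}_H$ is strictly decreasing while $p(r)=2r-3M$ is strictly increasing, so $\psi$ is strictly decreasing and has at most one zero; together with the endpoint signs this gives exactly one. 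For $H=0$ the equation reduces to $2r-3M=0$. This directly yields the unique $r_H$ and the sign change $k_H'<0\to k_H'>0$ that makes it a minimum, and is shorter than counting roots of a quartic.
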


\begin{proof}
We differentiate $k_H(r)$ to get
\begin{align}
k_{H}'(r)=\frac{-r^{\frac12}}{(2M-r)^{\frac12}}\left(3Hr^{\frac32}(2M-r)^{\frac12}+3M-2r\right).
\label{diffkH}
\end{align}
Denote $\bar{k}_H(r)=3Hr^{\frac32}(2M-r)^{\frac12}$, then
\begin{align*}
\bar{k}'_H(r)=\frac{3Hr^{\frac12}}{(2M-r)^{\frac12}}(3M-2r).
\end{align*}
It implies $\bar{k}'_H(\frac32M)=0$, and $\bar{k}_H(r)$ is monotone on $(0,\frac32M)$ and $(\frac32M,2M)$.
Furthermore, $\bar{k}_H(r)$ and the function $p(r)=2r-3M$ intersect at $r=r_H$. (See Figure \ref{KHPNnew4}.)
That is, $3Hr_{H}^{\frac32}(2M-r_H)^{\frac12}=2r_{H}-3M$ and $k_H'(r_H)=0$, so $r_H$ is the critical point of $k_H(r)$.

\begin{figure}[h]
\centering
\psfrag{A}{$\frac32M$}
\psfrag{B}{$2M$}
\psfrag{C}{$r_H$}
\psfrag{G}{ }
\psfrag{D}{ }
\psfrag{E}{$c_{2}$}
\psfrag{t}{$t$}
\psfrag{r}{$r$}
\psfrag{F}{$\bar{k}_{H}(r)$}
\psfrag{H}{$k_{H}(r)$}
\psfrag{M}{(a) $H>0$}
\psfrag{N}{(b) $H<0$}
\includegraphics[height=130mm,width=160mm]{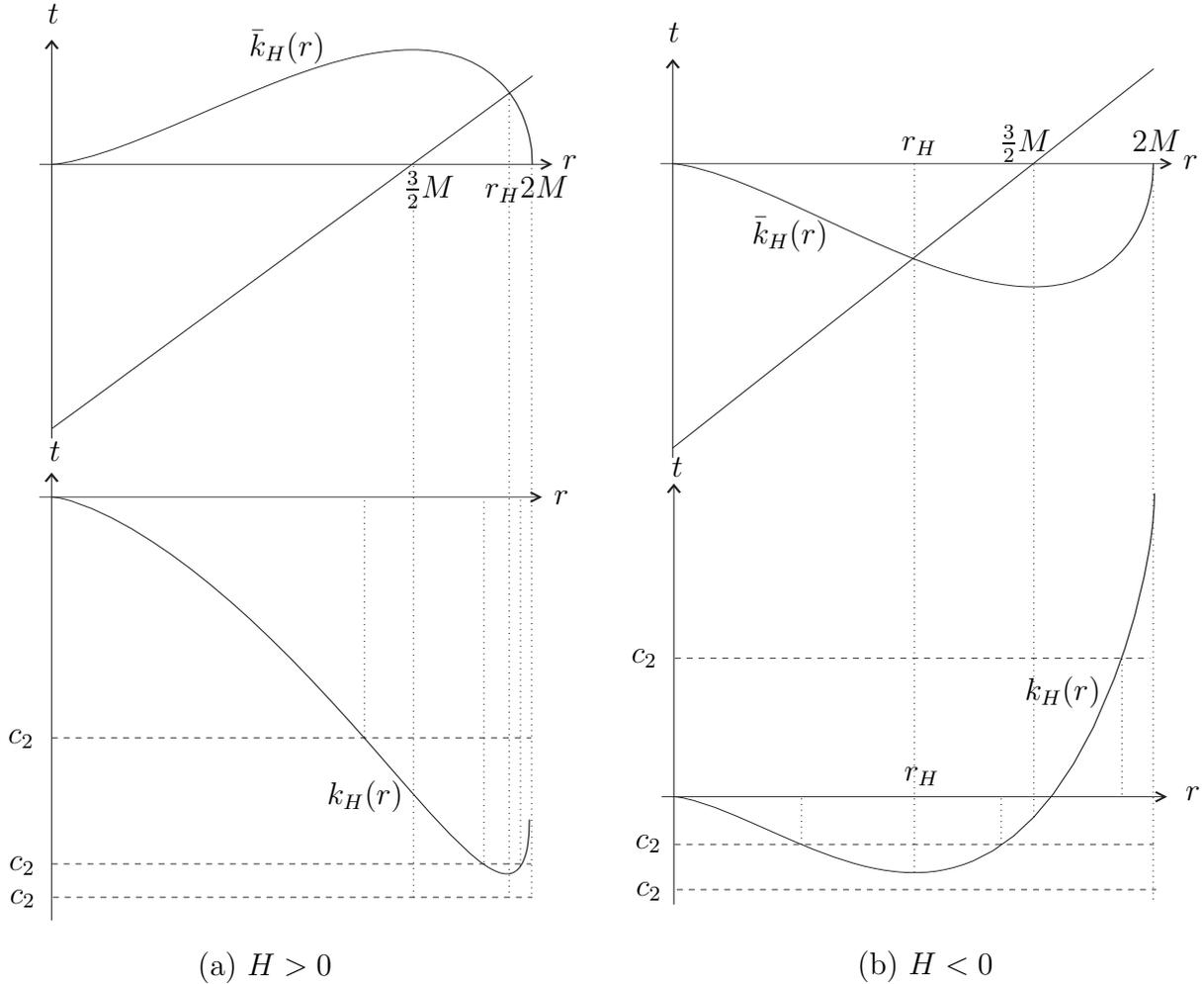}
\caption{Graphs of $\bar{k}_{H}(r)$, $p(r)=2r-3M$, $k_{H}(r)$, and horizontal lines $l(r)=c_2$.} \label{KHPNnew4}
\end{figure}
\end{proof}

\begin{prop}\label{prop8}
Denote $c_H=\min\limits_{r\in(0,2M)}k_H(r)=k_H(r_H)$,
where $k_H(r)$ is as in {\rm(\ref{k2function})},
and $r_H$ is as in Proposition \ref{prop7}.
There are three types of noncylindrical \SC hypersurfaces
$\Sigma^2=(f_2(r),r,\theta,\phi)$ according to the value of $c_2$,
where $f_2(r)=f_2^*(r;H,c_2,\bar{c}_2)$ or $f_2^{**}(r;H,c_2,\bar{c}_2')$.
\begin{itemize}
\item[\rm(a)] If $c_2<c_H$, then $f_{2}(r)$ is defined on $(0,2M)$.
\item[\rm(b)] If $c_2=c_H$, then $f_{2}(r)$ is defined on $(0,r_H)\cup(r_H,2M)$.
\item[\rm(c)] If $c_H<c_2<\max(0,-8M^3H)$,
then $f_{2}(r)$ is defined on $(0,r']$ or $[r'',2M)$ for some
$r'$ and $r''$, which depend on $H$ and $c_2$.
When we take $r_2=r_2'=r' (\mbox{or } r'')$ and $\bar{c}_2=\bar{c}_2'$
in {\rm(\ref{f2positive})} and {\rm(\ref{f2negative})},
$\Sigma^2=(f_{2}^{*}(r;H,c_2,\bar{c}_2)\cup f_2^{**}(r;H,c_2,\bar{c}_2'),r,\theta,\phi)$
is a complete \SC hypersurface in the Schwarzschild interior.
\end{itemize}
\end{prop}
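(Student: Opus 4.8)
The plan is to analyze the level set structure of $k_H(r)$ established in Proposition~\ref{prop7} and translate it into domain statements for $f_2(r)$. Recall from the previous subsection that the domain of $f_2(r)$ consists of $\{r\in(0,2M)\mid k_H(r)>c_2\}$ together with those endpoints where $k_H(r)=c_2$ but $f_2(r)$ remains finite. Since by Proposition~\ref{prop7} the function $k_H$ has a unique minimum at $r=r_H$ with value $c_H=k_H(r_H)$, and since one checks $k_H(r)\to 0$ as $r\to 0^+$ and $k_H(r)\to -8M^3H$ as $r\to 2M^-$ (directly from $k_H(r)=-Hr^3-r^{3/2}(2M-r)^{1/2}$), the superlevel set $\{k_H>c_2\}$ is determined by comparing $c_2$ with $c_H$ and with the boundary values $0$ and $-8M^3H$, i.e.\ with $\max(0,-8M^3H)$. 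For case (a), when $c_2<c_H\le k_H(r)$ for all $r$, the strict inequality holds on all of $(0,2M)$, so $f_2(r)$ is defined there. For case (b), $c_2=c_H$ forces $k_H(r)>c_2$ exactly on $(0,r_H)\cup(r_H,2M)$, while at $r=r_H$ the integrand of $f_2$ has a nonintegrable singularity — I would verify this by a local expansion of $l_2^2/(l_2^2-1)$ near $r_H$, where $l_2^2-1$ vanishes to first order (since $k_H'(r_H)=0$ but $k_H''(r_H)>0$, so $k_H(r)-c_2$ vanishes to second order and $l_2-1$ to second order, making $1/\sqrt{l_2^2-1}$ behave like $|r-r_H|^{-1}$, non-integrable) — hence $f_2$ cannot be extended to $r_H$.

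For case (c), the condition $c_H<c_2<\max(0,-8M^3H)$ means the horizontal line at height $c_2$ lies above the minimum but below at least one of the two boundary values, so by the monotonicity of $k_H$ on $(0,r_H)$ and $(r_H,2M)$ (decreasing then increasing) the superlevel set $\{k_H>c_2\}$ is a single interval of one of the forms $(0,r')$ or $(r'',2M)$, where $r'$ or $r''$ solves $k_H(r)=c_2$. At the finite endpoint $r'$ (resp. $r''$), since $k_H'$ does not vanish there, $l_2-1$ vanishes only to first order, so $1/\sqrt{l_2^2-1}\sim |r-r'|^{-1/2}$, which \emph{is} integrable; hence $f_2(r)$ extends continuously to $r'$ with finite value, and the domain is the half-closed interval $(0,r']$ (resp. $[r'',2M)$). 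This is where $f_2'(r)=\pm\infty$ at the endpoint, matching the remark in Proposition~\ref{prop6} that we allow infinite derivative.

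The completeness assertion in case (c) is the part requiring the most care. The idea is that the branch $f_2^*$ (with $f_2'>0$) and the branch $f_2^{**}$ (with $f_2'<0$) both have $r'$ in their closure with $f_2'\to+\infty$ and $f_2'\to-\infty$ respectively, i.e.\ the curves in the $(r,t)$-plane become vertical (tangent to the $t=\mathrm{const}$ direction, which is spacelike in region~{\tt I\!I}) as $r\to r'$. By choosing $r_2=r_2'=r'$ and matching the constants $\bar c_2=\bar c_2'$, the two arcs join at the single point $(f_2(r'),r')$ to form one $C^1$ (indeed smooth, by symmetry of the ODE under the branch swap) curve that is a graph over $t$ near that point. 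I would then argue completeness by showing the induced metric's arclength diverges as $r\to 0^+$ (resp.\ $r\to 2M^-$) along the glued curve: near $r=0$ the factor $1/r^2$ in the spatial metric and the behavior of $f_2'$ make the integral $\int \sqrt{|{-h}(f_2')^2 + 1/(-h)|\,}$-type length element diverge, and similarly near $r=2M$ the coordinate singularity is handled exactly as in Proposition~\ref{asym} by passing to Kruskal coordinates, so the curve runs off to the boundary of the spacetime (or to $r=2M$, which is a genuine boundary of region~{\tt I\!I} inside Kruskal) in infinite proper length. The main obstacle is precisely this last step: verifying that the glued curve is smooth at the turning point $r'$ and geodesically complete, which requires the careful local expansions at $r'$, at $r=0$, and at $r=2M$, rather than any single clean estimate.
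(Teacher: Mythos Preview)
Your approach is essentially the paper's: both determine the domain from the superlevel set $\{k_H>c_2\}$ and then analyze the order of vanishing of $l_2^2-1$ (equivalently of $(-Hr^3-c_2)^2+r^3(r-2M)$) at an interior endpoint, using that $k_H'(r_H)=0$ gives a quadratic zero in case~(b) (hence a nonintegrable $|r-r_H|^{-1}$ singularity) while $k_H'\neq0$ at $r',r''$ gives a simple zero in case~(c) (hence an integrable $|r-r'|^{-1/2}$ singularity). Two small corrections: in case~(c) the superlevel set need not be a \emph{single} interval---when $c_2<\min(0,-8M^3H)$ the line meets the graph of $k_H$ twice and $\{k_H>c_2\}=(0,r')\cup(r'',2M)$, so your local analysis applies separately on each component (this is how the paper reads the ``or''); and your final paragraph does more than the proposition asks---the paper does not argue Riemannian completeness or smoothness at the turning point here, deferring $C^\infty$-smoothness of the glued $\Sigma^2$ to the next proposition and using ``complete'' only in the sense that the joined curve has no boundary inside the Schwarzschild interior.
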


\begin{proof}\mbox{}
\begin{itemize}
\item[(a)] If $c_{2}<c_H$,
then $l_2(r)>1$ for all $r\in(0,2M)$, which implies $f_2(r)$ is defined on $(0,2M)$.
\item[(b)] If $c_{2}=c_H$, then $f_{2}(r)$ is defined on $(0,r_H)\cup(r_H,2M)$.
We need to check the behavior of $f_{2}(r)$ as $r \to r_H$.
First, we know $\lim\limits_{r\to r_H}f_{2}'(r)=\infty$ or $-\infty$ because $l_2(r)=1$.
Next, noting that $c_2=-Hr_H^3-r_H^{\frac32}(2M-r_H)^{\frac12}$ and
\begin{align*}
f_{2}'(r)
=\frac{-Hr^3-c_{2}}{h(r)\sqrt{(-Hr^3-c_{2})^2+r^3(r-2M)}}\quad\mbox{or}\quad
\frac{-Hr^3-c_{2}}{-h(r)\sqrt{(-Hr^3-c_{2})^2+r^3(r-2M)}},
\end{align*}
we expand $(-Hr^3-c_{2})^2+r^3(r-2M)$ in the power of $(r-r_H)$ to attain
\begin{align*}
&\sqrt{(-Hr^3-c_{2})^2+r^3(r-2M)} \\
=&\sqrt{P_1(r;r_H)(r-r_H)^2+2r_H^2\left(-3Hr_H^{\frac32}(2M-r_H)^{\frac12}+2r_H-3M\right)(r-r_H)} \\
=&\sqrt{P_1(r;r_H)(r-r_H)^2},
\end{align*}
where $P_1(r;r_H)$ is a polynomial.
The last equality holds because $r_H$ is the critical point of $k_H(r)$.
Thus $f_{2}'(r)\sim O((r-r_H)^{-1})$ and $\lim\limits_{r\to r_H}f_{2}(r)=\infty$ or $-\infty$.
That is, the domain of $f_{2}(r)$ is $(0,r_H)\cup (r_H,2M)$.

\item[(c)] If $c_H<c_{2}<\max(0,-8M^3H)$,
then $f_{2}(r)$ is defined on $(0,r')$ or $(r'',2M)$.
Here we only discuss the case at $r'$ because the case at $r''$ is similar.
First, we know $\lim\limits_{r\to r'}f_{2}'(r)=\infty$ or $-\infty$.
Next, since $c_2=-H(r')^3-(r')^{\frac32}(2M-r')^{\frac12}$ is not a critical value of $k_H(r)$,
 the expansion of $(-Hr^3-c_{2})^2+r^3(r-2M)$ in the power of $(r-r')$ becomes
\begin{align*}
&\sqrt{(-Hr^3-c_{2})^2+r^3(r-2M)} \\
=&\sqrt{P_2(r;r')(r-r')^2
+2(r')^2\left(-3H(r')^{\frac32}(2M-r')^{\frac12}+2r'-3M\right)(r-r')},
\end{align*}
where $P_2(r;r')$ is a polynomial, and $-3H(r')^{\frac32}(2M-r')^{\frac12}+2r'-3M\neq 0$.
It implies $f_{2}'(r)\sim O((r-r')^{-\frac12})$,
and $\lim\limits_{r\to r'}f_2(r)$ is a finite value.
Domain of $f_{2}(r)$ can be extended to $r=r'$.
When taking $r_2=r_2'=r'$ and $\bar{c}_2=\bar{c}_2'$,
we have $f_{2}^{*}(r';H,c_2,\bar{c}_2)=f_2^{**}(r';H,c_2,\bar{c}_2')=\bar{c}_2$ and
$\Sigma^2=(f_{2}^{*}(r;H,c_2,\bar{c}_2)\cup f_2^{**}(r;H,c_2,\bar{c}_2'),r,\theta,\phi)$
is a complete \SC hypersurface in the Schwarzschild interior.
\end{itemize}
\end{proof}

\begin{prop}\label{proposition9}
In case {\rm(c)} of Proposition \ref{prop8}, the \SC hypersurface $\Sigma^2$ is $C^\infty$.
\end{prop}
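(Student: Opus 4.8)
The only place where smoothness of $\Sigma^2$ is not already evident is the junction point $r=r'$ (the case of $[r'',2M)$ being entirely analogous, with $r''$ in place of $r'$): away from $r=r'$ each of $f_2^*$ and $f_2^{**}$ is a primitive of a smooth integrand, since $l_2>1$ strictly and $h(r)\ne 0$ on $0<r<2M$, and the Schwarzschild metric \eqref{scmc2eq1} is smooth there; so $\Sigma^2$ is $C^\infty$ (even real-analytic) off the junction. At $r=r'$ we have $l_2(r')=1$, hence $f_2'\to\pm\infty$ and $r$ ceases to be a good parameter. The plan is therefore to exhibit a smooth parametrization of $\Sigma^2$ in a neighbourhood of the junction by replacing $r$ with a new parameter that resolves the vertical tangent of the curve $(f_2(r),r)$ in the $(t,r)$–plane.

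First I would use the form of $f_2'$ obtained in the proof of Proposition~\ref{prop8}(c),
\[
f_2'(r)=\frac{-Hr^3-c_2}{\pm\,h(r)\sqrt{(-Hr^3-c_2)^2+r^3(r-2M)}},
\]
the $+$ sign giving $(f_2^{**})'$ and the $-$ sign giving $(f_2^{*})'$. Because we are in case (c) we have $c_2\ne c_H$, hence $r'\ne r_H$, so $r'$ is not a critical point of $k_H$; as noted in Proposition~\ref{prop8}(c) this means the polynomial $(-Hr^3-c_2)^2+r^3(r-2M)$ has a \emph{simple} zero at $r=r'$. Writing it as $(r'-r)\tilde Q(r)$, the factor $\tilde Q$ is a polynomial; the requirement that $l_2^2-1\ge 0$ on the domain $(0,r']$ forces $\tilde Q(r')>0$ (equivalently $k_H'(r')<0$), and moreover $-Hr'^3-c_2=r'^{3/2}(2M-r')^{1/2}>0$. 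Hence, near $r=r'$,
\[
(f_2^{*})'(r)=\frac{-Hr^3-c_2}{-h(r)\sqrt{\tilde Q(r)}}\cdot\frac1{\sqrt{r'-r}},\qquad
(f_2^{**})'(r)=\frac{-Hr^3-c_2}{h(r)\sqrt{\tilde Q(r)}}\cdot\frac1{\sqrt{r'-r}},
\]
with $\sqrt{\tilde Q}$ smooth and positive and the isolated singularity carried by $1/\sqrt{r'-r}$.

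Next I would introduce the parameter $\tau$ by $r(\tau)=r'-\tau^2$, using the branch $f_2^{*}$ for $\tau\ge 0$ and $f_2^{**}$ for $\tau\le 0$ (having set $r_2=r_2'=r'$, $\bar c_2=\bar c_2'$ as in Proposition~\ref{prop8}(c), so that $t(0)=\bar c_2$). Computing $dt/d\tau$ by the chain rule and using $\sqrt{r'-r}=|\tau|$, the factor $-2\tau$ from differentiating $r'-\tau^2$ cancels the $1/\sqrt{r'-r}$, and one finds on \emph{both} sides of $\tau=0$ the single expression
\[
\frac{dt}{d\tau}=\frac{-2\,(-H(r'-\tau^2)^3-c_2)}{-h(r'-\tau^2)\,\sqrt{\tilde Q(r'-\tau^2)}},
\]
which is a smooth (indeed analytic) even function of $\tau$ near $0$ and is nonzero at $\tau=0$. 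Hence $t(\tau)=\bar c_2+\int_0^\tau (dt/ds)\,ds$ is $C^\infty$ near $0$, so $\tau\mapsto(t(\tau),r(\tau),\theta,\phi)$ is a smooth immersed parametrization of $\Sigma^2$ near the junction; its $(t,r)$–velocity at $\tau=0$ is a nonzero multiple of $\partial_t$, which is spacelike in region {\tt I\!I}, so $\Sigma^2$ remains spacelike there. By the inverse function theorem $\tau$, hence $r=r'-\tau(t)^2$, is a smooth function of $t$, so near the junction $\Sigma^2$ is a smooth graph $r=g(t)$. Combined with the smoothness of $\Sigma^2$ away from $r=r'$, this shows $\Sigma^2$ is $C^\infty$.

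I expect the main obstacle to be the sign bookkeeping in the middle step: one must track the three competing signs ($h<0$, $-Hr^3-c_2>0$, and $\sqrt{r'-r}=|\tau|$, which flips with the branch) to confirm that the two one-sided formulas for $dt/d\tau$ are literally the same even function of $\tau$. The algebraic input that makes this cancellation possible is precisely the simplicity of the zero of $(-Hr^3-c_2)^2+r^3(r-2M)$ at $r'$, i.e. $r'\ne r_H$, which is exactly where the hypothesis of case (c) is used.
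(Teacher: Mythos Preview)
Your argument is correct, and it takes a genuinely different route from the paper's. The paper works directly with the inverse function $r=g(t)$ and computes its higher derivatives on each side of $t=\bar c_2$: by induction it asserts that $g^{(2k+1)}(t)=\sum_{i=0}^kA_{k,i}(l_2^2-1)^{i+1/2}$ and $g^{(2k)}(t)=\sum_{i=0}^kB_{k,i}(l_2^2-1)^{i}$, with a sign $(-1)^m$ on the $f_2^{**}$ side, and then observes that since $l_2^2-1\to 0$ at the junction, all odd derivatives vanish and all even derivatives match. Your approach instead resolves the branch-point singularity with the substitution $r=r'-\tau^2$, checks that the sign bookkeeping makes $dt/d\tau$ a single smooth even function of $\tau$ with $dt/d\tau(0)\ne 0$, and then invokes the inverse function theorem to get $r=g(t)$ smooth. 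The paper's method is more hands-on and makes the structure of the derivatives explicit (useful if one later wants quantitative information), but the inductive form of $g^{(m)}$ is only stated, not proved. Your method is cleaner and more conceptual: it isolates the geometric reason for smoothness---the simple zero of $(-Hr^3-c_2)^2+r^3(r-2M)$ at $r'$, equivalently $k_H'(r')\ne 0$---and reduces the whole question to a single nondegeneracy check, with analyticity of $\Sigma^2$ coming for free.
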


\begin{proof}
It suffices to check the smoothness of $\Sigma^2$ at the joint point,
and here we show the case of $r_2=r_2'=r'$. The case of $r_2=r_2'=r''$ is similar.
Noting that $\bar{c}_2=\bar{c}_2'$ and $r<r'$,
we have $f^*(r)\leq\bar{c}_2$, $f^{**}(r)\geq\bar{c}_2$, and $f^*(r')=f^{**}(r')=\bar{c}_2$.
Hence when rewrite the surface as a graph of $r=g(t)$, we have $g(\bar{c}_2)=r'$ and
its inverse corresponds to  $t=f_2^*(r)$ for $t\leq\bar{c}_2$
and to $t=f_2^{**}(r)$ for $t\geq\bar{c}_2$.
Direct computation gives
\begin{align*}
g^{(2k+1)}(t)=&\left\{
\begin{array}{ll}
\sum_{i=0}^kA_{k,i}(l_2^2-1)^{i+\frac12} & \mbox{ if } t<\bar{c}_2\\
(-1)^{2k+1}\sum_{i=0}^kA_{k,i}(l_2^2-1)^{i+\frac12} & \mbox{ if } t>\bar{c}_2,
\end{array}\right. \\
g^{(2k)}(t)=&\left\{
\begin{array}{ll}
\sum_{i=0}^kB_{k,i}(l_2^2-1)^{i} & \mbox{ if } t<\bar{c}_2\\
(-1)^{2k}\sum_{i=0}^kB_{k,i}(l_2^2-1)^{i} & \mbox{ if } t>\bar{c}_2,
\end{array}\right.
\end{align*}
where $A_{k,i}$ and $B_{k,i}$ are functions of $h, l_2$ and their derivatives with respective to $r$.
As $t\to\bar{c}_2$, we have $r\to r'$ and $\lim\limits_{r\to r'}l_2^2-1=0$, it implies that
\begin{align*}
\lim_{t\to\bar{c}_2^-}g^{(2k+1)}(t)=\lim_{t\to\bar{c}_2^+}g^{(2k+1)}(t)=0\quad\mbox{and}\quad
\lim_{t\to\bar{c}_2^-}g^{(2k)}(t)=\lim_{t\to\bar{c}_2^+}g^{(2k)}(t)=B_{k,0}.
\end{align*}
Hence $\Sigma^2$ is smooth.
\end{proof}

\subsection{Asymptotic behavior of SS-CMC solutions in region {\tt I\!I}}
Next, we discuss the asymptotic behavior of \SC hypersurfaces in Schwarzschild interior that will be needed
in section~\ref{section5}.
\begin{prop}\label{asym2}
For a \SC hypersurface $\Sigma^{2}=(f_{2}(r;H,c_{2},\bar{c}_2),r,\theta,\phi)$
in Schwarzschild interior with $c_{2}<-8M^3H$, we have
$f_{2}'(r)$ is of order $O((2M-r)^{-1})$  as $r\to 2M^-$.
It implies that $\lim\limits_{r\to 2M^-}f_{2}(r)=\infty$ or $-\infty$,
and the curve $(f_{2}(r),r)$ in $(t,r)$ plane is bounded by two null geodesics as $r\to 2M^-$.
Furthermore, the spacelike condition is preserved as $r\to 2M^-$.
\end{prop}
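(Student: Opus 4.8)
The plan is to follow the exterior analysis of Proposition~\ref{asym}, the key new observation being that the hypothesis $c_2<-8M^3H$ forces $l_2(r)\to\infty$ as $r\to 2M^-$. First I would rewrite
\[
l_2(r)=\sqrt{\frac{r}{2M-r}}\cdot\frac{-Hr^3-c_2}{r^2},
\]
and note that the factor $-Hr^3-c_2$ tends to $-8M^3H-c_2>0$ while $\sqrt{r/(2M-r)}\to\infty$; hence $l_2(r)\to+\infty$, and in particular $l_2(r)>1$ on a left neighbourhood of $r=2M$, so the formulas of Proposition~\ref{prop6} genuinely apply there. Since then $\sqrt{l_2^2/(l_2^2-1)}\to 1$ and $\frac{1}{-h(r)}=\frac{r}{2M-r}$, the formula $f_2'(r)=\pm\frac{1}{-h(r)}\sqrt{l_2^2/(l_2^2-1)}$ shows that $f_2'(r)$ is of order $O((2M-r)^{-1})$; more precisely $|f_2'(r)|\ge\frac{1}{-h(r)}\ge\frac{M}{2M-r}$ near $r=2M$, and integrating this lower bound forces $\lim_{r\to 2M^-}f_2(r)=+\infty$ when $f_2'>0$ and $-\infty$ when $f_2'<0$.

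For the null-geodesic bound I would use the elementary estimate $1\le\sqrt{1+x}\le 1+\frac{x}{2}$ with $x=1/(l_2^2-1)>0$, just as in~(\ref{typeIineq}). Treating the case $f_2'>0$ (the case $f_2'<0$ being identical up to a sign), on an interval $(r_2,2M)$ with $r_2$ fixed near $2M$ this yields
\[
\frac{1}{-h(r)}\ \le\ f_2'(r)\ \le\ \frac{1}{-h(r)}+\frac{1}{2}\cdot\frac{1}{-h(r)}\cdot\frac{1}{l_2^2(r)-1},
\]
and a short computation identifies the correction term as $\frac{1}{2}\cdot\frac{r^4}{(-Hr^3-c_2)^2-r^3(2M-r)}$, which is continuous and bounded near $r=2M$ precisely because $c_2\neq -8M^3H$. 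Using $\int\frac{1}{-h}\,\mathrm{d}r=-(r+2M\ln|r-2M|)$ and integrating the double inequality from $r_2$ to $r$, I obtain constants $C'\le C$ with
\[
C'\ \le\ f_2(r)+(r+2M\ln|r-2M|)\ \le\ C
\]
near $r=2M$, so the curve $t=f_2(r)$ lies between the two null geodesics $v=C'$ and $v=C$; in the case $f_2'<0$ one similarly gets confinement between two null geodesics of the family $u=\mbox{constant}$.

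Finally, the spacelike condition follows by direct computation. From $f_2'=\pm\frac{1}{h}\sqrt{l_2^2/(l_2^2-1)}$ one finds
\[
\langle\nabla F,\nabla F\rangle=-\frac{1}{h(r)}+h(r)\,(f_2'(r))^2=\frac{1}{h(r)}\left(\frac{l_2^2}{l_2^2-1}-1\right)=\frac{1}{h(r)\,(l_2^2(r)-1)},
\]
which is negative since $h(r)<0$ and $l_2^2-1>0$, and which tends to the finite negative value $-16M^4/(-8M^3H-c_2)^2$ as $r\to 2M^-$; thus the spacelike condition is preserved (indeed strictly) in the limit. The only step requiring genuine care is verifying that $\frac{1}{-h(r)}\cdot\frac{1}{l_2^2(r)-1}$ stays bounded on a left neighbourhood of $2M$, but this is immediate from the explicit rational expression above together with $c_2\neq -8M^3H$; everything else is bookkeeping parallel to the exterior case already treated in Proposition~\ref{asym}.
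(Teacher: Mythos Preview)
Your proof is correct and follows essentially the same approach as the paper: establish $l_2\to\infty$ from $c_2<-8M^3H$, use $|f_2'|\ge\frac{1}{-h}$ for the lower bound, and bound the excess $\sqrt{l_2^2/(l_2^2-1)}-1$ to sandwich $f_2$ between two null geodesics. Your use of the inequality $\sqrt{1+x}\le 1+\tfrac{x}{2}$ with $x=1/(l_2^2-1)$ is in fact a bit cleaner than the paper's route, which instead Taylor-expands $\sqrt{1-1/l_2^2}$, compares with a geometric series to obtain $\sqrt{1-1/l_2^2}\ge 1-\frac{1}{l_2^2-1}$, and then introduces an auxiliary constant $C_6$ to reach $f_2'\le\frac{1}{-h}+C_6$; both arguments yield the same conclusion.
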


\begin{proof}
Since $c_2<-8M^3H$, $f_2(r)$ is defined on $(2M-\delta_3,2M)$ for some $\delta_3>0$.
We only need to consider the case $f_2'(r)>0$ because of symmetry.
On one hand, since
\begin{align*}
f_2'(r)=\frac1{(-h)\sqrt{1-\frac{r^3(2M-r)}{(-Hr^3-c_{2})^2}}}\geq\frac1{-h},
\end{align*}
we have
\begin{align*}
&\int_{r_2}^rf_2'(x)\mbox{d}x
\geq-(x+2M\ln(2M-x))|_{x=r_2}^{x=r} \\
\Rightarrow f_2(r)\geq&f_2(r_2)+(r_2+2M\ln(2M-r_2)-(r+2M\ln(2M-r)) \\
=&-(r+2M\ln(2M-r))+C_5,
\end{align*}
where $C_5=f_2(r_2)+r_2+2M\ln(2M-r_2)$.
The curve $t=f_2(r)$ is bounded below by the null geodesic $t+(r+2M\ln(2M-r))=C_5$ near $r=2M$.

On the other hand, because $\frac1{l_{2}^2}$ is very small  near $r=2M,$
by Taylor's expansion we get
\begin{align*}
\sqrt{1-\frac1{l_{2}^2}}
&\approx1-\frac12\left(\frac1{l_{2}^2}\right)-\frac18\left(\frac1{l_{2}^2}\right)^2-\cdots
\geq1-\left(\frac1{l_{2}^2}\right)-\left(\frac1{l_{2}^2}\right)^2-\cdots \\
&=1-\frac1{l_{2}^2-1}=1-\frac{-h}{\left(-Hr-\frac{c_{2}}{r^2}\right)^2-(-h)}
\end{align*}
There is a constant $C_6>0$ such that $C_6\left(\left(-Hr-\frac{c_{2}}{r^2}\right)^2-2(-h)\right)>1$ on
$(2M-\delta_4,2M)$, a subset of $(2M-\delta_3,2M)$. That is, we have
\begin{align*}
\frac1{\left(-Hr-\frac{c_{2}}{r^2}\right)^2-(-h)}<\frac{C_6}{1+C_6(-h)}
\end{align*}
and
\begin{align*}
\sqrt{1-\frac1{l_{2}^2}}
\geq1-\frac{C_6(-h)}{1+C_6(-h)}
=\frac1{1+C_6(-h)}.
\end{align*}
Thus
\begin{align*}
f_2'(r)&=\frac1{(-h)\sqrt{1-\frac1{l_{2}^2}}}
\leq\frac1{(-h)}(1+C_6(-h))=\frac1{-h}+C_6,
\end{align*}
which integrates to
\begin{align*}
\int_{r_2}^r&f_2'(x)\mbox{d}x\leq\int_{r_2}^r\left(\frac1{-h(x)}+C_6\right)\mbox{d}x.
\end{align*}
Hence
\begin{align*}
f_2(r)&\leq f_2(r_2)-(x+2M\ln(2M-r))|_{x=r_2}^{x=r}+C_6(r-r_2) \\
&=-(r+2M\ln(2M-r))+C_5+C_7,
\end{align*}
where $C_5=f_2(r_2)+(r_2+2M\ln(2M-r_2))$ and $C_7=C_6(2M-r_2)$.
The curve $t=f_2(r)$ is bounded above by the null geodesic
$t+(r+2M\ln(2M-r))=C_5+C_7$ near $r=2M$.

Spacelike property can be extended at $r=2M$ because
\begin{align*}
\lim_{r\to 2M^-}\langle\nabla F,\nabla F\rangle
=\lim_{r\to 2M^-}\frac{-1}{\left(-Hr-\frac{c_{2}}{r^2}\right)^2+h(r)}
=\frac{-1}{\left(-2MH-\frac{c_{2}}{4M^2}\right)^2}<0.
\end{align*}
\end{proof}

Figure \ref{CMCinterior} pictures \SC hypersurfaces in Schwarzschild interior and their images in region
{\tt I\!I} of Kruskal extension.

\begin{figure}[!h]
\centering
\psfrag{A}{$r=0$}
\psfrag{B}{$r=r_H$}
\psfrag{C}{$r=2M$}
\psfrag{L}{$c_2>c_H$}
\psfrag{M}{$c_2=c_H$}
\psfrag{N}{$c_2<c_H$}
\psfrag{P}{$2M$}
\psfrag{Q}{$r_H$}
\hspace*{-10mm}
\includegraphics[height=85mm,width=143mm]{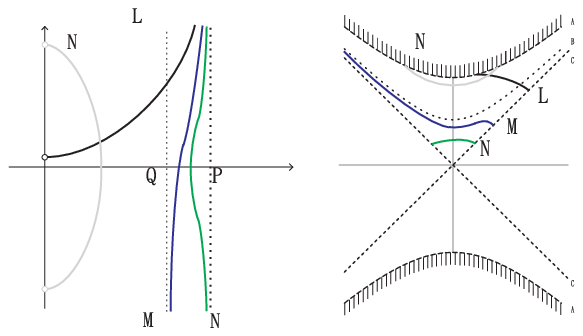}
\caption{\SC hypersurfaces in Schwarzschild interior and region {\tt I\!I}.} \label{CMCinterior}
\end{figure}

\section{SS-CMC Solutions in region {\tt I'} and {\tt I\!I'}}\label{section4}
When the Schwarzschild exterior and interior map to region {\tt I} and {\tt I\!I} in the Kruskal extension,
future directed timelike directions are directions of increasing $t$ and decreasing $r$, respectively.
However, when the Schwarzschild exterior and interior map to  region {\tt I'} and {\tt I\!I'},
future directed timelike directions are directions of decreasing $t$ and increasing $r$, respectively.
Therefore, we need to modify the discussions in section \ref{section2} and 4
according to these differences for the \SC solutions in region {\tt I'} and {\tt I\!I'}.

The constant mean curvature equation of a \SC hypersurface
$\Sigma^{3}=(f_{3}(r),r,\theta,\phi)$ which maps to region {\tt I'} of the Kruskal extension is
\begin{align}
f_{3}''
+\left(\left(\frac1{h}-(f_{3}')^2h\right)\left(\frac{2h}{r}+\frac{h'}2\right)
+\frac{h'}{h}\right)f_{3}'
+3H\left(\frac1{h}-(f_{3}')^2h\right)^{\frac32}=0.
\label{cmceq4}
\end{align}
Solutions to the equation (\ref{cmceq4}) would be
\begin{align*}
&f_{3}'(r;H,c_{3})=\frac{l_{3}(r;H,c_3)}{h(r)}\sqrt{\frac{1}{1+l_{3}^2(r;H,c_3)}},\quad\mbox{where}\quad
l_{3}(r;H,c_3)=\frac{1}{\sqrt{h(r)}}\left(-Hr-\frac{c_{3}}{r^2}\right),
\end{align*}
and
\begin{align}
f_{3}(r;H,c_{3},\bar{c}_3)
=\int_{r_3}^r\frac{l_{3}(r;H,c_{3})}{h(r)}\sqrt{\frac1{1+l_{3}^2(r;H,c_{3})}}\mathrm{d}r+\bar{c}_3,
\label{forumla3}
\end{align}
where $c_3$ and $\bar{c}_3$ are constants, and $r_3\in(2M,\infty)$ is fixed.
We remark that for given constant mean curvature
 $H$, if $c_1=c_3$, then $f_1'(r)=-f_3'(r)$.

Similarly, when we consider \SC hypersurfaces in another Schwarzschild interior that maps to
region {\tt I\!I'} of the Kruskal extension,  each constant slice $r=r_0, r_0\in(0,2M)$ are \SC solutions.

Moreover, given a \SC hypersurfaces $\Sigma^{4}=(f_{4}(r),r,\theta,\phi)$ which maps to region {\tt I\!I'},
the constant mean curvature equation of $f_{4}(r)$ is
\begin{align}\left\{
\begin{array}{ll}
\displaystyle f_{4}''
+\left(\left(\frac1h-(f_{4}')^2h\right)\left(\frac{2h}{r}+\frac{h'}2\right)
+\frac{h'}{h}\right)f_{4}'
+3H\left(\frac1h-(f_{4}')^2h\right)^{\frac32}=0 & \mbox{if } f_{4}'(r)>0  \\
\displaystyle f_{4}''
+\left(\left(\frac1h-(f_{4}')^2h\right)\left(\frac{2h}{r}+\frac{h'}2\right)
+\frac{h'}{h}\right)f_{4}'
-3H\left(\frac1h-(f_{4}')^2h\right)^{\frac32}=0 & \mbox{if } f_{4}'(r)<0.
\end{array}\right.\label{CMCeq31}
\end{align}
We can solve (\ref{CMCeq31}) to get
\begin{align*}
f_{4}'(r)=\left\{
\begin{array}{ll}
\displaystyle\frac{1}{-h}\sqrt{\frac{l_{4}^2}{l_{4}^2-1}}, & \mbox{if } f_{4}'(r)>0 \\
\displaystyle\frac{1}{h}\sqrt{\frac{l_{4}^2}{l_{4}^2-1}},  & \mbox{if } f_{4}'(r)<0,
\end{array}\right.
\quad\mbox{where}\quad
l_{4}(r;H,c_{4})=\frac1{\sqrt{-h(r)}}\left(Hr+\frac{c_{4}}{r^2}\right).
\end{align*}
The integration of $f_4'(r)$ gives
\begin{align}
&f_{4}^{*}(r;H,c_{4},\bar{c}_4)
=\int_{r_4}^r\frac{1}{-h(r)}\sqrt{\frac{l^2_{4}(r;H,c_{4})}{l_{4}^2(r;H,c_{4})-1}}\mathrm{d}r+\bar{c}_4,
\quad\mbox{or} \label{f4positive}\\
&f_{4}^{**}(r;H,c_{4},\bar{c}_4')
=\int_{r_4'}^r\frac{1}{h(r)}\sqrt{\frac{l^2_{4}(r;H,c_{4})}{l_{4}^2(r;H,c_{4})-1}}\mathrm{d}r+\bar{c}_4'
\label{f4negative}
\end{align}
according to the sign of $f_4'(r)$, where $c_{4}, \bar{c}_4, \bar{c}_4'$ are constants,
and $r_4, r_4'$ are fixed numbers in the domain of $f_4^{*}(r)$ and $f_4^{**}(r)$, respectively.
The function $l_{4}(r)$ should satisfy $l_{4}(r)>1$,
which implies $c_4>-8M^3H$ when $H>0$ and $c_4>0$ when $H<0$.
In addition, we allow $f_{4}'(r)=\pm\infty$ at some point.

In this article, when we write $f_4(r)$, it means both $f_4^{*}(r)$ and $f_4^{**}(r)$.

The condition $l_{4}(r)>1$ will put restrictions on the domain of $f_4(r)$. We have
\begin{align*}
l_4(r)=\frac1{\sqrt{-h(r)}}\left(Hr+\frac{c_4}{r^2}\right)>1 \Rightarrow -Hr^3+r^{\frac32}(2M-r)^{\frac12}<c_{4}.
\end{align*}
Define
\begin{align}
\tilde{k}_{H}(r)=-Hr^3+r^{\frac32}(2M-r)^{\frac12}.\label{ktilde}
\end{align}
Then the domain of $f_4(r)$ will be
\begin{align*}
\{r\in(0,2M)|\tilde{k}_{H}(r)<c_{4}\}
\cup\{r\in(0,2M)|\tilde{k}_{H}(r)=c_{4} \mbox{ and } f_{4}(r) \mbox{ is finite}\}.
\end{align*}
By similar arguments as in Proposition \ref{prop7},
we can analyze $\tilde{k}_{H}(r)$ and illustrate its graph according to the sign of $H$ in Figure \ref{KHPNnew3}.
Our conclusion is
\begin{prop} \label{proposition11}
Consider $\tilde{k}_{H}(r)$ as in {\rm(\ref{ktilde})}, then $\tilde{k}_H(r)$ has a unique maximum point at $r=R_H$,
where $R_H$ is determined by $-3HR_{H}^{\frac32}(2M-R_H)^{\frac12}=2R_H-3M$.
\end{prop}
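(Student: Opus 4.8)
The plan is to mirror the proof of Proposition~\ref{prop7} almost verbatim, replacing the function $k_H(r)=-Hr^3-r^{3/2}(2M-r)^{1/2}$ by $\tilde k_H(r)=-Hr^3+r^{3/2}(2M-r)^{1/2}$ and tracking the sign change in the square-root term. First I would differentiate $\tilde k_H$ on $(0,2M)$ to obtain
\begin{align*}
\tilde k_H'(r)=-3Hr^2+\frac{r^{1/2}}{2(2M-r)^{1/2}}\bigl(3M-2r\bigr)
=\frac{r^{1/2}}{(2M-r)^{1/2}}\Bigl(-3Hr^{3/2}(2M-r)^{1/2}+\frac{3M-2r}{2}\Bigr),
\end{align*}
so that, up to a positive factor $r^{1/2}/(2M-r)^{1/2}$, the sign of $\tilde k_H'(r)$ is that of $-3Hr^{3/2}(2M-r)^{1/2}-(2r-3M)$; hence critical points are exactly the solutions of $-3HR_H^{3/2}(2M-R_H)^{1/2}=2R_H-3M$, which is the claimed characterization of $R_H$.

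Next I would establish existence and uniqueness of such an $R_H$ and that it is a maximum, following the Figure~\ref{KHPNnew3} picture. Set $\bar k_H(r)=-3Hr^{3/2}(2M-r)^{1/2}$ (note the sign flip relative to the $\bar k_H$ of Proposition~\ref{prop7}) and $p(r)=2r-3M$. Then $\bar k_H'(r)=-3Hr^{1/2}(3M-2r)/\bigl(2(2M-r)^{1/2}\bigr)$ vanishes only at $r=\tfrac32 M$, so $\bar k_H$ is monotone on $(0,\tfrac32M)$ and on $(\tfrac32M,2M)$, with $\bar k_H\to 0$ at both endpoints $r\to 0^+$ and $r\to 2M^-$. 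The line $p(r)$ is strictly increasing from $-3M$ to $M$, with $p(\tfrac32M)=0$. Comparing boundary values and monotonicity on each subinterval, one checks that $\bar k_H$ and $p$ meet at exactly one point $R_H\in(0,2M)$: on $(0,\tfrac32M)$ one has $p<0$ while $\bar k_H$ at the left end dominates (or does not) depending on $\mathrm{sgn}\,H$, and the two subcases $H>0$, $H<0$ are handled exactly as in the cited figure. At $R_H$ the difference $\bar k_H(r)-p(r)$ changes from $+$ to $-$ (equivalently $\tilde k_H'$ changes from $+$ to $-$), so $R_H$ is the unique interior maximum of $\tilde k_H$; the boundary behavior $\tilde k_H(r)\to 0$ as $r\to 0^+$ and $r\to 2M^-$ (for $H$ of either sign the cubic term vanishes at $0$ and the square-root term vanishes at $2M$) confirms no competing extrema appear at the ends.

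I expect the only genuinely delicate point to be the case analysis showing that $\bar k_H$ and $p$ intersect exactly once, i.e.\ that no second crossing occurs on the subinterval where both are monotone in the same direction; this is where one must use the precise endpoint values ($\bar k_H(0^+)=\bar k_H(2M^-)=0$, $p(0)=-3M<0<p(2M)=M$) together with the sign of $H$, just as encoded in Figure~\ref{KHPNnew3}(a)--(b). Since this is entirely parallel to the argument already carried out in Proposition~\ref{prop7}, I would present it briefly and refer to that proof and to Figure~\ref{KHPNnew3} for the remaining routine verification.
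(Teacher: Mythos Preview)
Your approach is exactly the one the paper intends: it simply says ``By similar arguments as in Proposition~\ref{prop7}'' and refers to Figure~\ref{KHPNnew3}, so mirroring that proof with the sign flip is precisely right, and your sign analysis of $\bar k_H-p$ correctly yields a unique crossing and a maximum.

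Two small computational slips to fix. First, the derivative of $r^{3/2}(2M-r)^{1/2}$ is $\dfrac{r^{1/2}(3M-2r)}{(2M-r)^{1/2}}$, with no factor $\tfrac12$; your displayed formula for $\tilde k_H'$ carries an extra $\tfrac12$, which would give the wrong critical-point equation (you then state the correct one, so the text is internally inconsistent). Second, $\tilde k_H(r)\to -8M^3H$, not $0$, as $r\to 2M^-$; this does not affect the argument, since uniqueness of the maximum comes from the sign change of $\tilde k_H'$, but the sentence about boundary behavior should be corrected.
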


\begin{figure}[h]
\centering
\psfrag{A}{$\frac32M$}
\psfrag{B}{\hspace*{1mm}$2M$}
\psfrag{C}{\hspace*{-1mm}$R_H$}
\psfrag{G}{ }
\psfrag{D}{ }
\psfrag{E}{$c_{4}$}
\psfrag{t}{$t$}
\psfrag{r}{$r$}
\psfrag{H}{$\tilde{k}_{H}(r)$}
\psfrag{J}{(a) $H>0$}
\psfrag{K}{(b) $H<0$}
\includegraphics[height=75mm,width=160mm]{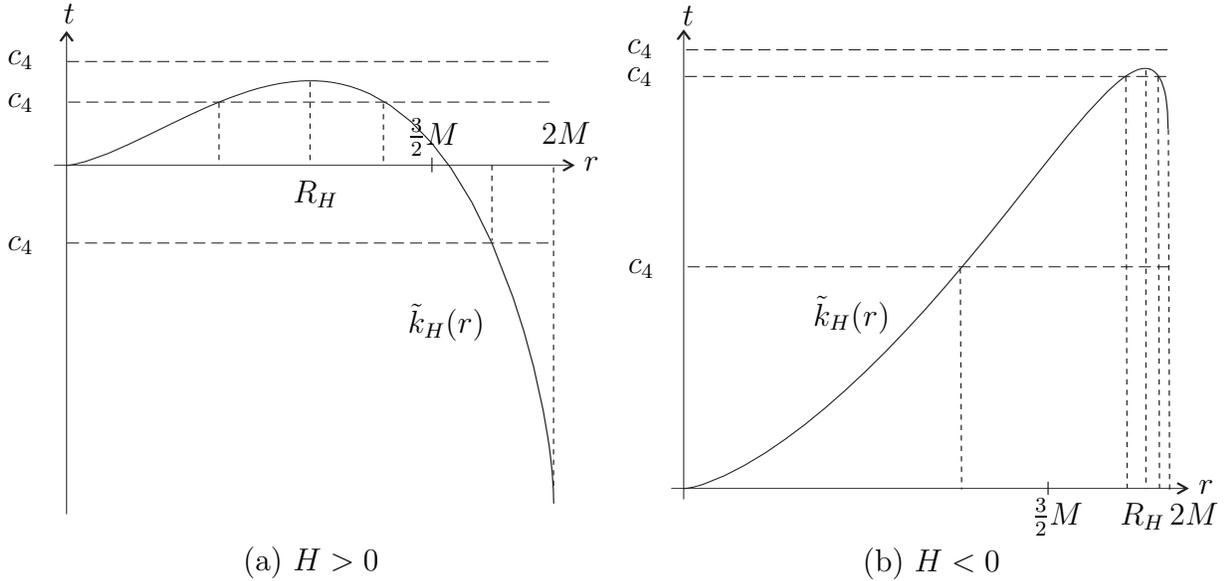}
\caption{Graphs of $\tilde{k}_{H}(r)$ and horizontal lines $l(r)=c_4$.} \label{KHPNnew3}
\end{figure}

Corresponding to Proposition \ref{prop8} and Proposition \ref{proposition9},
the following results can be proved by the same method.

\begin{prop}\label{prop11}
Denote $C_H=\max\limits_{r\in(0,2M)}\tilde{k}_H(r)=\tilde{k}_H(R_H)$,
where $\tilde{k}_H(r)$ is as in {\rm(\ref{ktilde})},
and $R_H$ is as in Proposition \ref{proposition11}.
There are three types of noncylindrical \SC hypersurfaces
$\Sigma^4=(f_4(r),r,\theta,\phi)$ according to the value of $c_4$,
where $f_4(r)=f_4^{*}(r;H,c_4,\bar{c}_4),r,\theta,\phi)$ or $(f_4^{**}(r;H,c_4,\bar{c}_4'),r,\theta,\phi)$.
\begin{itemize}
\item[\rm(a)] If $c_4>C_H$, then $f_{4}(r)$ is defined on $(0,2M)$.
\item[\rm(b)] If $c_4=C_H$, then $f_{4}(r)$ is defined on $(0,R_H)\cup(R_H,2M)$.
\item[\rm(c)] If $\min(0,-8M^3H)<c_4<C_H$,
then $f_{4}(r)$ is defined on $(0,r']$ or $[r'',2M)$ for some
$r'$ and $r''$, which depend on $H$ and $c_4$.
When we take $r_4=r_4'=r' (\mbox{or } r'')$ and $\bar{c}_4=\bar{c}_4'$
in  {\rm(\ref{f4positive})} and {\rm(\ref{f4negative})},
$\Sigma^4=(f_{4}^{*}(r;H,c_4,\bar{c}_4)\cup f_4^{**}(r;H,c_4,\bar{c}_4'),r,\theta,\phi)$
is a complete \SC hypersurface in the Schwarzschild interior.
\end{itemize}
\end{prop}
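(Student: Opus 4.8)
The plan is to prove Proposition~\ref{prop11} by transcribing the proofs of Proposition~\ref{prop8} and Proposition~\ref{proposition9} almost verbatim, with $\tilde{k}_H$ in place of $k_H$, a maximum $C_H$ in place of the minimum $c_H$, and the inequality ``$c_4>$'' in place of ``$c_2<$''. The only genuinely new input is the shape of $\tilde{k}_H$. First I would record, by the sign analysis of $\tilde{k}_H'$ already used for Proposition~\ref{proposition11} (it runs exactly as in the proof of Proposition~\ref{prop7}), that $\tilde{k}_H$ is strictly increasing on $(0,R_H)$, strictly decreasing on $(R_H,2M)$, with boundary values $\tilde{k}_H(0^+)=0$ and $\tilde{k}_H(2M^-)=-8M^3H$ and maximum value $C_H=\tilde{k}_H(R_H)$. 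Recalling from the discussion preceding Proposition~\ref{proposition11} that the domain of $f_4$ is $\{r\in(0,2M)\mid\tilde{k}_H(r)<c_4\}$ together with any endpoint at which $f_4$ stays finite, this immediately gives (a) — if $c_4>C_H$ then $\tilde{k}_H(r)<c_4$ on all of $(0,2M)$ — and reduces (b) to understanding $f_4$ near $r=R_H$ (where alone $\tilde{k}_H(R_H)=c_4$).

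For (b) I would, following Proposition~\ref{prop8}(b), write $f_4'$ so that the expression under the root is the polynomial $(Hr^3+c_4)^2-r^3(2M-r)$, which factors as $\bigl(c_4-\tilde{k}_H(r)\bigr)\bigl(c_4-\tilde{k}_H(r)+2r^{\frac32}(2M-r)^{\frac12}\bigr)$. At $r=R_H$ the second factor is positive and the first has a zero of order exactly two, since $R_H$ is a critical point of $\tilde{k}_H$ (equivalently the linear term $2R_H^2\bigl(3HR_H^{\frac32}(2M-R_H)^{\frac12}+2R_H-3M\bigr)$ in the Taylor expansion of that polynomial vanishes by the defining relation for $R_H$ in Proposition~\ref{proposition11}). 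Hence $f_4'(r)=O((r-R_H)^{-1})$, so $\lim_{r\to R_H}f_4(r)=\pm\infty$, $R_H$ is excluded, and $f_4$ is defined on $(0,R_H)\cup(R_H,2M)$.

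For (c), with $\min(0,-8M^3H)<c_4<C_H$, the monotone-up-then-down shape together with the boundary values shows that each connected component of $\{\tilde{k}_H<c_4\}$ is an interval abutting $r=0$ or $r=2M$, a left one $(0,r')$ and/or a right one $(r'',2M)$; the ``or'' in the statement means each component is handled separately. Since $c_4\ne C_H$ and $R_H$ is the only critical point of $\tilde{k}_H$, the value $c_4$ is a regular value, so at the finite endpoint (say $r'$) the first factor above has a simple zero, giving $f_4'(r)=O((r-r')^{-\frac12})$; the integral then converges and $\lim_{r\to r'}f_4(r)$ is finite, so the domain extends to include $r'$. Taking $r_4=r_4'=r'$ and $\bar{c}_4=\bar{c}_4'$ in (\ref{f4positive}) and (\ref{f4negative}) gives $f_4^{*}(r';H,c_4,\bar{c}_4)=f_4^{**}(r';H,c_4,\bar{c}_4')=\bar{c}_4$, so the $f_4'>0$ branch and the $f_4'<0$ branch meet along $r=r'$. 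Rewriting the union as a graph $r=g(t)$ and repeating the derivative computation of Proposition~\ref{proposition9} verbatim — its only inputs being the formulas for $g^{(k)}$ in terms of $h$, $l_4$ and their $r$-derivatives and the fact that $l_4^2-1\to 0$ as $r\to r'$ — shows $g$ is $C^\infty$ across $t=\bar{c}_4$, so $\Sigma^4=(f_4^{*}\cup f_4^{**},r,\theta,\phi)$ is a complete (and $C^\infty$) \SC hypersurface; the component $(r'',2M)$ is treated identically.

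I expect the only place requiring actual care, rather than a formal copy of Proposition~\ref{prop8}, to be the combinatorial step in (c): one must use the hypothesis $c_4>\min(0,-8M^3H)$ together with $\tilde{k}_H(0^+)=0$ and $\tilde{k}_H(2M^-)=-8M^3H$ to check that $c_4$ is attained by $\tilde{k}_H$ at a regular point at the finite end of each component — so that $f_4$ is finite there and the gluing actually closes the hypersurface up — rather than having the level set reach the horizon $r=2M$, where by the analogue of Proposition~\ref{asym2} $f_4$ would blow up and no gluing is possible. Everything else is sign bookkeeping.
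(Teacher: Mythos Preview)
Your proposal is correct and is exactly the approach the paper intends: the paper gives no separate proof of Proposition~\ref{prop11} but states that it ``can be proved by the same method'' as Propositions~\ref{prop8} and~\ref{proposition9}, and your write-up is a faithful transcription of that method with the obvious sign and max/min reversals. The only cosmetic point is that the $C^\infty$ claim you fold into part~(c) is, in the paper, a separate proposition stated after Proposition~\ref{prop11}; and your ``order exactly two'' at $R_H$ need only be ``order at least two'' for the divergence argument, which is all the paper's own proof of Proposition~\ref{prop8}(b) actually uses.
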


\begin{rem}
For given constant mean curvature $H$, if $c_2=c_4$, then $f_2'(r)=f_4'(r)$.
\end{rem}

\begin{prop}
In case {\rm(c)} of Proposition \ref{prop11}, the \SC hypersurface $\Sigma^4$ is $C^\infty$.
\end{prop}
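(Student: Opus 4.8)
The plan is to mirror the proof of Proposition~\ref{proposition9} verbatim, since the only structural difference between the region {\tt I\!I} situation and the region {\tt I\!I'} situation is the sign convention on the future-directed timelike direction, which has already been absorbed into the definitions of $f_4^*$ and $f_4^{**}$. As in Proposition~\ref{proposition9}, it suffices to check smoothness of $\Sigma^4$ at the single joint point where $f_4^*$ and $f_4^{**}$ are matched; by the symmetry noted in Proposition~\ref{prop11}(c), one may treat only the case $r_4=r_4'=r'$, the case $r''$ being identical.

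First I would set up the local picture at the joint: with $\bar{c}_4=\bar{c}_4'$ and $r<r'$ on the relevant piece, one has $f_4^*(r)\le\bar{c}_4$, $f_4^{**}(r)\ge\bar{c}_4$, and $f_4^*(r')=f_4^{**}(r')=\bar{c}_4$, so the hypersurface, rewritten as a graph $r=g(t)$, satisfies $g(\bar{c}_4)=r'$, with the inverse given by $t=f_4^*(r)$ for $t\le\bar{c}_4$ and $t=f_4^{**}(r)$ for $t\ge\bar{c}_4$. Then I would differentiate $g$ repeatedly and record the same structural formulas as in Proposition~\ref{proposition9}: the odd derivatives $g^{(2k+1)}(t)$ are sums $\sum_{i=0}^k A_{k,i}(l_4^2-1)^{i+\frac12}$ (with a sign flip on the $t>\bar{c}_4$ side coming from $f_4^{**}=-f_4^*$ locally after reflection, i.e. the $(-1)^{2k+1}$ factor), and the even derivatives $g^{(2k)}(t)$ are sums $\sum_{i=0}^k B_{k,i}(l_4^2-1)^i$; here $A_{k,i},B_{k,i}$ are rational expressions in $h,l_4$ and their $r$-derivatives, all regular near $r=r'$ because $r'\in(0,2M)$ stays away from $r=0$ and $r=2M$.

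The key analytic input is the order-of-vanishing estimate: since $c_4$ is \emph{not} a critical value of $\tilde{k}_H$ (we are in the range $\min(0,-8M^3H)<c_4<C_H$, strictly below the maximum $C_H=\tilde{k}_H(R_H)$), the function $l_4^2-1$ has a simple zero at $r=r'$, exactly as $l_2^2-1$ did at $r=r'$ in case (c) of Proposition~\ref{prop8}. Consequently $l_4^2-1\to 0$ as $t\to\bar{c}_4$ (from either side), so every odd derivative tends to $0$ and every even derivative tends to $B_{k,0}$ from both sides; hence all one-sided derivatives of $g$ match at $t=\bar{c}_4$ and $\Sigma^4$ is $C^\infty$. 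I would cite Proposition~\ref{prop8}(c) and the expansion of $(-Hr^3-c_4)^2+r^3(r-2M)$ in powers of $(r-r')$ (whose linear coefficient $2(r')^2(-3H(r')^{\frac32}(2M-r')^{\frac12}+2r'-3M)$ is nonzero precisely because $r'\ne R_H$) to justify the simple-zero claim.

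The main obstacle, such as it is, is purely bookkeeping: one must be careful that the sign conventions in region {\tt I\!I'} (future-directed $=$ increasing $r$, and the corresponding swap between the two branches of the mean-curvature equation~(\ref{CMCeq31})) do not alter the structural form of the derivative formulas for $g$, only the irrelevant overall signs; once that is checked the vanishing-order argument is identical to Proposition~\ref{proposition9}. I would therefore keep the write-up short, stating ``the proof is identical to that of Proposition~\ref{proposition9}, replacing $f_2^*,f_2^{**},l_2,k_H,r_H$ by $f_4^*,f_4^{**},l_4,\tilde{k}_H,R_H$ and using that $c_4$ is not a critical value of $\tilde{k}_H$,'' and only spelling out the joint-point matching if the referee wants it.
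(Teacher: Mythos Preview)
Your proposal is correct and follows exactly the paper's approach: the paper gives no separate proof for this proposition, merely noting that Propositions~\ref{prop11} and this one ``can be proved by the same method'' as Propositions~\ref{prop8} and~\ref{proposition9}, which is precisely what you do. One tiny slip: in region {\tt I\!I'} the linear coefficient in the expansion of $(Hr^3+c_4)^2+r^3(r-2M)$ at $r'$ is $2(r')^2\bigl(3H(r')^{3/2}(2M-r')^{1/2}+2r'-3M\bigr)$ (with $+3H$, not $-3H$), but this is still nonzero exactly when $r'\neq R_H$, so your conclusion stands.
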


\section{Complete and smooth SS-CMC hypersurfaces} \label{section5}
In this section we will investigate how to join solutions from different regions at $r=2M$
to construct complete hypersurfaces in the Kruskal extension,
and  discuss the smoothness property at each joint point.
First, we discuss \SC hypersurfaces in region {\tt I, I\!I}, and {\tt I'}.
As in section 4, denote $c_H=\min\limits_{r\in(0,2M)}-Hr^3-r^{\frac32}(2M-r)^{\frac12}$,
and notice that $c_H<-8M^3H$ from Figure \ref{KHPNnew4},
then we have the following theorems.

\begin{thm}\label{thm1}
Given constant mean curvature $H$, $c_1<-8M^3H$, and $\bar{c}_1\in\mathbb{R}$,
it determines a \SC hypersurface $\Sigma^1_{H,c_1,\bar{c}_1}$ in region {\tt I}.
This $\Sigma^1_{H,c_1,\bar{c}_{1}}$ connects smoothly with a \SC hypersurface $\Sigma^2_{H,c_1,\bar{c}_2}$
for some $\bar{c}_2$ in region {\tt I\!I}. Moreover,
\begin{itemize}
\item[\rm(a)] when $c_1<c_H$, the corresponding \SC hypersurface $\Sigma^2_{H,c_1,\bar{c}_2}$ in Schwarzschild
is defined on $(0,2M)$, and $\Sigma^1\cup \Sigma^2$
forms a complete and smooth \SC hypersurface in the Kruskal extension with two ends.
\item[\rm(b)] when $c_1=c_H$, the corresponding \SC hypersurface $\Sigma^2_{H,c_1,\bar{c}_2}$ in Schwarzschild
is defined on $(r_H,2M)$, and $\Sigma^1\cup \Sigma^2$
forms a complete and smooth \SC hypersurface in the Kruskal extension with two ends.
Here $r_H$ is defined as in Proposition \ref{prop7}.
\item[\rm(c)] when $c_H<c_1<-8M^3H$, $\Sigma^2_{H,c_1,\bar{c}_2}$
connects smoothly to a \SC hypersurface $\Sigma^3_{H,c_1,\bar{c}_3}$
for some $\bar{c}_3$ in region {\tt I'}.
Then $\Sigma^1\cup\Sigma^2\cup\Sigma^3$
forms a complete and smooth \SC hypersurface in the Kruskal extension with two ends.
\end{itemize}
\end{thm}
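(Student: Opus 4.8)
The plan is to track a single \SC solution across the horizon $r=2M$ by passing to the Kruskal coordinates $(X,T)$, in which the metric is smooth there, and then verify that the pieces fit together smoothly. First I would start with $\Sigma^1_{H,c_1,\bar c_1}$ in region {\tt I}. By Proposition~\ref{asym} case~(a) (which applies since $c_1<-8M^3H$), we have $f_1'(r)<0$ near $r=2M$, $f_1'(r)=O((r-2M)^{-1})$, and the curve $t=f_1(r)$ in the $(t,r)$ plane is squeezed between the two null geodesics $t+(r+2M\ln(r-2M))=C_2-C_1$ and $t+(r+2M\ln(r-2M))=C_2$ as $r\to 2M^+$; moreover the spacelike condition persists. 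The key point is that these bounds say precisely that the null coordinate $v=t+(r+2M\ln|r-2M|)$ stays bounded while $u=t-(r+2M\ln|r-2M|)\to+\infty$, hence in region {\tt I} the image point has $U=e^{-u/4M}\to 0^+$ and $V=e^{v/4M}$ converging to a positive value. So the image curve in the Kruskal plane hits the future horizon $L_+$ ($U=0$, $V>0$) transversally and at a single point.

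Next I would identify the solution on the other side of $L_+$. Since $L_+$ of {\tt I} is glued to $L_+$ of {\tt I\!I}, the continuation lives in region {\tt I\!I}; I want to show it is (part of) some $\Sigma^2_{H,c_2,\bar c_2}$ and that in fact $c_2=c_1$. The natural way to pin down $c_2$ is to note that the constant $c_i$ is a geometric invariant of the \SC solution: by the Remark after Proposition~\ref{scmc3prop1} (resp.\ the analogous computation in section~5 for the interior) the second fundamental form eigenvalues are $H+c_i/r^3$ and $H-2c_i/r^3$, so $c_i r^{-3}$ is the (signed) difference of principal curvatures, a quantity that extends continuously across the smooth horizon. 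Matching the second fundamental form of $\Sigma^1$ as $r\to 2M^+$ with that of a candidate $\Sigma^2$ as $r\to 2M^-$ forces $c_2=c_1$, and then $\bar c_2$ is fixed by requiring the two curves to pass through the same point of $L_+$. Because $c_1<-8M^3H$ we may invoke Proposition~\ref{asym2}: on the interior side $f_2'(r)=O((2M-r)^{-1})$, the curve is again trapped between two null geodesics $t+(r+2M\ln(2M-r))=\text{const}$, and the spacelike condition is preserved, so $\Sigma^2$ also meets $L_+$ transversally at one point; matching the $v$-coordinate value gives the unique $\bar c_2$.

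For the smoothness at the joint, I would work in the $(U,V,\theta,\phi)$ chart, where the metric \eqref{SchMetric2} and the function $r(UV)$ are real-analytic across $U=0$. The hypersurface is, near the joint, a graph of $V$ as a function of $U$ along $L_+$; I would show this graph function is $C^\infty$ (indeed analytic) by expressing $dV/dU$ in terms of $f_i'$ and the transition formulae, and checking that the $O((r-2M)^{\pm 1})$ blow-up of $f_i'$ is exactly cancelled by the logarithmic singularity in the $(u,v)\mapsto(U,V)$ map — this is the standard mechanism by which $r=$ const and the CMC slices become smooth in Kruskal coordinates. Concretely one shows all one-sided derivatives of the graph function at the joint exist and agree from both sides, using that $c_2=c_1$ so the two Taylor expansions coincide. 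The three sub-cases then differ only in what happens at the \emph{inner} end of $\Sigma^2$: in (a) $c_1<c_H$ so by Proposition~\ref{prop8}(a) $f_2$ is defined on all of $(0,2M)$ and the surface runs into $r=0$, giving the second end; in (b) $c_1=c_H$, Proposition~\ref{prop8}(b) gives the domain $(r_H,2M)$ but near $r_H$ one still has $f_2'=O((r-r_H)^{-1})$ by the argument in the proof of Proposition~\ref{prop8}(b), so the surface reaches $r=0$ after all and again has two ends; in (c) $c_H<c_1<-8M^3H$, Proposition~\ref{prop8}(c) shows $f_2$ terminates at some $r''$ with $f_2'=O((r-r'')^{-1/2})$ and finite limit, so by the gluing mechanism of Proposition~\ref{proposition9} it connects smoothly to $f_2^{**}$, which by the Remark identifying $f_2'$ with $f_4'$ and the sign change $f_1'=-f_3'$ continues (after crossing $L_-$ back into an exterior) as a $\Sigma^3_{H,c_1,\bar c_3}$ in region {\tt I'}; the same Kruskal-coordinate smoothness check at that horizon crossing finishes the case, and $\Sigma^3$ then runs out to its spatial infinity, supplying the second end.

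The main obstacle is the smoothness verification at the horizon joint: one must show that the CMC graph, written in Kruskal coordinates, is $C^\infty$ across $U=0$, which requires controlling \emph{all} derivatives of $f_i$ near $r=2M$ and seeing that the logarithmic divergences conspire with the coordinate change to produce a smooth (even analytic) function — and, crucially, that the matching constant $c_2=c_1$ makes the one-sided jets from regions {\tt I} and {\tt I\!I} agree. The transversality (so that the hypersurface is genuinely a graph near the joint and the two ends are the only ends) and the completeness bookkeeping in the three sub-cases are comparatively routine once the asymptotic estimates from Propositions~\ref{asym},~\ref{asym2}, and~\ref{prop8} are in hand.
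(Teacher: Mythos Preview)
Your overall strategy---tracking the solution into Kruskal coordinates via Propositions~\ref{asym} and~\ref{asym2}, matching the parameter $c_2=c_1$, and then verifying smoothness in the $(U,V)$ chart---is sound and parallels the paper's. Your device for forcing $c_2=c_1$ (continuity of the principal curvatures $H+c_i/r^3$, $H-2c_i/r^3$ across the smooth horizon) is different from the paper's, which instead matches the finite part $\bar f'(r)=f_i'(r)+1/h(r)$ of the derivative at $r=2M$; both are legitimate necessary conditions. The paper's choice pays off in the smoothness step: once one observes that $f_1'$ and $f_2^{*\prime}$ are \emph{both} equal to $-1/h+\bar f'$ with the \emph{same} smooth $\bar f'$ (formula~(\ref{barf})), one writes $U$ and $V$ as identical smooth functions of $r$ on either side of $r=2M$, and $C^\infty$ regularity is immediate---no separate matching of one-sided Taylor jets is needed. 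Your sketch ``check that the blow-up is cancelled by the logarithm'' is the right idea but stops short of this clean mechanism.

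There is, however, a genuine error in your treatment of case~(b). You write that since $f_2'=O((r-r_H)^{-1})$ near $r_H$, ``the surface reaches $r=0$ after all.'' It does not: the domain of $f_2$ is $(r_H,2M)$, and the divergence $f_2(r)\to\pm\infty$ as $r\to r_H^+$ is a divergence of the $t$-coordinate, which in region {\tt I\!I} is \emph{spacelike}. In the Kruskal picture the curve is asymptotic to the hyperbola $X^2-T^2=(r_H-2M)e^{r_H/2M}$, i.e.\ to the cylindrical \SC hypersurface $r=r_H$; it never enters the strip $r<r_H$, let alone approaches $r=0$. Completeness still holds because the induced arc-length element along $\Sigma^2$ is $\mathrm{d}s=\big((-h)(l_2^2-1)\big)^{-1/2}\,\mathrm{d}r$, and by the computation in the proof of Proposition~\ref{prop8}(b) the factor $l_2^2-1$ vanishes to \emph{second} order at $r_H$, so the distance to $r_H$ is infinite. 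Thus in case~(b) the second end is asymptotic to a cylinder, not to the singularity.
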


\begin{rem} The followings are some descriptions for the ends in each case of Theorem \ref{thm1}.
\begin{itemize}
\item In case (a), among the two ends,
one is toward the space infinity $r=\infty$ in the first Schwarzschild exterior,
and the other is toward the space singularity $r=0$ in the first Schwarzschild interior.
\item In case (b), one end is toward the space infinity in the first Schwarzschild exterior,
and the other end is asymptotic to a cylindrical \SC hypersurface $r=r_H$ in the first Schwarzschild interior.
\item In case (c), the two ends are toward space infinities $r=\infty$ in
the first and second Schwarzschild exteriors, respectively.
\end{itemize}
\end{rem}

Relations between $\bar{c}_1,\bar{c}_2$, and $\bar{c}_3$ are described as below.
\begin{thm}\label{thm1bar}
Consider $c_1<-8M^3H$ and complete \SC hypersurfaces as in Theorem \ref{thm1}.
Take $r_1>2M$ satisfying $r_1+2M\ln|r_1-2M|=0$ and denote $\bar{f}'(r)=f_1'(r)+\frac1{h(r)}$,
which can be expressed as
\begin{align}
\bar{f}'(r)=\frac{r^4}{(Hr^3+c_1)^2+r^3(r-2M)-(Hr^3+c_1)\sqrt{(Hr^3+c_1)^2+r^3(r-2M)}}.\label{barf}
\end{align}
Then
\begin{align*}
\bar{c}_2=\bar{c}_1-\int_{r_2}^{r_1}\bar{f}'(r)\mbox{d}r-(r_2+2M\ln|r_2-2M|),
\end{align*}
where $r_2<2M$ is in the domain of $f_2^{*}(r)$.
In case {\rm(c)} of Theorem \ref{thm1},
if we take $r_2=r_2'=r''$, $\bar{c}_2'=\bar{c}_2$,
and $r_3=r_1$ in {\rm(\ref{f2positive})} and {\rm(\ref{f2negative})},
then
\begin{align*}
\bar{c}_3
&=\bar{c}_2-\int_{r''}^{r_3}\bar{f}'(r)\mbox{d}r-(r''+2M\ln|r''-2M|) \\
&=\bar{c}_1-2\int_{r''}^{r_1}\bar{f}'(r)\mbox{d}r-2(r''+2M\ln|r''-2M|).
\end{align*}
\end{thm}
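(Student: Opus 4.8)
The plan is to first reduce $f_1'$ (and the relevant pieces of $f_2'$ and $f_3'$) to a single rational expression, which gives (\ref{barf}) directly, and then to extract the two $\bar c$-relations purely from the requirement that the glued hypersurface be continuous across $r=2M$, read off in Kruskal coordinates. First, writing $A(r)=Hr^{3}+c_{1}$ and $D(r)=\sqrt{A^{2}+r^{3}(r-2M)}$, a short computation from the definition of $l_{1}$ in Proposition~\ref{scmc3prop1} gives $l_{1}=A\,r^{-3/2}(r-2M)^{-1/2}$ on $r>2M$, hence $1+l_{1}^{2}=D^{2}/(r^{3}(r-2M))$ and $f_{1}'=Ar/((r-2M)D)$. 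Since $1/h=r/(r-2M)$, we get $\bar f'=f_{1}'+1/h=\frac{r}{r-2M}\cdot\frac{A+D}{D}$; multiplying numerator and denominator by $D-A$ and using $D^{2}-A^{2}=r^{3}(r-2M)$ turns this into $\bar f'=r^{4}/(D(D-A))$, which is (\ref{barf}). Because $c_{1}<-8M^{3}H$ one has $A(2M)=8M^{3}H+c_{1}<0$, so $D-A=D+|A|$ is bounded away from $0$ near $r=2M$ and $\bar f'$ extends continuously and integrably across $r=2M$ -- which is the whole point of introducing it. Running the same reduction on $0<r<2M$ with $c_{2}=c_{1}$ (the value supplied by Theorem~\ref{thm1}), and using $\sqrt{l_{2}^{2}/(l_{2}^{2}-1)}=|A|/D$ together with $A<0$ near $r=2M$, shows that there $f_{2}^{*\prime}=\bar f'-1/h$ for the positive branch and $f_{2}^{**\prime}=-(\bar f'-1/h)$ for the negative branch; and on $r>2M$ with $c_{3}=c_{1}$, the remark after (\ref{forumla3}) gives $f_{3}'=-f_{1}'=-(\bar f'-1/h)$.

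Next I would integrate. Since $\int^{r}\frac{\mathrm{d}x}{h(x)}=x+2M\ln|x-2M|$ and $r_{1}$ is chosen so that $r_{1}+2M\ln|r_{1}-2M|=0$, integrating $f_{1}'=\bar f'-1/h$ from $r_{1}$ gives $f_{1}(r)=\int_{r_{1}}^{r}\bar f'(x)\,\mathrm{d}x-(r+2M\ln|r-2M|)+\bar c_{1}$, and likewise $f_{2}^{*}(r)=\int_{r_{2}}^{r}\bar f'-(r+2M\ln|r-2M|)+(r_{2}+2M\ln|r_{2}-2M|)+\bar c_{2}$, with the analogous representations for $f_{2}^{**}$ (base point $r''$) and $f_{3}$ (base point $r_{3}=r_{1}$), the latter two carrying an overall minus sign in front of the integral. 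Now $\Sigma^{1}$ runs into the half-line $L_{+}=\{U=0,\,V>0\}$ as $r\to 2M^{+}$ (here $f_{1}\to+\infty$ by Proposition~\ref{asym}(a)), and the $f_{2}^{*}$-branch of $\Sigma^{2}$ runs into the same $L_{+}$ as $r\to 2M^{-}$ (here $f_{2}^{*}\to+\infty$ by Proposition~\ref{asym2}); by Theorem~\ref{thm1} these pieces are joined continuously there, so their Kruskal coordinates must agree. From the $(X,T)$--$(r,t)$ formulas of Section~\ref{Kru}, $V=X+T$ equals $\sqrt{r-2M}\,\mathrm{e}^{(r+t)/4M}$ in region~{\tt I} and $\sqrt{2M-r}\,\mathrm{e}^{(r+t)/4M}$ in region~{\tt I\!I}; substituting $t=f_{1}(r)$, resp.\ $t=f_{2}^{*}(r)$, the term $-2M\ln|r-2M|$ inside $r+t$ contributes a factor $|r-2M|^{-1/2}$ that exactly cancels the prefactor $\sqrt{|r-2M|}$, leaving $\lim_{r\to2M^{+}}V|_{\Sigma^{1}}=\exp\!\big(\tfrac1{4M}(\int_{r_{1}}^{2M}\bar f'+\bar c_{1})\big)$ and $\lim_{r\to2M^{-}}V|_{\Sigma^{2}}=\exp\!\big(\tfrac1{4M}(\int_{r_{2}}^{2M}\bar f'+r_{2}+2M\ln|r_{2}-2M|+\bar c_{2})\big)$. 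Equating these, taking logarithms, and writing $\int_{r_{1}}^{2M}-\int_{r_{2}}^{2M}=-\int_{r_{2}}^{r_{1}}$ gives exactly $\bar c_{2}=\bar c_{1}-\int_{r_{2}}^{r_{1}}\bar f'(r)\,\mathrm{d}r-(r_{2}+2M\ln|r_{2}-2M|)$.

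For case~(c) I would repeat this at $L_{-}=\{V=0,\,U<0\}$, where the $f_{2}^{**}$-branch (with $f_{2}^{**}\to-\infty$ as $r\to2M^-$) meets $\Sigma^{3}$ in region~{\tt I'} (with $f_{3}\to-\infty$): the matching coordinate is now $U=X-T$, equal to $-\sqrt{2M-r}\,\mathrm{e}^{(r-t)/4M}$ in region~{\tt I\!I} and $-\sqrt{r-2M}\,\mathrm{e}^{(r-t)/4M}$ in region~{\tt I'}, and the overall minus signs in the $f_{2}^{**}$, $f_{3}$ representations put $-2M\ln|r-2M|$ into $r-t$, so the same cancellation of $\sqrt{|r-2M|}$ against $|r-2M|^{-1/2}$ applies. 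Equating $\lim U|_{\Sigma^{2}}$ and $\lim U|_{\Sigma^{3}}$ with the choices $\bar c_{2}'=\bar c_{2}$, $r_{2}'=r''$, $r_{3}=r_{1}$ yields $\bar c_{3}=\bar c_{2}-\int_{r''}^{r_{3}}\bar f'\,\mathrm{d}r-(r''+2M\ln|r''-2M|)$, and substituting the formula for $\bar c_{2}$ with $r_{2}=r''$ gives the second closed form $\bar c_{3}=\bar c_{1}-2\int_{r''}^{r_{1}}\bar f'\,\mathrm{d}r-2(r''+2M\ln|r''-2M|)$ by pure bookkeeping.

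The reduction to (\ref{barf}) and the integral representations are routine algebra and elementary integration; the part that needs care -- and the step I would write out in full -- is the Kruskal-coordinate matching. One must correctly identify which branch ($f_{2}^{*}$ versus $f_{2}^{**}$, and $f_{1}$ versus $f_{3}$) limits onto $L_{+}$ versus $L_{-}$ and with which sign, which is dictated entirely by $A(2M)=8M^{3}H+c_{1}<0$, and then one must take the limit of the null coordinate cleanly. The crucial mechanism is that the logarithmic divergence of $f_{i}$ as $r\to2M$ (the same phenomenon that squeezes the curve between two null geodesics in Propositions~\ref{asym} and~\ref{asym2}) is exactly calibrated so that the factor $|r-2M|^{-1/2}$ hidden inside the exponential cancels the vanishing prefactor $\sqrt{|r-2M|}$, leaving a finite nonzero limit equal to a nonzero constant times $\mathrm{e}^{\bar c_{i}/4M}$; it is this affine dependence on $\bar c_{i}$ that makes the matching equation uniquely solvable and produces the stated relations. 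I expect the sign and branch bookkeeping in this last step to be the main source of potential error.
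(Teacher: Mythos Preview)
Your proposal is correct and follows essentially the same route as the paper: derive the regular part $\bar f'$ of $f_1'$ across $r=2M$, write $f_1,f_2^{*},f_2^{**},f_3$ in the form $\pm\int\bar f'\mp\int 1/h+\text{const}$, and then match the finite limits of the null Kruskal coordinates $V$ at $L_{+}$ and $U$ at $L_{-}$ to read off $\bar c_{2}$ and $\bar c_{3}$. The paper carries this out inside a combined proof of Theorems~\ref{thm1} and~\ref{thm1bar} (and so also establishes $c_{1}=c_{2}=c_{3}$ and smoothness there), but for Theorem~\ref{thm1bar} alone your argument is the same computation with the same cancellation $\sqrt{|r-2M|}\cdot|r-2M|^{-1/2}=1$.
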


\begin{proof}[Proof of Theorem \ref{thm1} and \ref{thm1bar}.]
First, we prove that the necessary condition for  $\Sigma^1_{H,c_1,\bar{c}_1}$ and $\Sigma^2_{H,c_2,\bar{c}_2}$
(or $\Sigma^2_{H,c_2,\bar{c}_2}$ and $\Sigma^3_{H,c_3,\bar{c}_3}$) to join smoothly
 is $c_1=c_2$ (or $c_2=c_3$).

Given $\Sigma^1_{H,c_1,\bar{c}_1}=(f_1(r;H,c_1,\bar{c}_1),r,\theta,\phi)$,
by Proposition \ref{asym} we know that
\begin{align*}
f_1'(r)
=\frac{-1}{h}\sqrt{\frac{l_{1}^2}{1+l_{1}^2}}
=-\frac1{h(r)}+\mbox{finite term}\quad\quad \mbox{near } r=2M.
\end{align*}
The limit of the finite term of $f_1'(r)$ at $r=2M$ is
\begin{align*}
\lim_{r\to 2M^+}\left(\frac{-1}{h}\sqrt{\frac{l_{1}^2}{1+l_{1}^2}}+\frac1{h}\right)
=\frac1{2\left(2MH+\frac{c_{1}}{4M^2}\right)^2}.
\end{align*}
If $\Sigma^2_{H,c_2,\bar{c}_2}$ and $\Sigma^1_{H,c_1,\bar{c}_1}$ join smoothly,
the corresponding function $f_2(r)$ satisfies
\begin{align*}
f_2'(r)=\frac{1}{-h}\sqrt{\frac{l_{2}^2}{l_{2}^2-1}}=\frac1{-h(r)}+\mbox{finite term}
\quad \mbox{near } r=2M,
\end{align*}
and $c_2<-8M^3H$ because the interface of region {\tt I} and {\tt I\!I} is $r=2M, t=\infty$.
Calculating the limit of the finite term of $f_2'(r)$ at $r=2M$ gives
\begin{align*}
\lim_{r\to 2M^-}\left(\frac1{-h}\sqrt{\frac{l_{2}^2}{l_{2}^2-1}}+\frac1h\right)
=\frac1{2\left(-2MH-\frac{c_{2}}{4M^2}\right)^2}.
\end{align*}
Hence the necessary condition to join these two hypersurfaces is
\begin{align*}
\frac1{2\left(2MH+\frac{c_{1}}{4M^2}\right)^2}
=\frac1{2\left(-2MH-\frac{c_{2}}{4M^2}\right)^2}
\Rightarrow c_{2}=c_{1}\quad\mbox{or}\quad c_{2}=-c_{1}-16M^3H.
\end{align*}
Because $c_{1}<-8M^3H$ and $c_{2}<-8M^3H$, it follows that $c_{2}=c_{1}$ is the only choice.

Similarly, if $\Sigma^2_{H,c_2,\bar{c}_2}$ and $\Sigma^3_{H,c_3,\bar{c}_3}$ join smoothly,
their corresponding functions $f_2(r)$ and $f_3(r)$ near $r=2M$ satisfy
\begin{align*}
f_2'(r)=\frac{1}{h}\sqrt{\frac{l_{2}^2}{l_{2}^2-1}}=\frac1{h(r)}+\mbox{finite term},\quad
f_3'(r)=\frac{1}{h}\sqrt{\frac{l_{3}^2}{l_{3}^2-1}}=\frac{1}{h(r)}+\mbox{finite term},
\end{align*}
and $c_3<-8M^3H$ because the interface of region {\tt I\!I} and {\tt I'} is $r=2M,t=-\infty$.
Limits of the finite term of $f_2'(r)$ and $f_3'(r)$ at $r=2M$ are
\begin{align*}
\frac{-1}{2\left(-2MH-\frac{c_{2}}{4M^2}\right)^2}\quad\mbox{and}\quad
\frac{-1}{2\left(-2HM-\frac{c_{3}}{4M^2}\right)^2},
\end{align*}
respectively, so the necessary condition to join $\Sigma^2_{H,c_2,\bar{c}_2}$ and $\Sigma^3_{H,c_3,\bar{c}_3}$ is
\begin{align*}
\frac{-1}{2\left(-2MH-\frac{c_{2}}{4M^2}\right)^2}=\frac{-1}{2\left(-2MH-\frac{c_{3}}{4M^2}\right)^2}
\Rightarrow c_{3}=c_{2}\quad\mbox{or}\quad c_{3}=-c_{2}+16M^3H.
\end{align*}
Hence $c_{3}=c_{2}$ is the only possibility because $c_3<-8M^3H$ and $c_2<-8M^3H$.

Next, we find relations of $\bar{c}_1,\bar{c}_2$ and $\bar{c}_3$.
Notice that for $\Sigma_{H,c_1,\bar{c}_1}^1$ and $\Sigma_{H,c_1,\bar{c}_2}^2$,
they have expressions
\begin{align*}
f_1'(r)=-\frac1{h(r)}+\bar{f}'(r)\quad\mbox{and}\quad f_2'(r)=-\frac1{h(r)}+\bar{f}'(r)\mbox{ when } f_2'(r)>0,
\end{align*}
where $\bar{f}'(r)$ is as in (\ref{barf}).
The function $\bar{f}'(r)$ comes from the finite term of $f_1'(r)$ and $f_2'(r)$ near $r=2M$,
and it is clearly well-defined at $r=2M$.
In addition, when $r>r''$ and $r\neq 2M$, $\bar{f}'(r)$ is the sum of two smooth functions.
So $\bar{f}'(r)$ is finite valued for all $r>r''$.

Since we hope $\Sigma_{H,c_1,\bar{c}_1}^1$ and $\Sigma_{H,c_1,\bar{c}_2}^2$ to join smoothly,
they must satisfy the following condition in null coordinates:
\begin{align*}
&\lim_{r\to 2M^+}V(r)=\lim_{r\to 2M^-}V(r) \\
\Rightarrow\; &\exp\left(\frac{1}{4M}\left(\int_{r_1}^{2M}\bar{f}'(r)\mbox{d}r+\bar{c}_1\right)\right) \\
&=\exp\left(\frac{1}{4M}\left(\int_{r_2}^{2M}\bar{f}'(r)\mbox{d}r+\bar{c}_2+r_2+2M\ln|r_2-2M|\right)\right) \\
\Rightarrow\; &\bar{c}_2=\bar{c}_1+\int_{r_1}^{r_2}\bar{f}'(r)\mbox{d}r-(r_2+2M\ln|r_2-2M|).
\end{align*}

From Proposition \ref{prop8}, when we take $r_2=r_2'=r''$
and $\bar{c}_2'=\bar{c}_2$ in (\ref{f2positive}) and (\ref{f2negative}), it follows that
$\Sigma^2_{H,c_1,\bar{c}_2}=(f_2^{*}(r;H,c_1,\bar{c}_2)\cup f_2^{**}(r;H,c_1,\bar{c}_2'),r,\theta,\phi)$
is a complete \SC hypersurfaces in region {\tt I\!I}.

If $\Sigma^2_{H,c_1,\bar{c}_2}$ and $\Sigma^3_{H,c_1,\bar{c}_3}$ join smoothly and $r_3=r_1$,
since their expressions are
\begin{align*}
f_2'(r)=\frac1{h(r)}-\bar{f}'(r)\mbox{ when } f_2'(r)<0\quad\mbox{and}\quad f_3'(r)=\frac1{h(r)}-\bar{f}'(r),
\end{align*}
they must satisfy
\begin{align*}
&\lim_{r\to 2M^-}U(r)=\lim_{r\to 2M^+}U(r) \\
\Rightarrow \; &-\exp\left(-\frac1{4M}\left(-\int_{r''}^{2M}\bar{f}'(r)\mbox{d}r
+\bar{c}_2-r''-2M\ln|r''-2M|\right)\right) \\
&=-\exp\left(-\frac1{4M}\left(-\int_{r_3}^{2M}\bar{f}'(r)\mbox{d}r
+\bar{c}_3\right)\right) \\
\Rightarrow\; &\bar{c}_3
=\bar{c}_2-\int_{r''}^{r_3}\bar{f}'(r)\mbox{d}r-(r''+2M\ln|r''-2M|) \\
\Rightarrow\; &\bar{c}_3
=\bar{c}_1-2\int_{r''}^{r_1}\bar{f}'(r)\mbox{d}r-2(r''+2M\ln|r''-2M|).
\end{align*}

Finally, we investigate the smoothness of these complete \SC hypersurfaces.
When express $\Sigma^1_{H,c_1,\bar{c}_1}$ and $\Sigma^2_{H,c_1,\bar{c_2}}$
in null coordinates near the joint point,
they both have
\begin{align*}
U&=(r-2M)\exp\left(\frac1{4M}\left(2r-\int_{r_1}^r\bar{f}'(r)\mbox{d}r+r_1+2M\ln|r_1-2M|\right)\right) \\
V&=\exp\left(\frac1{4M}\left(\int_{r_1}^r\bar{f}'(r)\mbox{d}r+r_1+2M\ln|r_1-2M|\right)\right).
\end{align*}
Hence $\Sigma_{H,c_1,\bar{c}_1}^1\cup\Sigma_{H,c_1,\bar{c}_2}^2$ is smooth.

The expressions for $\Sigma^2_{H,c_1,\bar{c}_2}$ and $\Sigma^3_{H,c_1,\bar{c}_3}$
in null coordinates near the joint point are both
\begin{align*}
U&=-\exp\left(-\frac1{4M}\left(\int_{r_1}^r\bar{f}'(r)\mbox{d}r+r_1+2M\ln|r_1-2M|\right)\right), \\
V&=(r-2M)\exp\left(\frac1{4M}\left(2r-\int_{r_1}^r\bar{f}'(r)\mbox{d}r+r_1+2M\ln|r_1-2M|\right)\right).
\end{align*}
Hence $\Sigma_{H,c_1,\bar{c}_2}^2\cup\Sigma_{H,c_1,\bar{c}_3}^3$ is smooth.
\end{proof}

For $c_1>-8M^3H$, the \SC hypersurfaces lie in region {\tt I, I\!I'}, and {\tt I'}.
Denote $C_H=\max\limits_{r\in(0,2M)}-Hr^3+r^{\frac32}(2M-r)^{\frac12}$
and note that $C_H>-8M^3H$ from Figure \ref{KHPNnew3}.
By similar argument, we have the following results.

\begin{thm}\label{thm2}
Given  constant mean curvature $H$, $c_1>-8M^3H$, and $\bar{c}_1\in\mathbb{R}$,
it determines a \SC hypersurface $\Sigma^1_{H,c_1,\bar{c}_1}$ in region {\tt I}.
This $\Sigma^1_{H,c_1,\bar{c}_{1}}$ connects smoothly with a \SC hypersurface $\Sigma^4_{H,c_1,\bar{c}_4'}$
for some $\bar{c}_4'$ in region {\tt I\!I'}. Moreover,
\begin{itemize}
\item[\rm(a)] when $c_1>C_H$, the corresponding \SC hypersurface $\Sigma^4_{H,c_1,\bar{c}_4'}$ in Schwarzschild
is defined on $(0,2M)$, and $\Sigma^1\cup \Sigma^4$
forms a complete and smooth \SC hypersurface in the Kruskal extension with two ends.

\item[\rm(b)] when $c_1=C_H$, the corresponding \SC hypersurface $\Sigma^4_{H,c_1,\bar{c}_4'}$ in Schwarzschild
is defined on $(R_H,2M)$, and $\Sigma^1\cup \Sigma^4$
forms a complete and smooth \SC hypersurface in the Kruskal extension with two ends.
Here $R_H$ is defined as in Proposition \ref{proposition11}.

\item[\rm(c)] when $-8M^3H<c_1<C_H$, $\Sigma^4_{H,c_1,\bar{c}_4'}$
connects smoothly to a \SC hypersurface $\Sigma^3_{H,c_1,\bar{c}_3}$
for some $\bar{c}_3$ in {\tt I'}.
Then $\Sigma^1\cup \Sigma^4\cup \Sigma^3$
forms a complete and smooth \SC hypersurface in the Kruskal extension with two ends.
\end{itemize}
\end{thm}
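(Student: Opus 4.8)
The plan is to mirror the proofs of Theorems~\ref{thm1} and~\ref{thm1bar}, adapting each step to the condition $c_1>-8M^3H$. By Proposition~\ref{asym}(c) this condition makes $f_1'(r)>0$ near $r=2M$ and $\lim_{r\to2M^+}f_1(r)=-\infty$, so the end of $\Sigma^1_{H,c_1,\bar c_1}$ runs into the past horizon $r=2M,\ t=-\infty$ of region {\tt I}, which is the interface with region {\tt I\!I'}; hence $\Sigma^1$ must be continued into {\tt I\!I'} by a solution of type $\Sigma^4$. As before the proof has three steps: (i) the necessary matching of the parameter $c$ across each $r=2M$ interface; (ii) the choice of the additive constants $\bar c$ making neighbouring pieces agree; and (iii) the verification that the union is $C^\infty$. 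A preliminary lemma is the region-{\tt I\!I'} analogue of Proposition~\ref{asym2}, valid now for $c_4>-8M^3H$: since $l_4(r)=\frac1{\sqrt{-h(r)}}(Hr+\frac{c_4}{r^2})\to+\infty$ as $r\to2M^-$ whenever $2MH+\frac{c_4}{4M^2}\neq0$, both branches $f_4^{*},f_4^{**}$ are defined near $r=2M^-$, each derivative has order $O((2M-r)^{-1})$, the curve $(f_4(r),r)$ lies between two null geodesics, and $\langle\nabla F,\nabla F\rangle$ has a strictly negative limit; the argument is word for word that of Proposition~\ref{asym2}, since that proof uses the constant only through $2MH+\frac{c}{4M^2}\neq0$.

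For step (i), a short computation in the Kruskal null coordinates first identifies which branch attaches to $\Sigma^1$. With $u=t-(r+2M\ln|r-2M|)$ and $v=t+(r+2M\ln|r-2M|)$ one has $\frac{d}{dr}(r+2M\ln|r-2M|)=\frac1h$ on both sides of $r=2M$, so along $t=f(r)$ the derivative $\frac{du}{dr}=f'(r)-\frac1h$ stays finite exactly when $f'(r)=\frac1h+O(1)$; this holds for $f_1'$ on the {\tt I}-side and, on the {\tt I\!I'}-side, for the branch $f_4^{**}$ with $f_4'<0$ (while $f_4^{*}$ has $f_4'=\frac1{-h}+O(1)$, which would blow $U$ up). Equating the two finite parts $\lim_{r\to2M^+}(f_1'-\frac1h)$ and $\lim_{r\to2M^-}((f_4^{**})'-\frac1h)$ — each of which depends on the corresponding constant only through $(2MH+\frac{c}{4M^2})^2$ — yields $c_4=c_1$ or $c_4=-c_1-16M^3H$; the second option forces $c_4<-8M^3H$, hence $l_4<1$ near $r=2M$, so $f_4$ would not be defined there and could not meet $\Sigma^1$, leaving $c_4=c_1$. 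In case (c), where by Proposition~\ref{prop11}(c) the two branches close up at the turning point $r''$ and $f_4^{*}$ returns to $r=2M$ along the other past horizon (the interface with {\tt I'}), the same computation carried out with the complementary null coordinate forces $c_3=c_1$ for the region-{\tt I'} piece $\Sigma^3$, using \eqref{forumla3} together with the remark that $f_1'=-f_3'$ when $c_1=c_3$.

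For step (ii) I would impose continuity of the non-degenerate null coordinate across each $r=2M$ joint, as in Theorem~\ref{thm1bar}: matching $\lim_{r\to2M^+}U(r)=\lim_{r\to2M^-}U(r)$ at the {\tt I}--{\tt I\!I'} joint — after substituting the integral formulas for $f_1$ and $f_4^{**}$ and cancelling the $(r+2M\ln|r-2M|)$ terms — gives a closed formula for $\bar c_4'$ in terms of $\bar c_1$ of the same shape as the one for $\bar c_2$ in Theorem~\ref{thm1bar}, and in case (c) a further matching of $V$ at the {\tt I\!I'}--{\tt I'} joint determines $\bar c_3$. For step (iii), near each $r=2M$ joint both adjacent pieces carry the same parametrization $U=U(r),\ V=V(r)$, in which one of $U,V$ equals $(r-2M)$ times a smooth nonvanishing factor and the other equals $\exp$ of a smooth function; since $r-2M$ is a smooth function of the Kruskal coordinates near the horizon and the common finite part of the two derivatives extends smoothly across $r=2M$, the union is $C^\infty$ at that joint, and smoothness at the interior turning point $r''$ is exactly the $C^\infty$ proposition following Proposition~\ref{prop11}. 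Finally one counts ends: $\Sigma^1$ supplies the asymptotically null end $r\to\infty$; when $c_1>C_H$ the branch $f_4^{**}$ extends to all of $(0,2M)$, giving a second end at $r\to0$ in {\tt I\!I'}; when $c_1=C_H$ it extends only to $(R_H,2M)$, so the second end is asymptotic to the cylinder $r=R_H$ (with $R_H$ as in Proposition~\ref{proposition11}); and when $-8M^3H<c_1<C_H$ the hypersurface leaves {\tt I\!I'} through $\Sigma^3$ with a second end $r\to\infty$. In every case one obtains a complete, smooth, spacelike spherically symmetric CMC hypersurface with exactly two ends.

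I expect the genuine difficulty to be the geometric bookkeeping inside region {\tt I\!I'}: there $\partial_T$ is future-directed toward \emph{increasing} $r$ (opposite to region {\tt I\!I}), so the choice of the defining function $F$, the orientation of the unit timelike normal, and — most delicately — the decision of which of $f_4^{*},f_4^{**}$ attaches to {\tt I} and which to {\tt I'} all have to be pinned down so that the glued object is a single, coherently time-oriented spacelike hypersurface crossing the turning point $r''$ in the right direction. Once the orientations are fixed, the constant-matching and the smoothness verification are routine reruns of the proofs of Theorems~\ref{thm1} and~\ref{thm1bar}.
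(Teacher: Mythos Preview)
Your proposal is correct and follows exactly the approach the paper intends: the paper gives no separate proof of Theorem~\ref{thm2} beyond the sentence ``By similar argument, we have the following results,'' and what you have written is precisely that similar argument, with the signs, the branch selection ($f_4^{**}$ meeting {\tt I} and $f_4^{*}$ meeting {\tt I'}), and the exclusion of the spurious root $c_4=-c_1-16M^3H$ all handled correctly. Your identification of the orientation bookkeeping in region {\tt I\!I'} as the only genuinely delicate point is also accurate.
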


\begin{rem}The followings are some descriptions for the ends in each case of Theorem \ref{thm2}.
\begin{itemize}
\item In case (a), among the two ends, one is toward the space infinity
$r=\infty$ in the first Schwarzschild exterior,
and the other is toward the space singularity $r=0$ in the second Schwarzschild interior.
\item In case (b), one end is toward the space infinity $r=\infty$ in the first Schwarzschild exterior,
and the other end is asymptotic to a cylindrical \SC hypersurface $r=r_H$ in the second Schwarzschild interior.
\item In case (c), the two ends are toward space infinities $r=\infty$ in
the first and second Schwarzschild exteriors, respectively.
\end{itemize}
\end{rem}

\begin{thm}\label{thm2bar}
Consider $c_1>-8M^3H$ and complete \SC hypersurfaces which are described in Theorem \ref{thm2}.
Take $r_1>2M$ satisfying $r_1+2M\ln|r_1-2M|=0$ and denote $\tilde{f}'(r)=f_1'(r)-\frac1{h(r)}$,
which can be expressed as
\begin{align*}
\tilde{f}'(r)
=\frac{-r^4}{(Hr^3+c_1)^2+r^3(r-2M)+(Hr^3+c_1)\sqrt{(Hr^3+c_1)^2+r^3(r-2M)}},
\end{align*}
Then
\begin{align*}
\bar{c}_4'=\bar{c}_1+\int_{r_4'}^{r_1}\tilde{f}'(r)\mbox{d}r+(r_4'+2M\ln|r_4'-2M|),
\end{align*}
where $r_4'<2M$ is in the domain of $f_4(r)$.
In case {\rm(c)} of Theorem \ref{thm2}, if we take $r_4'=r_4=r''$, $\bar{c}_4'=\bar{c}_4$,
and $r_3=r_1$ in {\rm(\ref{f4positive})} and {\rm(\ref{f4negative})}, then
\begin{align*}
\bar{c}_3
&=\bar{c}_4'+\int_{r''}^{r_3}\tilde{f}'(r)\mbox{d}r+(r''+2M\ln|r''-2M|) \\
&=\bar{c}_1+2\int_{r''}^{r_1}\tilde{f}'(r)\mbox{d}r+2(r''+2M\ln|r''-2M|).
\end{align*}
\end{thm}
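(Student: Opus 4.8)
The plan is to follow the combined proof of Theorems~\ref{thm1} and \ref{thm1bar}, carrying every sign over to the new region sequence {\tt I}$\to${\tt I\!I'}$\to${\tt I'}.

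\textbf{Step 1: the closed form of $\tilde f'$.} Starting from Proposition~\ref{scmc3prop1} I would write $l_1=(Hr^3+c_1)/(r^2\sqrt h)$ and simplify
\[
f_1'(r)=\frac{l_1}{h}\sqrt{\frac1{1+l_1^2}}=\frac{Hr^3+c_1}{h\,\sqrt{(Hr^3+c_1)^2+r^3(r-2M)}},
\]
using $hr^4=r^3(r-2M)$. Subtracting $1/h$, multiplying numerator and denominator by the conjugate of $(Hr^3+c_1)-\sqrt{(Hr^3+c_1)^2+r^3(r-2M)}$, and using once more $\tfrac1h\,r^3(r-2M)=r^4$, gives the stated expression for $\tilde f'(r)$. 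At $r=2M$ its denominator equals $2(8M^3H+c_1)^2>0$ because $c_1>-8M^3H$; more generally, on the interval $(r'',\infty)$ that matters one has $(Hr^3+c_1)^2+r^3(r-2M)>0$ exactly because $\tilde k_H(r)<c_1$ there (Proposition~\ref{proposition11}) and $Hr^3+c_1>0$ on the domain of $f_4$. Hence $\tilde f'$ is a finite smooth function of $r$ on all of $(r'',\infty)$, including across $r=2M$; this is the whole point of peeling off the divergent term $1/h$.

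\textbf{Step 2: constants and branches.} As in the proof of Theorem~\ref{thm1}, for the pieces to meet $C^1$ the finite parts of the mean-curvature derivatives must agree at $r=2M$: the finite part of $f_1'$ is $\tilde f'(2M)=-\bigl(2(2MH+\tfrac{c_1}{4M^2})^2\bigr)^{-1}$, and the analogue for $f_4'$ with parameter $c_4$ is $-\bigl(2(2MH+\tfrac{c_4}{4M^2})^2\bigr)^{-1}$, forcing $c_4=c_1$ or $c_4=-c_1-16M^3H$; since $c_1>-8M^3H$ while $\Sigma^4$ reaching $r=2M$ requires $c_4>-8M^3H$, only $c_4=c_1$ survives, and likewise $c_3=c_1$. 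By Proposition~\ref{asym}(c), $f_1\to-\infty$ as $r\to2M^+$, so $\Sigma^1$ exits {\tt I} through $L_-$; the branch of $\Sigma^4$ limiting to $L_-$ of {\tt I\!I'} ($t\to-\infty$) is $f_4^{**}$, and the branch limiting to $L_+$ of {\tt I\!I'} ($t\to+\infty$), hence continuing into {\tt I'}, is $f_4^*$. Using the formulas of Section~\ref{section4} with $c_4=c_3=c_1$ and the remark $f_3'=-f_1'$, one finds near $r=2M$
\[
f_1'=\tfrac1h+\tilde f',\quad (f_4^{**})'=\tfrac1h+\tilde f',\quad (f_4^{*})'=-\tfrac1h-\tilde f',\quad f_3'=-\tfrac1h-\tilde f',
\]
so, since $\int\tfrac1h\,dr=r+2M\ln|r-2M|$ up to a constant and $r_1+2M\ln|r_1-2M|=0$, integration gives $f_1(r)=(r+2M\ln|r-2M|)+\int_{r_1}^r\tilde f'+\bar c_1$ and the three analogues with $\pm(r+2M\ln|r-2M|)$ and base points $r_4',\ r_4=r'',\ r_3=r_1$.

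\textbf{Step 3: matching in null coordinates.} I would plug these into the null coordinates $(U,V)$ of Section~\ref{Kru} in each region. Along $\Sigma^1$ the quantity $u=f_1(r)-(r+2M\ln|r-2M|)=\int_{r_1}^r\tilde f'+\bar c_1$ stays finite as $r\to2M^+$ while $v\to-\infty$, so on $L_-$ only one of $U,V$ has a finite nonzero limit; writing the same for $\Sigma^4$ along $f_4^{**}$ and equating the surviving limiting coordinate yields the first relation for $\bar c_4'$. In case~(c), taking $r_4'=r_4=r''$ and $\bar c_4'=\bar c_4$ (so that $\Sigma^4$ is the complete and $C^\infty$ interior hypersurface constructed in Section~\ref{section4}) and $r_3=r_1$, and repeating the argument at $L_+$ of {\tt I\!I'} with $\Sigma^4$ along $f_4^*$ and $\Sigma^3$ along $f_3$, gives the relation between $\bar c_3$ and $\bar c_4'$; eliminating $\bar c_4'$ produces the stated formula for $\bar c_3$ in terms of $\bar c_1$. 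Finally the glued surface is genuinely $C^\infty$, not merely continuous: near each joint $U$ and $V$ are each either a smooth function of $r$ or $(r-2M)$ times a smooth function of $r$, exactly as in the proof of Theorem~\ref{thm1}.

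\textbf{Main obstacle.} Conceptually everything is routine once Step~1 is in hand; the real work is sign bookkeeping done simultaneously in {\tt I}, {\tt I\!I'}, and {\tt I'}. One must keep consistent the future direction of $\partial_T$ in each region (hence the correct choice $F=\pm t+f$ and of the unit normal $e_4$), the sign of $h$, which of $f_4^{*},f_4^{**}$ abuts a given $L_\pm$, and the region-dependent signs in the dictionary between $(U,V)$ and $(u,v)$. A slip in any of these flips a sign in $\tilde f'$, in the $\pm(r+2M\ln|r-2M|)$ term, or in the limiting values of $U,V$, and hence in the final constants; so the crux is to carry out this accounting faithfully and to verify, once and for all via the closed form, that every "finite part" invoked along the way is indeed finite and smooth at $r=2M$.
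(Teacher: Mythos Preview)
Your proposal is correct and follows essentially the same approach as the paper. In fact the paper gives no separate proof of this theorem at all: it merely writes ``By similar argument, we have the following results'' before stating Theorems~\ref{thm2} and~\ref{thm2bar}, so your Steps~1--3 are precisely the intended transcription of the proof of Theorems~\ref{thm1} and~\ref{thm1bar} to the region sequence {\tt I}$\to${\tt I\!I'}$\to${\tt I'}, with $\bar f'$ replaced by $\tilde f'$, the interfaces $L_+$ replaced by $L_\mp$, and the roles of the null coordinates $U$ and $V$ swapped accordingly.
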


When $c_1=-8M^3H$, we have
\begin{thm}\label{thm3}
Given constant mean curvature $H$, $c_1=-8M^3H$, and $\bar{c}_1\in\mathbb{R}$,
it determines a \SC hypersurface $\Sigma^1_{H,c_1,\bar{c}_1}$ in region {\tt I}.
This $\Sigma^1_{H,c_1,\bar{c}_{1}}$ connects with a \SC hypersurface $\Sigma^3_{H,c_1,\bar{c}_3}$
for some $\bar{c}_3$ in region {\tt I'} such that $\Sigma^1\cup\Sigma^3$
forms a complete and smooth \SC hypersurface in the Kruskal extension with two ends.
\end{thm}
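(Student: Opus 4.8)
The plan is to glue $\Sigma^1$ in region {\tt I} to a suitable $\Sigma^3$ in region {\tt I'} across the origin of the Kruskal plane (the image of $\{r=2M,\,-\infty<t<\infty\}$), using the signed square-root coordinate $\rho=\pm\sqrt{r-2M}$ as the parameter that makes the two halves fit together analytically. First I would record the behaviour of $\Sigma^1_{H,c_1,\bar c_1}$ as $r\to2M^+$ when $c_1=-8M^3H$: by Proposition~\ref{asym}(b) the function $f_1'$ is of order $(r-2M)^{-1/2}$, and rewriting the formula obtained there gives
\[
f_1'(r)=\frac{\Phi(r)}{\sqrt{r-2M}},\qquad
\Phi(r)=\frac{Hr(r^2+2Mr+4M^2)}{\sqrt{\,r^3+H^2(r-2M)(r^2+2Mr+4M^2)^2\,}},
\]
where $\Phi$ is real-analytic on a neighbourhood of $[2M,\infty)$ (the radicand is positive there) and $\Phi(2M)=6\sqrt2\,HM^{3/2}\neq0$ for $H\neq0$. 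Hence $t_*:=\lim_{r\to2M^+}f_1(r)$ is finite, and the substitution $r=2M+\rho^2$ converts the primitive into an analytic function,
\[
f_1(2M+\rho^2)=t_*+\Psi(\rho),\qquad \Psi(\rho):=2\int_0^{\rho}\Phi(2M+\sigma^2)\,\mathrm{d}\sigma,
\]
with $\Psi$ analytic near $\rho=0$, odd, and $\Psi'(0)=2\Phi(2M)\neq0$. (When $H=0$ we have $c_1=0$ and $\Sigma^1$ is the constant slice $t=\bar c_1$, so $\Phi\equiv0$ and the construction degenerates to the obvious maximal slice through the origin.)

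Next I would choose $c_3=c_1=-8M^3H$. By the remark in Section~\ref{section4} this forces $f_3'=-f_1'$, so $\Sigma^3_{H,c_1,\bar c_3}$ is a \SC hypersurface in region {\tt I'} of the same mean curvature $H$, defined for all $r>2M$, with $f_3'$ again of order $(r-2M)^{-1/2}$; thus $t_{**}:=\lim_{r\to2M^+}f_3(r)$ is finite and depends affinely on $\bar c_3$, and I pick the unique $\bar c_3$ with $t_{**}=t_*$. Parametrizing $\Sigma^3$ by $\rho=-\sqrt{r-2M}<0$ and using $f_3'=-f_1'$ together with the oddness of $\Psi$ yields $f_3(2M+\rho^2)=t_*+\Psi(\rho)$ as well. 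One should also note, in the spirit of Theorems~\ref{thm1} and~\ref{thm2}, that $c_3=c_1$ is \emph{forced}: any other value makes $f_3'$ of order $(r-2M)^{-1}$, so $\Sigma^3$ would run off to one of the half-lines $L_\pm$ instead of reaching the origin.

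Then I would pass to the Kruskal null coordinates $U,V$: on $\Sigma^1$ (region {\tt I}) one has $U=\mathrm{e}^{-u/4M},\,V=\mathrm{e}^{v/4M}$ and on $\Sigma^3$ (region {\tt I'}) one has $U=-\mathrm{e}^{-u/4M},\,V=-\mathrm{e}^{v/4M}$, with $u=t-(r+2M\ln|r-2M|)$ and $v=t+(r+2M\ln|r-2M|)$; the factor $\mathrm{e}^{2M\ln|r-2M|/4M}=|\rho|$ is precisely what absorbs the $(r-2M)^{-1/2}$ blow-up. A direct substitution shows that on $\Sigma^1$ ($\rho>0$) and on $\Sigma^3$ ($\rho<0$) one gets the \emph{same} pair of formulas,
\begin{align*}
U(\rho)&=\rho\,\exp\!\left(\frac{2M+\rho^2-t_*-\Psi(\rho)}{4M}\right),\\
V(\rho)&=\rho\,\exp\!\left(\frac{2M+\rho^2+t_*+\Psi(\rho)}{4M}\right),
\end{align*}
now valid for $\rho$ in a whole neighbourhood $(-\varepsilon,\varepsilon)$ of $0$. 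Since $\Psi$ is analytic and $U'(0)=\mathrm{e}^{(2M-t_*)/4M}\neq0$, $V'(0)=\mathrm{e}^{(2M+t_*)/4M}\neq0$, and the round $S^2$ factor is unaffected throughout, this exhibits $\Sigma^1\cup\Sigma^3$ together with the origin sphere as an analytic, hence $C^\infty$, embedded spacelike hypersurface across $r=2M$; away from $r=2M$ it is smooth and spacelike by Sections~\ref{section2} and~\ref{section4}, and spacelikeness persists up to $r=2M$ by Proposition~\ref{asym}. Its mean curvature equals $H$ on the dense open set $\{r>2M\}$, hence on all of $\Sigma^1\cup\Sigma^3$ by continuity, and each of $\Sigma^1,\Sigma^3$ reaches its end $r\to\infty$ at infinite proper distance (asymptotically null for $H\neq0$), the origin sphere being an interior point; so $\Sigma^1\cup\Sigma^3$ is complete with exactly two ends, one toward $r=\infty$ in region {\tt I} and one toward $r=\infty$ in region {\tt I'}.

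The part I expect to be the real obstacle is the smoothness at the origin sphere, since the naive parameter $r$ fails there ($U,V\sim\sqrt{r-2M}$). Overcoming it rests on the two facts isolated above: that $(r-2M)^{-1/2}\Phi(r)$ integrates, after the change of variable $r=2M+\sigma^2$, to an analytic \emph{odd} function of $\sqrt{r-2M}$, and that the region-{\tt I} and region-{\tt I'} expressions for $(U,V)$ collapse to one and the same analytic curve exactly when $c_3=c_1$ and $t_{**}=t_*$; once these are in hand the rest (CMC and completeness) is routine.
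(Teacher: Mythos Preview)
Your argument is correct and in fact takes a cleaner route than the paper's. The paper proceeds in three stages: it first matches $\left.\dfrac{\mathrm dV}{\mathrm dU}\right|_{U=0}$ from the two sides to determine $\bar c_3$ (hence $C^1$), then computes $\left.\dfrac{\mathrm d^2V}{\mathrm dU^2}\right|_{U=0}$ explicitly in each region and checks equality using $F_1(2M)=-F_3(2M)$ (hence $C^2$), and finally rewrites the \SC equation in the $(T,X)$ chart and invokes elliptic regularity (Gilbarg--Trudinger) to upgrade from $C^2$ to $C^\infty$. Your approach replaces all three steps by a single observation: after the substitution $r=2M+\rho^2$ the integrand $\Phi(2M+\sigma^2)$ is analytic and even, so $\Psi$ is analytic and odd, and the region-{\tt I} and region-{\tt I'} formulas for $(U,V)$ collapse to one analytic curve in the signed parameter $\rho$ with nonvanishing derivative at $\rho=0$. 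This gives real-analyticity across the origin sphere directly, with no need for the explicit second-derivative check or the PDE bootstrap, and your choice of $\bar c_3$ via $t_{**}=t_*$ coincides with the paper's (which amounts to taking $r_1=r_3=2M$, $\bar c_1=\bar c_3$). What the paper's route buys is that it never needs to verify analyticity of $\Phi$; what yours buys is a stronger conclusion (real-analytic, not just $C^\infty$) and no external regularity theorem.
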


\begin{rem}
In Theorem \ref{thm3}, the two ends are toward space infinities $r=\infty$ in
the first and the second Schwarzschild exteriors, respectively.
\end{rem}

\begin{proof}
When $c_1=c_3=-8M^3H$, both $\Sigma^1_{H,c_1,\bar{c}_1}$ and $\Sigma^3_{H,c_3,\bar{c}_3}$
pass through the origin in the Kruskal extension which corresponds to $r=2M$ with finite $t$.
Now we  determine the relation between $\bar{c}_1$ and $\bar{c}_3$.
Since
\begin{align*}
&\left\{
\begin{array}{l}
U(r)=\sqrt{r-2M}\exp\left(\frac1{4M}\left(-\int_{r_1}^rf_1'(x)\mbox{d}x-\bar{c}_1+r\right)\right)\\
V(r)=\sqrt{r-2M}\exp\left(\frac1{4M}\left(\int_{r_1}^rf_1'(x)\mbox{d}x+\bar{c}_1+r\right)\right)
\end{array}\right.\quad\mbox{in region {\tt I}},\\
&\left\{
\begin{array}{l}
U(r)=-\sqrt{r-2M}\exp\left(\frac1{4M}\left(-\int_{r_3}^rf_3'(x)\mbox{d}x-\bar{c}_3+r\right)\right)\\
V(r)=-\sqrt{r-2M}\exp\left(\frac1{4M}\left(\int_{r_3}^rf_3'(x)\mbox{d}x+\bar{c}_3+r\right)\right)
\end{array}\right.\quad\mbox{in region {\tt I'}},
\end{align*}
we have
\begin{align*}
&\left.\frac{\mbox{d}V}{\mbox{d}U}\right|_{U=0}
=\left.\frac{\frac{\mbox{\footnotesize d}V}{\mbox{\footnotesize d}r}}
{\frac{\mbox{\footnotesize d}U}{\mbox{\footnotesize d}r}}\right|_{r=2M}
=\exp\left(\frac{1}{2M}\left(\int_{r_1}^{2M}f_1'(r)\mbox{d}r+\bar{c}_1\right)\right)\quad\mbox{in region {\tt I}}, \\
&\left.\frac{\mbox{d}V}{\mbox{d}U}\right|_{U=0}
=\exp\left(\frac{1}{2M}\left(\int_{r_3}^{2M}f_3'(r)\mbox{d}r+\bar{c}_3\right)\right)\quad\mbox{in region {\tt I'}}.
\end{align*}
The integrals $\int_{r_1}^{2M}f_1'(r)\mbox{d}r$ and $\int_{r_3}^{2M}f_3'(r)\mbox{d}r$
are finite because both $f_1'(r)$ and $f_3'(r)$ are of order $O((r-2M)^{-\frac12})$ when $H\neq 0$
and are $0$ when $H=0$.

The condition to form a $C^1$ complete \SC hypersurface is
$$\bar{c}_3=\bar{c}_1+\int_{r_1}^{2M}f_1'(r)\mbox{d}r-\int_{r_3}^{2M}f_3'(r)\mbox{d}r.$$
We can take $r_1=r_3=2M$, and it gives $\bar{c}_1=\bar{c}_3$.

Now we show smoothness of the \SC hypersurface.
When $H=0$, $\Sigma^1\cup\Sigma^3$ is a straight line through the origin point in the Kruskal extension,
so it is smooth.
When $H\neq 0$, we have $f_1'(r)=h^{-\frac12}(r)F_1(r)$ and $f_3'(r)=h^{-\frac12}(r)F_3(r)$,
where $F_1(r)$ and $F_3(r)$ are smooth functions on $r\geq 2M$ and $F_1(2M)=-F_3(2M)$.
Furthermore, by taking $r_1=r_3=2M$ and $\bar{c}_1=\bar{c}_3$, we have
\begin{align*}
\frac{\mbox{d}V}{\mbox{d}U}=
\left\{\begin{array}{ll}
\exp\left(\frac1{2M}\left(\int_{2M}^rf_1'(x)\mbox{d}x+\bar{c}_1\right)\right)
\left(1+\frac{2h^{\frac12}(r)F_1(r)}{1-h^{\frac12}(r)F_1(r)}\right)&\mbox{in region {\tt I}} \\
\exp\left(\frac1{2M}\left(\int_{2M}^rf_3'(x)\mbox{d}x+\bar{c}_1\right)\right)
\left(1+\frac{2h^{\frac12}(r)F_3(r)}{1-h^{\frac12}(r)F_3(r)}\right)&\mbox{in region {\tt I'}}.
\end{array}\right.
\end{align*}
Because
\begin{align*}
\frac{\mbox{d}^2V}{\mbox{d}U^2}
=\frac{\mbox{d}r}{\mbox{d}U}\left(\frac{\mbox{d}}{\mbox{d}r}\frac{\mbox{d}V}{\mbox{d}U}\right),
\end{align*}
it gives
\begin{align*}
\lim_{r\to 2M}\frac{\mbox{d}^2V}{\mbox{d}U^2}
=\left\{
\begin{array}{ll}
2\sqrt{2M}\frac{F_1(2M)}{M}\exp{\left(\frac1{4M}\left(3\bar{c}_1-2M\right)\right)}
& \mbox{in region {\tt I}} \\
-2\sqrt{2M}\frac{F_3(2M)}{M}\exp{\left(\frac1{4M}\left(3\bar{c}_1-2M\right)\right)}
& \mbox{in region {\tt I'}}.
\end{array}\right.
\end{align*}
Since $F_1(2M)=-F_3(2M)$, we get the \SC hypersurface is $C^2$.

When we rewrite the \SC hypersurface in the coordinates $(T=F(X),X,\theta,\phi)$, the \SC equation becomes
\begin{align*}
F''(X)&+\mbox{e}^{-\frac{r}{2M}}\left(\frac{6M}{r^2}-\frac1r\right)(-F(X)+F'(X)X)(1-(F'(X))^2)\\
&+\frac{12HM\mbox{e}^{-\frac{r}{4M}}}{\sqrt{r}}(1-(F'(X))^2)^{\frac32}=0,
\end{align*}
where the spacelike condition is $1-(F'(X))^2>0$ and $r$ is considered as a function of
$X$ and $T=F(X)$ by (\ref{trans}).
Once we know that the \SC hypersurface is $C^2$, the standard PDE theory (see \cite[Theorem 6.17.]{GT} for example)
implies that the \SC hypersurface is $C^\infty$.
\end{proof}

Figures \ref{CMCPnew} and \ref{CMCNnew} show some complete
\SC hypersurfaces in the Kruskal extension for $H>0$ and $H<0$, respectively.

\begin{figure}
\centering
\psfrag{A}{$r=0$}
\psfrag{B}{$r=r_H$}
\psfrag{K}{$r=R_H$}
\psfrag{C}{$r=2M$}
\psfrag{D}{$c<c_H<-8M^3H$}
\psfrag{E}{$c=c_H<-8M^3H$}
\psfrag{I}{$c_H<c<-8M^3H$}
\psfrag{F}{$c=-8M^3H$}
\psfrag{G}{$-8M^3H<c<C_H$}
\psfrag{J}{$-8M^3H<c=C_H$}
\psfrag{H}{$-8M^3H<C_H<c$}
\includegraphics[height=87mm, width=70mm]{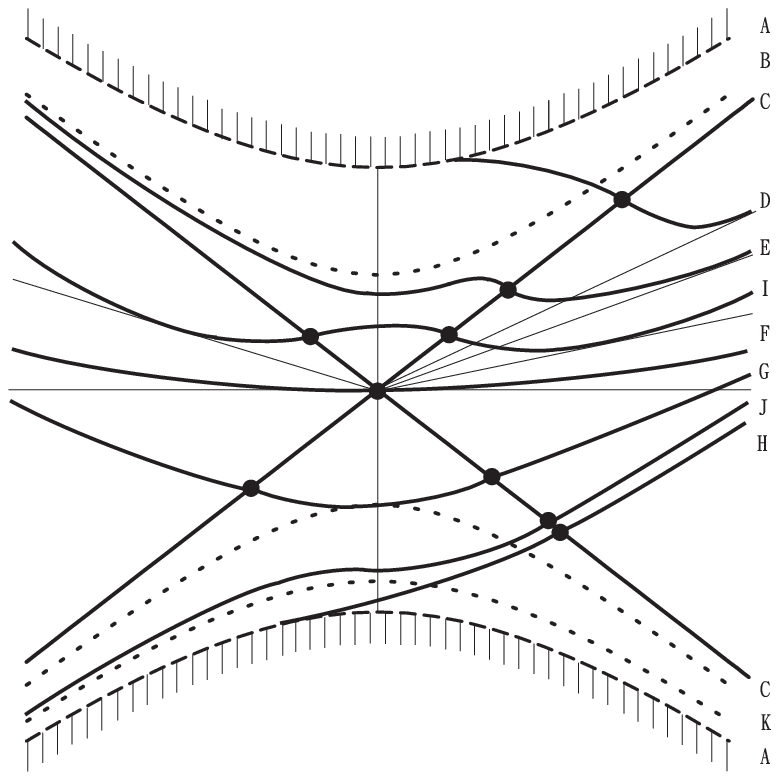}
\caption{\SC hypersufaces with $H>0$.} \label{CMCPnew}
\includegraphics[height=101mm, width=80mm]{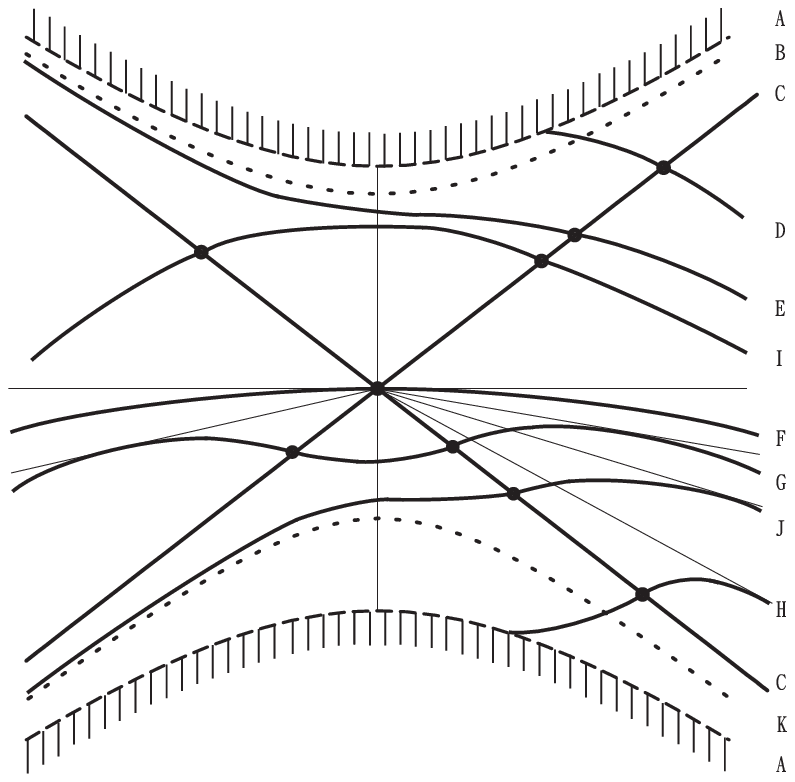}
\caption{\SC hypersufaces with $H<0$.} \label{CMCNnew}
\end{figure}

\begin{rem}\label{rem10}
Besides complete \SC hypersurfaces in Theorems \ref{thm1} and \ref{thm2},
there are different \SC hypersurfaces in the Kruskal extension as below:
\begin{itemize}
\item In case (b) of Proposition \ref{prop8} and $f_2(r)$ is defined on $(0,r_H)$,
the \SC hypersurface is mapped to the region {\tt I\!I} with two ends.
One of them is toward the space singularity $r=0$,
and the other is toward the cylindrical hypersurface $r=r_H$.

\item In case (c) of Proposition \ref{prop8} and $f_2(r)$ is defined on $(0,r']$,
the \SC hypersurface is mapped to the region {\tt I\!I} with two ends.
These two ends are toward the space singularity $r=0$.
This \SC hypersurface is $C^\infty$.

\item In case (b) of Proposition \ref{prop11} and $f_4(r)$ is defined on $(0,R_H)$,
the \SC hypersurface is mapped to the region {\tt I\!I'} with two ends.
One of them is toward the space singularity $r=0$,
and the other is toward the cylindrical hypersurface $r=R_H$.

\item In case (c) of Proposition \ref{prop11} and $f_4(r)$ is defined on $(0,r']$,
the \SC hypersurface is mapped to the region {\tt I\!I'} with two ends.
These two ends are toward the space singularity $r=0$.
This \SC hypersurface is $C^\infty$.

\item We can also start with a \SC hypersurface $\Sigma_{H,c_3,\bar{c}_3}^3$ in region {\tt I'},
and apply similar arguments as Theorems \ref{thm1} and \ref{thm2}.
New complete and smooth \SC hypersurfaces can be found.
They correspond to the case (a), (b) of Theorem \ref{thm1} and \ref{thm2},
which all have one end toward space infinity $r=\infty$ in the second Schwarzschild exterior.
Their another end can be toward space singularity $r=0$ (case (a)) or cylindrical hypersurface (case(b))
in either the first or the second Schwarzschild interior.
\end{itemize}
\end{rem}

\begin{thm}\label{thmall}
All complete \SC hypersurfaces in the Kruskal extension are as in Theorems \ref{thm1}, \ref{thm2}, \ref{thm3},
Remark \ref{rem10}, and cylindrical hypersurfaces.
\end{thm}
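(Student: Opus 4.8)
The plan is to start from an arbitrary connected complete \SC hypersurface $\Sigma$ in the Kruskal extension and to show, region by region, that it must coincide piecewise with one of the solutions already classified, with the behavior at $r=2M$ dictated by completeness and the spacelike condition. First I would fix the (constant) mean curvature $H$ of $\Sigma$. By spherical symmetry, inside each of the four regions {\tt I}, {\tt I\!I}, {\tt I'}, {\tt I\!I'} the hypersurface $\Sigma$ is either a slice $r=r_0$ or a graph of the form treated in Sections~\ref{section2}--\ref{section4}; in the first case Proposition~\ref{cylindrical} forces $r_0\in(0,2M)$ and, since $\nabla F$ is then everywhere tangent to the hyperbolae $r=\mathrm{const}$, $\Sigma$ is exactly a cylindrical hypersurface, which is the last item in the list. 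So from now on assume no cylindrical piece occurs; then on each region $\Sigma$ solves the relevant ODE and hence equals $f_1(\,\cdot\,;H,c_1,\bar c_1)$, $f_2$, $f_3$, or $f_4$ for suitable constants, with $H$ common to all pieces by connectedness.

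Next I would analyze the trace of $\Sigma$ on $\{r=2M\}$. In the $t$--$r$ picture this locus splits into the two null half-lines $L_{\pm}$ (reached as $t\to\pm\infty$) and the origin (reached with finite $t$); by Propositions~\ref{asym} and \ref{asym2} and their analogues in {\tt I'} and {\tt I\!I'}, which of these a given branch approaches is governed by the sign of $c_i+8M^3H$. The completeness of $\Sigma$ forces every branch reaching $r=2M$ to cross it smoothly into an adjacent region rather than terminate, because a spacelike hypersurface accumulating onto $L_{+}$, $L_{-}$, or the origin from one side has finite induced length there and so cannot be boundaryless unless it continues on the other side. The matching-constant computations already carried out in the proofs of Theorems~\ref{thm1}--\ref{thm3} then show that the only admissible continuation is into region {\tt I\!I} when $c<-8M^3H$, into region {\tt I\!I'} when $c>-8M^3H$, and directly into region {\tt I'} through the origin when $c=-8M^3H$, with $H$ and $c$ preserved across the junction.

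Then I would do the bookkeeping over all branches. Following $\Sigma$ outward from any point: in an exterior region it runs off to $r=\infty$ by the asymptotic results of Sections~\ref{section2}--\ref{section4}, giving a space-infinity end; in an interior region it either reaches $r=2M$ (and continues as above), reaches $r=0$, or, in the borderline domain of Proposition~\ref{prop8}(b) or Proposition~\ref{prop11}(b), asymptotes to the cylinder $r=r_H$ or $r=R_H$. Combining these with the junction rules, the connected complete hypersurface through a given exterior branch is exactly one of $\Sigma^1\cup\Sigma^2$ or $\Sigma^1\cup\Sigma^2\cup\Sigma^3$ of Theorem~\ref{thm1}, $\Sigma^1\cup\Sigma^4$ or $\Sigma^1\cup\Sigma^4\cup\Sigma^3$ of Theorem~\ref{thm2}, or $\Sigma^1\cup\Sigma^3$ of Theorem~\ref{thm3} (and the mirror images seeded in {\tt I'}, which give the same list by the isometry $X\mapsto -X$, $T\mapsto -T$). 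If instead $\Sigma$ never enters an exterior region, it lies wholly in {\tt I\!I} or {\tt I\!I'}; a branch there cannot accumulate on $L_{\pm}$ without entering an exterior region, so by Propositions~\ref{prop8} and \ref{prop11} its two ends go to $r=0$ and/or a cylinder, which are precisely the cases in Remark~\ref{rem10}.

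The step I expect to be the main obstacle is the completeness/continuation argument: one must rigorously rule out that $\Sigma$ ``stops'' at $r=2M$ and show it crosses into exactly one neighbouring region, and one must confirm the already-computed junction relations are the only ones---in particular discarding the spurious second root $c_2=-c_1-16M^3H$ of the matching equation (done in the proof of Theorem~\ref{thm1} via the sign constraints $c_1,c_2<-8M^3H$, and similarly elsewhere) and checking that no interior branch can limit onto $L_{\pm}$. Once these are secured, the remaining enumeration is a finite case check against Theorems~\ref{thm1}--\ref{thm3} and Remark~\ref{rem10}.
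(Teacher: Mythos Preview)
The paper states Theorem~\ref{thmall} without proof: it is presented as a summary of the exhaustive case analysis carried out in Sections~\ref{section2}--\ref{section5}, Propositions~\ref{prop8} and~\ref{prop11}, Theorems~\ref{thm1}--\ref{thm3}, and Remark~\ref{rem10}. Your proposal makes that implicit reasoning explicit, and the outline you give---reduce to the ODE solutions region by region, then use the asymptotics at $r=2M$ together with completeness to force the junction rules already computed in Theorems~\ref{thm1}--\ref{thm3}---is exactly the natural argument and matches the structure the paper has set up.

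Two small points to tighten. First, your dichotomy ``either a slice $r=r_0$ or a graph of the form treated'' deserves a sentence of justification: a spherically symmetric spacelike hypersurface is locally a curve in the $(t,r)$ (or $(T,X)$) plane, and if $r$ is not locally constant you can invert to $t=f(r)$, landing in the ODE of Proposition~\ref{scmc3prop1} or~\ref{prop6}; uniqueness for these ODEs then rules out a hypersurface that is cylindrical on an open set and noncylindrical elsewhere. Second, the hypersurfaces seeded from region {\tt I'} are not merely mirror images giving ``the same list''---the paper records them separately in the last bullet of Remark~\ref{rem10} (they are new complete hypersurfaces with one end in the second exterior and the other in an interior), so you should say they are covered by Remark~\ref{rem10} rather than by the $(X,T)\mapsto(-X,-T)$ isometry alone. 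With those adjustments your argument is correct and is precisely what the paper leaves to the reader.
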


\vspace*{5mm}
\fontsize{10}{8pt plus.5pt minus.4pt}\selectfont
\begin{itemize}
\item[] Kuo-Wei Lee
\item[] {\sc Institute of Mathematics, Academia Sinica, Taipei, Taiwan}
\item[] {\it E-mail address}: {\tt d93221007@gmail.com}
\item[]
\item[] Yng-Ing Lee
\item[] {\sc Department of Mathematics, National Taiwan University, Taipei, Taiwan}
\item[] {\sc National Center for Theoretical Sciences, Taipei Office, National Taiwan University, Taipei, Taiwan}
\item[] {\it E-mail address}: {\tt yilee@math.ntu.edu.tw}
\end{itemize}
\end{document}